\numberwithin{equation}{section}
\def\R{\mathbb{R}}
\spnewtheorem{prop}[theorem]{Proposition}{\bf}{\it}
\spnewtheorem{assump}[theorem]{Assumption}{\bf}{\it}
\spnewtheorem{lem}[theorem]{Lemma}{\bf}{\it}
\DeclareMathOperator{\trace}{trace}
\DeclareMathOperator*{\argmin}{arg\,min}
\def\cS{{\mathcal{S}}}
\def\R{{\rm I\!R}}
\title{$\rho$-regularization subproblems: Strong duality and an eigensolver-based algorithm}
\author{Liaoyuan Zeng  \and
	Ting Kei Pong\thanks{Ting Kei Pong was supported in part by an internal funding, G-UAKK, of the Hong Kong Polytechnic University.}
}
\institute{Liaoyuan Zeng \at
	Department of Applied Mathematics, The Hong Kong Polytechnic University, Hong Kong, PRC\\
	\email{lyzeng@polyu.edu.hk}
	\and
	Ting Kei Pong \at
	Department of Applied Mathematics, The Hong Kong Polytechnic University, Hong Kong, PRC\\
	\email{tk.pong@polyu.edu.hk}
}
\date{Received: date / Accepted: date}
\begin{document}
	
\maketitle

\begin{abstract}
Trust-region (TR) type method, based on a quadratic model such as the trust-region subproblem (TRS) and $ p $-regularization subproblem ($p$RS), is arguably one of the most successful methods for
unconstrained minimization. In this paper, we study a general regularized subproblem (named $ \rho $RS), which covers TRS and $p$RS as special cases. We derive a strong duality theorem for $ \rho $RS, and also its necessary and sufficient optimality condition under general assumptions on the regularization term.
%We next classify $ \rho $RS into easy and hard case instances, and demonstrate that the solution of $ \rho $RS can either be obtained from a nonlinear system or be given in an explicit form.
We then define the Rendl-Wolkowicz (RW) dual problem of $ \rho $RS, which is a maximization problem whose objective function is concave, and differentiable except possibly at \emph{two} points. It is worth pointing out that our definition is based on an \emph{alternative derivation} of the RW-dual problem for TRS. Then we propose an eigensolver-based algorithm for solving the RW-dual problem of $ \rho $RS. The algorithm is carried out by finding the smallest eigenvalue and its unit eigenvector of a certain matrix in each iteration. Finally, we present numerical results on randomly generated $p$RS's, and on a new class of regularized problem that combines TRS and $p$RS, to illustrate our algorithm.
\end{abstract}

\section{Introduction}
The $ p $-regularization and trust-region subproblems arise naturally when using trust-region (TR) type methods for minimizing twice continuously differentiable functions. The $ p $-regularization subproblem ($p$RS)
for unconstrained minimization \cite{GoRoTho10, HsiaSheu17} is defined as
\begin{equation}\label{Prob_prs}
  v_{_{p{\rm RS}}}:= \min_x\  2g^Tx + x^THx + \frac{M}p\|x\|^p,
\end{equation}
where $ p>2 $, $g\in \R^n\backslash\{0\}$, $H\in \cS^n$, the space of symmetric
$n\times n$ matrices, and $M > 0$; while the trust-region subproblem (TRS) is given by
\begin{equation}\label{Prob_trs}
  \begin{array}{rl}
    v_{_{\rm TRS}}:=\min\limits_x &  2g^Tx + x^THx\\
    {\rm s.t.}& \|x\|^2 \le s,
  \end{array}
\end{equation}
with $s>0$. In the literature, the most common choice for $p$ in \eqref{Prob_prs} is $ p=3 $, which corresponds to the cubic-regularization subproblem in \cite{MR2229459, MR2776701}. When the above $p$RS or TRS arises from the minimization of a twice continuously differentiable function, the $g$ and $H$ typically correspond to the (nonzero) gradient and
the Hessian approximation, respectively; see, for example,~\cite{Griewank81, ConGouToi:00}.

The above quadratic models are seen to be extremely successful for TR type methods. They are using some specific regularizations for $\|x\|^2$, namely, \eqref{Prob_trs} utilizes $ \delta_{(-\infty,s]}(t) $, the indicator function of the interval
$(-\infty,s]$,
and the $ p $-regularized term $\frac{M}{p}t_+^{\frac{p}2}$ is applied in \eqref{Prob_prs}.
In this paper, we study the following \emph{more general}, possibly {\em higher order},
$\rho$-regularization subproblem ($\rho$RS):
\begin{equation}
\label{Prob_general}
  v_{\rho r}:=\inf_x\  2g^Tx + x^THx + \rho(\|x\|^2),
\end{equation}
where  $g\in \R^n\backslash\{0\}$, $H\in \cS^n$ and
$\rho:\R\to \R_+\cup\{\infty\}$
is a proper closed convex function with $\rho(0) = 0$. In our subsequent developments, we also consider the following three extra assumptions on $\rho$.\footnote{We will state explicitly in each of our results which of these assumptions are used.}
\begin{assump}
\label{assump:one}
$\rho$ is nondecreasing, $\rho(t) = 0$ for all $t\le 0$ and there exists $t_0 > 0$ with $\rho(t_0)<\infty$.
\end{assump}
\begin{assump}
\label{assump:two}
 $\rho$ is supercoercive on the nonnegative side, i.e., $\lim_{t\to\infty}\frac{\rho(t)}t = \infty$.
\end{assump}
\begin{assump}
\label{assump:three}
The monotone conjugate, $\rho^+$, is differentiable for $t > 0$.
\end{assump}
\noindent Here, we recall that $\rho^+(u):= \sup_{t\ge 0}\{ut -
\rho(t)\}$; see~\cite[Page~111]{roc70}. Under Assumption~\ref{assump:two}, it is routine to show that $\rho^+$ is finite everywhere and is hence continuous.
The above assumptions are general enough so that \eqref{Prob_general}
includes \eqref{Prob_prs} and \eqref{Prob_trs} as special cases: indeed, one can see that \eqref{Prob_prs} corresponds to
\eqref{Prob_general} with $\rho(t) := \frac{M}{p}t_+^{\frac{p}2}$, while
\eqref{Prob_trs} corresponds to \eqref{Prob_general} with $\rho(t) =
\delta_{(-\infty,s]}(t)$, and Assumptions~\ref{assump:one}-\ref{assump:three} are all satisfied for these two specific $\rho$'s. More concrete examples of $ \rho $ satisfying Assumptions~\ref{assump:one}-\ref{assump:three} are presented in Section~\ref{subsec:rho}.
These general assumptions on $\rho$ allow us to adopt regularized subproblems with ``piecewise" regularization terms such as the sum of indicator function and $ p $-regularizer. Yet
they are specific enough for retaining key properties shared by \eqref{Prob_prs} and \eqref{Prob_trs} that are crucial for the development of efficient algorithms, as we next discuss.

To develop our algorithm for solving $\rho$RS \eqref{Prob_general}, we recall one common key property exploited in the development of efficient algorithms for solving \eqref{Prob_prs} and \eqref{Prob_trs}. That is, the necessary and sufficient conditions for
\emph{global} optimality of these two problems can be derived irrespective of convexity; see,
for example,~\cite[Section~7.2]{ConGouToi:00}
and~\cite[Theorem~2.2]{HsiaSheu17}, respectively. This fact is exploited
in the classical Mor\'{e} and Sorensen (MS)
algorithm~\cite{MoSo:83} for solving \eqref{Prob_trs}. This algorithm
applies the Newton method with backtracking to find a root of the
so-called \textit{secular function}, a modification of the necessary
and sufficient optimality conditions of \eqref{Prob_trs}. An analogue of
this algorithm for solving \eqref{Prob_prs} with $ p=3 $ (cubic-regularization subproblem) can be found
in~\cite[Section~6]{MR2776701}. The MS algorithm uses Cholesky
factorizations for computing the Newton search directions, and this can
be inefficient for large-scale problems. There are two common ways to reduce computational cost for large-scale instances of \eqref{Prob_prs} and \eqref{Prob_trs}.
\begin{itemize}
  \item One way is to approximate the
original problem by a carefully constructed sequence of low-dimensional
problems, and then apply a variant of the MS algorithm to these
low-dimensional instances. This is the strategy used in the generalized
Lanczos trust-region (GLTR) method~\cite{GoLuRoTo,GoRoTho10} for
\eqref{Prob_trs}. It uses the Lanczos procedure for constructing the low
dimensional problems; see also the sequential subspace method
in~\cite{Hager:00}. Methods analogous to GLTR are developed for
cubic-regularization subproblem (i.e., \eqref{Prob_prs} with $p = 3$) and $p$RS \eqref{Prob_prs} recently in~\cite{CarmonDuchi18} and \cite{GoRoTho10, GoSi20}, respectively.
  \item Another way is to use eigensolvers to leverage sparsity in $H$. In the Rendl and
Wolkowicz (RW) algorithm~\cite{FortinWolk:03,ReWo:94} and the
large-scale trust-region subproblem (LSTRS)~\cite{Sor:99,MR2401375} for
\eqref{Prob_trs}, this is done by reformulating TRS into parameterized
eigenvalue problems so that one only needs to compute the smallest
eigenvalue and a corresponding unit eigenvector for several (typically
sparse) matrices to solve a TRS. Eigensolvers that can exploit sparsity are then applied. See also~\cite{Adachi} and~\cite{Lieder19} respectively for solving TRS \eqref{Prob_trs} and cubic-regularization subproblems (i.e., \eqref{Prob_prs} with $p = 3$) as {\em one single} generalized eigenvalue problem. On passing, we note that there is currently no RW-type algorithm for \eqref{Prob_prs}.
\end{itemize}
In this paper, we take the latter approach and develop an eigensolver-based algorithm that can exploit sparsity in $H$ for solving the more general problem \eqref{Prob_general}, under Assumptions~\ref{assump:one}-\ref{assump:three}.

The rest of the paper is organized as follows. We derive in Section~\ref{sec:LD_opt} a concave
maximization problem that enjoys a strong duality relationship with
$\rho$RS \eqref{Prob_general} under Assumption~\ref{assump:one}. Necessary and sufficient optimality
conditions for $\rho$RS are then derived. In this sense, \eqref{Prob_general} is {\em intrinsically convex} under Assumption~\ref{assump:one}. We next discuss the RW-dual problem for $\rho$RS in Section~\ref{sec:RW_dual}. Recall that this special
dual problem was originally defined for TRS in developing the RW algorithm. The RW
algorithm for TRS performs an {unconstrained} maximization of a concave
function that is differentiable except possibly at one point. Moreover,
in each iteration, the function value and (super)gradient of the concave
function can be computed by finding the smallest eigenvalue and a
corresponding unit eigenvector of a certain matrix. In this paper, based
on a new alternative derivation of the RW-dual problem for TRS, we derive,
under Assumptions~\ref{assump:one} and \ref{assump:two}, the corresponding RW-dual problem for $\rho$RS. This sets the stage for our development of an RW algorithm for $\rho$RS.

In Section~\ref{sec:algorithm}, under
Assumptions~\ref{assump:one}-\ref{assump:three}, we develop an algorithm for
solving $\rho$RS by solving its RW-dual derived and defined in
Section~\ref{sec:RW_dual}. First, analogous to the case of TRS, we discuss the so-called easy case and hard case (1 and 2) for $ \rho $RS \eqref{Prob_general}, and show that an {\em explicit} solution of
\eqref{Prob_general} can be computed for hard case 2.
We then show that the RW-dual is a maximization
problem whose objective is a concave function that is differentiable
except possibly at {\em two} points. Moreover, the function value and
(super)gradient of this concave function can be computed by finding the
smallest eigenvalue and a corresponding unit eigenvector of a certain
matrix. We also demonstrate how a solution of \eqref{Prob_general} can be
recovered when the concave objective function is differentiable at its
maximizer, and show that the nondifferentiability case corresponds to hard case 2, in which an explicit solution can be obtained. In Section~\ref{sec:pRS}, we specialize
our results to $p$RS as well as a new regularized problem that uses {\em both} the trust-region constraint and the $p$-regularizer. We also discuss sufficient
conditions for $\rho$ to satisfy Assumption~\ref{assump:three} in that section. Our numerical tests in Section~\ref{sec4} illustrate the efficiency of our algorithm.

\paragraph{Notation.} We use $ I $ to denote the identity matrix whose size should be clear from the context. For a symmetric matrix $H\in {\cal S}^n$, we let $ H^{\dagger} $ denote its pseudoinverse and $ \lambda_{\min}(H) $ denote its smallest eigenvalue. We also let $ {\rm Range}(H) $ and $ {\rm ker}(H) $ denote its range space and null space, respectively. Moreover, we write $ H\succ 0 $ (resp., $ H\succeq 0 $) if $ \lambda_{\min}(H)>0 $ (resp., $ \lambda_{\min}(H)\geq 0 $). For a proper convex function $f$, we let $f^*$ denote its conjugate and $\partial f(x)$ denote its set of subdifferential at any $x$. For two proper convex functions $f$ and $g$, we let $f\square g$ denote their infimal convolution; we refer the readers to \cite{roc70} for the definition of infimal convolution. Finally, for an $ f:\R \rightarrow \R $, we let $ f^\prime(t) $ denote the derivative and $ f^\prime_+(t) $ denote the right-hand derivative at $ t\in \R$, if exists. %\vspace{-0.5 cm}

\section{Duality theory for $\rho$RS}

\subsection{Lagrange duality and optimality conditions for $\rho$RS}\label{sec:LD_opt}

In this section, under Assumption~\ref{assump:one}, we show that strong duality holds between $\rho$RS \eqref{Prob_general} and a suitably constructed concave dual maximization problem. Based on this, we then derive (necessary and sufficient) optimality conditions for $\rho$RS. The existence of such optimality conditions suggests that $\rho$RS, though appears to be nonconvex, is intrinsically convex.

\begin{theorem}[Strong duality for Lagrange dual]\label{thm0}
  Consider \eqref{Prob_general} and suppose that Assumption~\ref{assump:one} holds. Consider the following concave maximization problem:
  \begin{equation}\label{L_dual_gen}
    \begin{array}{rl}
      v_{\rho d}:= \sup\limits_\lambda & -g^T(H-\lambda I)^\dagger g - \rho^+(-\lambda)\\
      {\rm s.t.}& \lambda \le 0, H - \lambda I \succeq 0, g\in {\rm Range}(H - \lambda I).
    \end{array}
  \end{equation}
  Then $v_{\rho d} = v_{\rho r}$ and a solution of problem \eqref{L_dual_gen} exists when $v_{\rho r}$ is finite.
\end{theorem}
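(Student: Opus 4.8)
The plan is to reduce $\rho$RS to a genuinely convex problem by exploiting the hidden convexity of the trust-region subproblem, and then apply Fenchel duality. The first step uses Assumption~\ref{assump:one}: since $\rho$ is nondecreasing and vanishes on $(-\infty,0]$, we have $\rho(\|x\|^2)=\inf_{t\ge\|x\|^2}\rho(t)$, and interchanging the two infima gives
\begin{equation*}
v_{\rho r}=\inf_{t\ge 0}\bigl[\rho(t)+\phi(t)\bigr],\qquad \phi(t):=\min_{\|x\|^2\le t}\bigl(2g^Tx+x^THx\bigr),
\end{equation*}
the classical TRS value function, with $\phi\equiv+\infty$ on $(-\infty,0)$, $\phi(0)=0$, and $\phi$ finite on $[0,\infty)$ since the feasible ball is compact. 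The interchange is immediate: for each $t\ge 0$ the inner minimum is attained at some $x_t$ with $\|x_t\|^2\le t$, so $\rho(\|x_t\|^2)\le\rho(t)$.

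The core of the proof is the hidden convexity of TRS: for every $t\ge 0$,
\begin{equation*}
\phi(t)=\sup_{\mu\ge 0}\bigl\{-g^T(H+\mu I)^\dagger g-\mu t\;:\; H+\mu I\succeq 0,\ g\in{\rm Range}(H+\mu I)\bigr\},
\end{equation*}
with the supremum attained when $t>0$; this is a standard consequence of the S-lemma (equivalently, classical strong duality for TRS). Introduce $\psi(\mu):=g^T(H+\mu I)^\dagger g$ when $\mu\ge 0$, $H+\mu I\succeq 0$ and $g\in{\rm Range}(H+\mu I)$, and $\psi(\mu):=+\infty$ otherwise. One checks that $\psi$ is proper, convex (in an eigenbasis $\{u_i\}$ of $H$ it is $\sum_i(u_i^Tg)^2/(\lambda_i+\mu)$, a sum of convex functions), and closed ($\psi$ is continuous on its effective domain when the latter is a closed halfline, while $\psi(\mu)\to+\infty$ as $\mu$ decreases to the left endpoint $-\lambda_{\min}(H)$ in the remaining case). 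The displayed identity then reads $\phi(t)=\psi^*(-t)$ for all $t\in\R$ (the case $t\le 0$ is checked directly), so $\phi$ is itself closed convex.

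It remains to dualize the convex program $v_{\rho r}=\inf_t[\rho(t)+\psi^*(-t)]$. The Fenchel qualification holds because, by Assumption~\ref{assump:one}, $(-\infty,t_0]\subseteq{\rm dom}\,\rho$ for some $t_0>0$, whereas ${\rm dom}\,\phi=[0,\infty)$, so the relative interiors of ${\rm dom}\,\rho$ and ${\rm dom}\,\phi$ both contain $(0,t_0)$; this is exactly where the clause ``there exists $t_0>0$ with $\rho(t_0)<\infty$'' of Assumption~\ref{assump:one} enters. Fenchel duality then gives
\begin{equation*}
v_{\rho r}=\max_v\bigl\{-\rho^*(v)-\bigl(\psi^*(-\,\cdot\,)\bigr)^*(-v)\bigr\}=\max_v\bigl\{-\rho^*(v)-\psi(v)\bigr\},
\end{equation*}
using $\bigl(\psi^*(-\,\cdot\,)\bigr)^*(-v)=\psi^{**}(v)=\psi(v)$, the maximum being attained whenever $v_{\rho r}$ is finite. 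Finally, since $\rho\equiv 0$ on $(-\infty,0]$ and is nondecreasing, $\rho^*(v)=\rho^+(v)$ for $v\ge 0$ and $\rho^*(v)=+\infty$ for $v<0$; hence the maximum effectively runs over $v\ge 0$ with $H+vI\succeq 0$ and $g\in{\rm Range}(H+vI)$, and the substitution $\lambda=-v$ turns it into precisely \eqref{L_dual_gen}. This yields $v_{\rho d}=v_{\rho r}$, together with existence of a maximizer of \eqref{L_dual_gen} when $v_{\rho r}$ is finite.

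I expect the main obstacle to be the hidden-convexity step: establishing the identity $\phi=\psi^*(-\,\cdot\,)$ cleanly, in particular dealing with the pseudoinverse and the range condition at the threshold $\mu=-\lambda_{\min}(H)$, with the value at $t=0$ (where the TRS supremum is not attained), and with the properness and closedness of $\psi$. (Alternatively, the hidden convexity can be extracted from the closedness and convexity of the set $\{(s,r):\exists\,x,\ \|x\|^2\le s,\ 2g^Tx+x^THx\le r\}$, which then feeds the same Fenchel argument.) The reformulation via Assumption~\ref{assump:one}, the identification of $\rho^*$ with $\rho^+$, and the verification of the Fenchel qualification are all routine.
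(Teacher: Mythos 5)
Your proposal is correct and takes essentially the same route as the paper: the paper likewise rewrites $v_{\rho r}=\inf_{t\ge 0}\{\rho(t)+d^*(t)\}$ where $d^*$ is the TRS value function expressed (via classical TRS strong duality) as the conjugate of $d(\lambda)=g^T(H-\lambda I)^\dagger g+\delta_D(\lambda)$ — your $\psi$ up to the sign change $\mu=-\lambda$ — and then invokes Fenchel duality with the same domain qualification to obtain \eqref{L_dual_gen} and attainment. The only cosmetic differences are your use of $\rho^*$ in place of the monotone conjugate $(\rho+\delta_{\R_+})^*$ (which you correctly reconcile) and your more explicit verification of the closedness of $\psi$.
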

\begin{proof}
We first recall the following equality for each $t\ge 0$:
\begin{equation}\label{zerogap}
\!\!\!
\begin{aligned}
&\inf_x\{2g^Tx + x^THx:\;\|x\|^2 \le t\} \\
= & \sup_\lambda \{-g^T(H-\lambda I)^\dagger g + t\lambda:\;\lambda \le 0, H - \lambda I \succeq 0, g\in {\rm Range}(H - \lambda I)\};
\end{aligned}
\end{equation}
note that the above equality holds trivially when $t = 0$, while for $t>0$, the equality follows from the strongly duality of TRS \eqref{Prob_trs}. Define $d(\lambda):= g^T(H-\lambda I)^\dagger g + \delta_D(\lambda)$ with
$D := \{\lambda:\;\lambda \le 0, H - \lambda I \succeq 0, g\in {\rm Range}(H - \lambda I)\}$, then one can check that $d$ is proper closed convex and we see from \eqref{zerogap} that for any $t\ge 0$,
\begin{equation}\label{dconjugate}
d^*(t) = \inf_x\{2g^Tx + x^THx:\;\|x\|^2 \le t\} < \infty.
\end{equation}
%In particular, we have $\R_+\subseteq {\rm dom}\, d^*$.

Now, using the assumption that $\rho$ is nondecreasing (see
Assumption~\ref{assump:one}), we can rewrite \eqref{Prob_general} as follows:
\begin{equation}\label{derivation}
  \begin{aligned}
    v_{\rho r} & = \inf_{t\ge 0, x} \{2g^Tx + x^THx + \rho(t):\; \|x\|^2 \le t\}\\
    & = \inf_{t\ge 0} \left\{\rho(t) + \inf_{\|x\|^2 \le t}\{2g^Tx + x^THx\}\right\}\\
    & = \inf_{t\ge 0}\left\{\rho(t) + d^*(t)\right\} = \sup_{\lambda \in D}\left\{-g^T(H-\lambda I)^\dagger g - \rho^+(-\lambda)\right\},
  \end{aligned}
\end{equation}
where $\rho^+(u):= \sup_{t\ge 0}\{ut - \rho(t)\}$ is the monotone
conjugate, the third equality follows from \eqref{dconjugate}, and the
last equality holds as a consequence of the Fenchel-duality
theorem~\cite[Theorem~3.3.5]{BoLe:00} because ${\rm dom}\, d^* \supseteq
\R_+$ (thanks to \eqref{dconjugate}) and ${\rm dom}\,(\rho +
\delta_{\R_+})\cap \R_{++}\neq \emptyset$ by Assumption~\ref{assump:one}. The same theorem guarantees that the supremum in \eqref{derivation} is attained when finite. This completes the proof. \qed
\end{proof}

We refer to \eqref{L_dual_gen} as the Lagrange dual problem of $\rho$RS
\eqref{Prob_general}. As is shown in Theorem~\ref{thm0}, strong duality holds between \eqref{Prob_general} and its Lagrange dual problem \eqref{L_dual_gen} under Assumption~\ref{assump:one}.
Our next theorem builds on this result and gives necessary and sufficient optimality conditions for $\rho$RS, which can be nonconvex in general.
\begin{theorem}[Necessary and sufficient conditions for optimality]\label{thm:opt_cond}
  Consider \eqref{Prob_general} and suppose that Assumption~\ref{assump:one} holds. Then $x^*$ is an optimal solution of $\rho$RS \eqref{Prob_general} if and only if there exists $\lambda^*$ such that
  \begin{equation}\label{optcon:gen}
  \begin{aligned}
    &(H - \lambda^*I)x^* = -g,\\
    &H - \lambda^* I \succeq 0,\  \lambda^*\le 0,\\
    &- \lambda^* \in \partial(\rho + \delta_{\R_+})(\|x^*\|^2).
  \end{aligned}
  \end{equation}
\end{theorem}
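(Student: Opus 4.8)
The plan is to derive these optimality conditions from the strong duality result of Theorem~\ref{thm0} together with the attainment of both the primal infimum and the dual supremum. First I would argue that under Assumption~\ref{assump:one}, if $v_{\rho r}$ is finite then a primal optimal $x^*$ exists: since $\rho$ is nondecreasing with $\rho(t)=0$ for $t\le 0$, on any bounded set the objective is the quadratic plus a finite nonnegative term, and for $\|x\|$ large the term $\rho(\|x\|^2)$ must eventually keep the objective bounded below (otherwise $v_{\rho r}=-\infty$), so a standard coercivity/lower-semicontinuity argument gives a minimizer; the case $v_{\rho r}=-\infty$ needs to be handled separately or excluded, and I expect the theorem is implicitly in the finite case. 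By Theorem~\ref{thm0}, a dual optimal $\lambda^*$ exists with $\lambda^*\le 0$, $H-\lambda^*I\succeq 0$, $g\in\mathrm{Range}(H-\lambda^*I)$, and $v_{\rho r} = -g^T(H-\lambda^*I)^\dagger g - \rho^+(-\lambda^*)$.

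For the ``only if'' direction, I would trace through the chain of equalities in \eqref{derivation}. Writing $t^* := \|x^*\|^2$, optimality of $x^*$ forces equality throughout: $x^*$ must solve the inner TRS $\inf\{2g^Tx + x^THx:\|x\|^2\le t^*\}$, and $t^*$ must attain the outer infimum $\inf_{t\ge 0}\{\rho(t)+d^*(t)\}$. From the latter, $0\in\partial(\rho+\delta_{\R_+})(t^*) + \partial d^*(t^*)$; since $\lambda^*$ attains the Fenchel-dual supremum paired with $t^*$, we get $-\lambda^*\in\partial d^*(t^*)$ and hence the third line of \eqref{optcon:gen}. From $x^*$ solving the inner TRS with multiplier $\lambda^*$ (using that the inner problem is exactly the TRS whose strong duality underlies \eqref{zerogap}, and that $\lambda^*$ attains that inner supremum), the classical TRS optimality conditions give $(H-\lambda^*I)x^* = -g$ and $H-\lambda^*I\succeq 0$, $\lambda^*\le 0$ — the first two lines of \eqref{optcon:gen}. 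The one subtlety is verifying that the $\lambda^*$ attaining the \emph{outer} Fenchel duality is the same multiplier that works for the \emph{inner} TRS at radius $t^*$; this follows because the inner TRS value is $d^*(t^*)$ and $-\lambda^*\in\partial d^*(t^*)$ means $\lambda^*$ achieves $\sup_\lambda\{-d(\lambda) + t^*\lambda\}$, which by \eqref{zerogap} is precisely the statement that $\lambda^*$ is an optimal TRS multiplier at radius $t^*$.

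For the ``if'' direction, I would proceed by direct verification that \eqref{optcon:gen} certifies global optimality — this is the reverse of the above and needs no appeal to attainment. Given $x^*,\lambda^*$ satisfying \eqref{optcon:gen}, set $t^* = \|x^*\|^2$. The conditions $(H-\lambda^*I)x^*=-g$, $H-\lambda^*I\succeq 0$, $\lambda^*\le 0$ are exactly the sufficient conditions for $x^*$ to solve the TRS at radius $t^*$, so $2g^Tx^* + {x^*}^THx^* = d^*(t^*)\le d^*(t)+ \text{(slack)}$... more cleanly: for any $x$, convexity of the quadratic $x\mapsto 2g^Tx + x^T(H-\lambda^*I)x$ at its critical point $x^*$ gives $2g^Tx + x^THx \ge 2g^Tx^* + {x^*}^THx^* + \lambda^*(\|x\|^2 - \|x^*\|^2)$. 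Meanwhile $-\lambda^*\in\partial(\rho+\delta_{\R_+})(t^*)$ gives $\rho(\|x\|^2)\ge \rho(t^*) - \lambda^*(\|x\|^2 - t^*)$ for all $x$ (using $\rho=\rho+\delta_{\R_+}$ on the relevant arguments since $\|x\|^2\ge 0$). Adding these two inequalities, the $\lambda^*$-terms cancel and we obtain $2g^Tx+x^THx+\rho(\|x\|^2)\ge 2g^Tx^*+{x^*}^THx^*+\rho(t^*)$ for all $x$, i.e., $x^*$ is globally optimal.

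The main obstacle I anticipate is the ``only if'' direction's bookkeeping: one must carefully extract, from the single scalar equality $v_{\rho r} = v_{\rho d}$ together with attainment on both sides, the \emph{coupled} pair $(t^*,\lambda^*)$ and show that the same $\lambda^*$ simultaneously serves as the inner-TRS multiplier and the outer-Fenchel multiplier. Invoking the subdifferential sum rule for the outer problem (justified by the constraint qualification $\mathrm{dom}\,d^*\supseteq\R_+$ already used in Theorem~\ref{thm0}) and the known necessary TRS conditions for the inner problem should make this clean, but it is the step where the argument is least mechanical. The ``if'' direction, by contrast, is a short self-contained two-inequality argument and should present no difficulty.
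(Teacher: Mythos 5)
Your proof is correct, and the ``if'' direction is essentially the paper's argument (the paper phrases it through the Lagrangian $\inf_{t\ge 0,x}\{2g^Tx+x^T(H-\lambda^*I)x+\lambda^*t+\rho(t)\}$ and weak duality, but the content is exactly your two added inequalities). The ``only if'' direction, however, is organized differently. The paper works with the \emph{joint} minimax chain \eqref{1st_opt2}: strong duality from Theorem~\ref{thm0} closes the $\inf\sup\ge\sup\inf$ gap, one shows that $(x^*,\|x^*\|^2)$ attains $\inf_{t\ge 0,x}\{2g^Tx+x^T(H-\lambda^*I)x+\lambda^*t+\rho(t)\}$ for a dual-optimal $\lambda^*$, and then reads off all three conditions as the first-order optimality conditions of that single convex problem (with $H-\lambda^*I\succeq 0$ from finiteness of the inner infimum). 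You instead exploit the nested decomposition in \eqref{derivation}: $x^*$ solves the inner TRS at radius $t^*=\|x^*\|^2$, $t^*$ attains the outer scalar problem $\inf_{t\ge 0}\{\rho(t)+d^*(t)\}$, the third condition comes from the Fenchel optimality conditions of the outer problem, and the first two from classical TRS theory at radius $t^*$. Your ``coupling'' worry is resolved exactly as you say ($t^*\in\partial d(\lambda^*)$ identifies $\lambda^*$ as the inner TRS multiplier), so the argument goes through; the paper's joint treatment simply avoids having to make that identification. Two small remarks: you write $-\lambda^*\in\partial d^*(t^*)$ where the correct sign is $\lambda^*\in\partial d^*(t^*)$ (equivalently $t^*\in\partial d(\lambda^*)$) --- the companion relation $-\lambda^*\in\partial(\rho+\delta_{\R_+})(t^*)$, which is what you actually need, is stated correctly. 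Also, your opening paragraph on existence of a primal minimizer is unnecessary (and its coercivity claim is not true under Assumption~\ref{assump:one} alone): the ``only if'' direction starts from a \emph{given} optimal $x^*$, which already forces $v_{\rho r}$ finite since $v_{\rho r}\le 0$ and $\rho\ge 0$, so dual attainment from Theorem~\ref{thm0} is all that is required.
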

\begin{proof}
  We start by noting the following chain of inequalities:
  \begin{equation}\label{1st_opt2}
    \begin{aligned}
      v_{\rho r} & = \inf_{t\ge 0, x} \{2g^Tx + x^THx + \rho(t):\; \|x\|^2 \le t\}\\
      & = \inf_{t\ge 0, x}\sup_{\lambda\le 0}\{2g^Tx + x^T(H - \lambda I)x + \lambda t + \rho(t)\}\\
      & \ge \sup_{\lambda\le 0}\inf_{t\ge 0, x}\{2g^Tx + x^T(H - \lambda I)x + \lambda t + \rho(t)\}\\
      & = \sup_{\lambda\le 0}\left\{\inf_x\{2g^Tx + x^T(H - \lambda I)x\} + \inf_{t\ge 0}\{\lambda t + \rho(t)\}\right\}\\
      & = \sup_{\lambda \in D}\left\{-g^T(H-\lambda I)^\dagger g - \rho^+(-\lambda)\right\} = v_{\rho d},
    \end{aligned}
  \end{equation}
  where $D := \{\lambda:\;\lambda \le 0,\; H - \lambda I \succeq 0,\; g\in
{\rm Range}(H - \lambda I)\}$. In view of Theorem~\ref{thm0}, the inequality in \eqref{1st_opt2} must then hold as an equality. Moreover, the supremum in the third and fourth lines are attained at some $\lambda^*\in D$ when $v_{\rho r}$ is finite.

  Now, suppose that $x^*$ solves \eqref{Prob_general}. From the above discussion, we see that %there exists $\lambda^*\in D$ so that
  \begin{equation}\label{chainineq}
  \begin{aligned}
    v_{\rho r} & = 2g^Tx^* + {x^*}^THx^* + \rho(\|x^*\|^2)\\
    & = \inf_{t\ge 0, x}\{2g^Tx + x^T(H - \lambda^* I)x + \lambda^* t + \rho(t)\}\\
    & \le 2g^Tx^* + {x^*}^T(H - \lambda^* I)x^* + \lambda^* \|x^*\|^2 + \rho(\|x^*\|^2) = v_{\rho r},
  \end{aligned}
  \end{equation}
  where $\lambda^*\in D$ is a maximizer that achieves the supremum in the third line of \eqref{1st_opt2} and the second equality follows from the 3rd relation in \eqref{1st_opt2} (which has been proven to hold as an equality).
  Thus, the infimum in the above display is attained at $(x^*,t^*) = (x^*,\|x^*\|^2)$. We then deduce from the first-order optimality condition that
  \[
  (H - \lambda^* I)x^* = -g,\ -\lambda^*\in \partial(\rho + \delta_{\R_+})(\|x^*\|^2).
  \]
  Moreover, for the infimum in \eqref{chainineq} to be finite when minimizing with respect to $x$, we necessarily have $H - \lambda^* I \succeq 0$.
  This proves \eqref{optcon:gen}.

  Conversely, suppose that $x^*$ is such that there exists $\lambda^*\le 0$ satisfying \eqref{optcon:gen}. Then we have
  \[
  \begin{aligned}
    2g^Tx^* + {x^*}^THx^* + \rho(\|x^*\|^2)
    & \le \inf_{t\ge 0, x}\{2g^Tx + x^T(H - \lambda^* I)x + \lambda^* t + \rho(t)\}\\
    & \le \sup_{\lambda\le 0}\inf_{t\ge 0, x}\{2g^Tx + x^T(H - \lambda I)x + \lambda t + \rho(t)\}\\
    & \le \inf_{t\ge 0, x} \{2g^Tx + x^THx + \rho(t):\; \|x\|^2 \le t\} = v_{\rho r},
  \end{aligned}
  \]
  where: the first inequality holds because the first and third relations in \eqref{optcon:gen} are simply the first-order optimality conditions of the optimization problem $\inf_{t\ge 0, x}\{2g^Tx + x^T(H - \lambda^* I)x + \lambda^* t + \rho(t)\}$, which is convex since $H - \lambda^* I \succeq 0$; the last inequality follows from \eqref{1st_opt2}. This shows that $x^*$ solves \eqref{Prob_general}, as required. This completes the proof. \qed
\end{proof}

\subsection{RW-duality}\label{sec:RW_dual}

In this section, we discuss the RW-dual problem for $\rho$RS \eqref{Prob_general}. Recall that the RW algorithm was originally designed for the following equality constrained TRS:
\begin{equation}\label{Prob_trs=}
  \begin{array}{rl}
    v_{_{{\rm TRS}_=}} := \min\limits_x &  2g^Tx + x^THx\\
    {\rm s.t.}& \|x\|^2 = s,
  \end{array}
\end{equation}
where $H$, $g$ and $s$ are as in \eqref{Prob_trs}.
This algorithm proceeds by solving the RW-dual problem defined below, and then recovering the primal solution:
\begin{equation*}%\label{RW_dual_trs}
    \sup_t\ k(t) - t,
\end{equation*}
where
\[
k(t) := \inf_{y\in\R^{n+1}}\left\{y^T\begin{bmatrix}
  t& g^T\\ g&H
\end{bmatrix}y :\; \|y\|^2 = s + 1\right\}.
\]
It can be shown that the function $k$ is concave and continuously
differentiable, except perhaps at one point. Moreover, the function values and
supergradients of $k$ can be obtained by computing the smallest eigenvalue and
a corresponding eigenvector of an $(n+1)\times (n+1)$ matrix. This can
be done efficiently if $H$ is sparse even when $n$ is large, and is the key to the implementation of efficient algorithms for solving the RW-dual. In what follows, we first provide an alternative derivation for the above RW-dual for \eqref{Prob_trs=}, which reveals its intrinsic connection to the standard Lagrange dual problem of \eqref{Prob_trs=}. We then use a similar procedure to derive (and define) the RW-dual problem for $\rho$RS.

\subsubsection{An alternative derivation of the RW-dual for equality constrained TRS}
\label{sect:altderivRWdual}

We start by recalling the Lagrange dual of TRS \eqref{Prob_trs=}.
For the TRS \eqref{Prob_trs=}, its Lagrange dual~\cite{StWo:93} is given by
\begin{equation}\label{L_dual_trs}
  \begin{array}{rl}
    v_{_{{\rm TRS}_=}} = \sup\limits_{\lambda} & -g^T\left( H -\lambda I\right)^\dagger g + s\lambda\\
    {\rm s.t.}& H -\lambda I\succeq 0, g \in {\rm Range}(H -\lambda I).
  \end{array}
\end{equation}
Using Schur's complement, one can rewrite the above problem as follows:
\begin{equation*}
  \begin{aligned}
    v_{_{{\rm TRS}_=}} & = \sup_{\tilde t, \lambda}\left\{-\tilde t + s\lambda:\; \begin{bmatrix}
      \tilde t& g^T\\g& H -\lambda I
    \end{bmatrix}\succeq 0\right\}\\
    & =\sup_{t, \lambda}\left\{-t + (s+1)\lambda:\; \begin{bmatrix}
      t& g^T\\g& H
    \end{bmatrix}-\lambda I\succeq 0\right\}\\
    & = \sup_t\{k(t) - t\},
  \end{aligned}
\end{equation*}
where we used $t = \tilde t + \lambda$ for the second equality, and set
\[
k(t) := \sup_\lambda\left\{(s+1)\lambda:\;\begin{bmatrix}
  t&g^T\\g&H
\end{bmatrix}\succeq \lambda I\right\} = \inf_y\left\{y^T\begin{bmatrix}
  t&g^T\\g&H
\end{bmatrix}y:\; \|y\|^2 = s+1\right\}.
\]
The problem $\sup_t\{k(t)-t\}$ is precisely the RW-dual for the TRS \eqref{Prob_trs=}. Our derivation above connects this dual directly with the Lagrange dual problem \eqref{L_dual_trs}.

\subsubsection{The RW-dual for $\rho$RS}%\label{sect:RWdualgen}

By mimicking the procedure used in
Section~\ref{sect:altderivRWdual} for deriving the RW-dual of TRS
\eqref{Prob_trs=}, we derive (and define) the RW-dual for $\rho$RS
\eqref{Prob_general}, under Assumptions~\ref{assump:one} and \ref{assump:two}. Specifically, we have the following theorem.

\begin{theorem}[Strong duality for RW-dual]\label{thm:rwdual}
  Consider \eqref{Prob_general} and suppose that Assumptions~\ref{assump:one} and \ref{assump:two} hold. Consider the following concave maximization problem:
  \begin{equation}\label{rw_dual0}
  \ {\rm (RW-dual)}\  \qquad v_{_{{\rm RW}d}} := \sup_t\ \widehat k(t) - t,
  \end{equation}
  where
  \begin{equation}\label{kt10}
  \widehat k(t) = \inf_{\gamma \ge 0}\left\{\rho(\gamma-1) + \gamma \lambda_{\min}\left(\begin{bmatrix}
    t& g^T\\g& H
  \end{bmatrix}\right)\right\}.
  \end{equation}
  Then $v_{_{{\rm RW}d}} = v_{\rho r}$ and a solution of problem \eqref{rw_dual0} exists.
\end{theorem}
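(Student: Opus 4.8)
The plan is to mimic the alternative derivation of the RW-dual for TRS given in Section~\ref{sect:altderivRWdual}, but starting from the Lagrange dual \eqref{L_dual_gen} of $\rho$RS rather than from \eqref{L_dual_trs}. The point of departure is Theorem~\ref{thm0}, which gives $v_{\rho r} = v_{\rho d}$ with a solution of \eqref{L_dual_gen} attained (note $v_{\rho r}$ is finite here: under Assumption~\ref{assump:two}, $\rho$ is supercoercive, so the objective of \eqref{Prob_general} is coercive and the infimum is a finite minimum). So it suffices to show that the RW-dual value $v_{_{{\rm RW}d}}$ equals $v_{\rho d}$ and that its supremum is attained.

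First I would rewrite the constraints of \eqref{L_dual_gen} using the Schur-complement trick: for $\lambda\le 0$, the three conditions $H-\lambda I\succeq 0$, $g\in{\rm Range}(H-\lambda I)$, together with the value $-g^T(H-\lambda I)^\dagger g$, are captured by introducing an auxiliary variable $\tilde t$ with
\[
\begin{bmatrix}\tilde t & g^T\\ g & H-\lambda I\end{bmatrix}\succeq 0,
\]
so that $-g^T(H-\lambda I)^\dagger g = \sup\{-\tilde t:\ \text{the above }\succeq 0\}$. This turns \eqref{L_dual_gen} into $\sup_{\tilde t,\lambda}\{-\tilde t - \rho^+(-\lambda):\ \lambda\le 0,\ \bigl[\begin{smallmatrix}\tilde t & g^T\\ g & H-\lambda I\end{smallmatrix}\bigr]\succeq 0\}$. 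Next, substituting $t=\tilde t+\lambda$ (so that $\bigl[\begin{smallmatrix}\tilde t & g^T\\ g & H-\lambda I\end{smallmatrix}\bigr] = \bigl[\begin{smallmatrix} t & g^T\\ g & H\end{smallmatrix}\bigr] - \lambda I$), the problem becomes
\[
v_{\rho d} = \sup_{t,\lambda}\Bigl\{-t+\lambda - \rho^+(-\lambda):\ \lambda\le 0,\ \begin{bmatrix} t & g^T\\ g & H\end{bmatrix}\succeq \lambda I\Bigr\}
= \sup_t\Bigl\{-t + \sup_{\lambda\le 0}\{\lambda - \rho^+(-\lambda):\ \lambda\le\lambda_{\min}(M_t)\}\Bigr\},
\]
where $M_t := \bigl[\begin{smallmatrix} t & g^T\\ g & H\end{smallmatrix}\bigr]$ and I used that $M_t\succeq\lambda I$ is equivalent to $\lambda\le\lambda_{\min}(M_t)$. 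Writing $\mu=-\lambda\ge 0$, the inner supremum is $-\inf_{\mu\ge 0}\{\mu + \rho^+(\mu):\ \mu\ge -\lambda_{\min}(M_t)\}$; the remaining task is to show this inner quantity equals $\widehat k(t)$ from \eqref{kt10}.

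The key computation is therefore to identify $\widehat k(t)$ with $\sup_{\lambda\le\lambda_{\min}(M_t)}\{\lambda-\rho^+(-\lambda)\}$. Starting from \eqref{kt10}, I would exchange the order: $\widehat k(t) = \inf_{\gamma\ge 0}\{\rho(\gamma-1)+\gamma\lambda_{\min}(M_t)\}$. Since $\rho(\gamma-1)=0$ for $\gamma\le 1$ and $\rho$ is nondecreasing, with a change of variable $s=\gamma-1\ge -1$ (reducible to $s\ge 0$ plus the $\gamma\in[0,1]$ contribution giving $\inf_{\gamma\in[0,1]}\gamma\lambda_{\min}(M_t) = \min\{0,\lambda_{\min}(M_t)\}$), one recognizes $\widehat k(t)$ as (essentially) the value of the infimal convolution / conjugate relation $\lambda_{\min}(M_t) + \inf_{s}\{\rho(s) + (\cdot)\}$, and using the definition $\rho^+(u)=\sup_{s\ge 0}\{us-\rho(s)\}$ one gets by biconjugation (valid since $\rho$ is proper closed convex) that $\inf_{\gamma\ge 0}\{\rho(\gamma-1)+\gamma a\} = \sup_{u}\{ \lambda - \rho^+(-\lambda) : \lambda\le a\}$ with $a=\lambda_{\min}(M_t)$ — this is exactly the needed Fenchel-type identity, and I expect this bookkeeping with the monotone conjugate and the shift by $1$ to be the main obstacle, requiring care with the constant $\rho(0)=0$ and the effective domains. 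Combining this identity with the two displayed reductions gives $v_{_{{\rm RW}d}} = v_{\rho d} = v_{\rho r}$.

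Finally, attainment: the outer supremum over $t$ in \eqref{rw_dual0} is attained because, by Theorem~\ref{thm0}, \eqref{L_dual_gen} has a maximizer $\lambda^*\le 0$; tracking it through the substitutions, $\tilde t^* := -g^T(H-\lambda^* I)^\dagger g$ and $t^* := \tilde t^* + \lambda^*$ is then a maximizer of \eqref{rw_dual0}, with the corresponding inner infimum in $\widehat k(t^*)$ attained at $\gamma^* := \|x^*\|^2 + 1$ where $x^*$ solves \eqref{Prob_general} via Theorem~\ref{thm:opt_cond}. Along the way one should also verify $\widehat k$ is concave (it is an infimum over $\gamma$ of functions affine in $\lambda_{\min}(M_t)$, and $t\mapsto\lambda_{\min}(M_t)$ is concave, with $\gamma\ge 0$) and that $\widehat k$ is finite-valued under Assumption~\ref{assump:two} (so $\widehat k(t)-t$ is a genuine concave function, not identically $-\infty$), which follows from $\rho^+$ being finite everywhere as noted after Assumption~\ref{assump:two}. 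This completes the proof. \qed
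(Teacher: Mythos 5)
Your overall route is sound and, for the key step, genuinely different from the paper's. Both arguments begin identically: apply the Schur complement to \eqref{L_dual_gen}, substitute $t=\tilde t+\lambda$, and reduce the problem to showing that $\widehat h(t):=\sup_{\lambda\le 0}\{\lambda-\rho^+(-\lambda):\, M_t-\lambda I\succeq 0\}$ equals $\widehat k(t)$, where $M_t$ denotes the $(n+1)\times(n+1)$ matrix in \eqref{kt10}; attainment is also handled the same way in both, by pushing a maximizer $\lambda^*$ of \eqref{L_dual_gen} through the substitution. Where you diverge is in proving $\widehat h=\widehat k$: the paper dualizes the semidefinite constraint with a multiplier $U\succeq 0$, invokes Slater's condition and strong duality (this is where Assumption~\ref{assump:two} enters via ${\rm dom}\,\rho^+=\R$), uses the biconjugation identity $\sup_{s\ge 0}\{su-\rho^+(s)\}=\rho(u)$, and finally computes $\inf\{\trace(UM_t):U\succeq 0,\ \trace U=\gamma\}=\gamma\lambda_{\min}(M_t)$. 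You instead collapse the matrix inequality to the scalar condition $\lambda\le\lambda_{\min}(M_t)$ and reduce everything to a one-dimensional conjugacy computation. That is a legitimate and arguably more elementary alternative: it avoids SDP duality entirely.

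The gap is that you leave precisely this one-dimensional identity unproven (``I expect this bookkeeping \dots to be the main obstacle''), and the version you finally write down is false: with $a:=\lambda_{\min}(M_t)$, the claim $\inf_{\gamma\ge 0}\{\rho(\gamma-1)+\gamma a\}=\sup\{\lambda-\rho^+(-\lambda):\lambda\le a\}$ fails whenever $a>0$, since the left side is $0$ (attained at $\gamma=0$, because $\rho\ge 0$ and $\rho(-1)=0$) while the right side equals $a$ (because $\rho^+(-\lambda)=0$ for $\lambda\ge 0$ and the objective is increasing). The constraint $\lambda\le 0$, which you correctly carry in your second display, must not be dropped. The correct statement, which does close the gap, is
\begin{equation*}
\inf_{\gamma\ge 0}\{\rho(\gamma-1)+\gamma a\}\;=\;\sup_{\lambda\le\min\{0,a\}}\{\lambda-\rho^+(-\lambda)\},
\end{equation*}
and it admits a direct proof. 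For the left side, put $\phi(\gamma):=\rho(\gamma-1)+\delta_{\R_+}(\gamma)$, so that the left side equals $-\phi^*(-a)$; substituting $s=\gamma-1$ and splitting the supremum at $s=0$ (using $\rho\equiv 0$ on $\R_-$ from Assumption~\ref{assump:one}) gives $\phi^*(u)=u+\max\{(-u)_+,\rho^+(u)\}$, which equals $0$ for $u\le 0$ and $u+\rho^+(u)$ for $u\ge 0$; hence the left side is $0$ when $a\ge 0$ and $a-\rho^+(-a)$ when $a<0$. For the right side, $\rho^+$ is finite (Assumption~\ref{assump:two}) and nondecreasing, so $\lambda\mapsto\lambda-\rho^+(-\lambda)$ is nondecreasing and the supremum is attained at the endpoint $\lambda=\min\{0,a\}$, yielding the same two values. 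With this lemma supplied, your argument goes through and delivers the theorem.
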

\begin{proof}
 % Since Assumption~\ref{assump:one} holds, we can invoke Theorem~\ref{thm0} and deduce that strong duality holds.
  Apply Schur's complement to rewrite \eqref{L_dual_gen} as follows:
\begin{equation}\label{deftildet}
  \begin{aligned}
    v_{\rho d} & = \sup_{\lambda\le 0, \tilde t}\left\{-\tilde t - \rho^+(-\lambda):\; \begin{bmatrix}
      \tilde t& g^T\\g& H - \lambda I
    \end{bmatrix}\succeq 0\right\}\\
    & =\sup_{\lambda\le 0, t}\left\{-t + \lambda - \rho^+(-\lambda):\; \begin{bmatrix}
      t& g^T\\g& H
    \end{bmatrix}- \lambda I\succeq 0\right\}\\
    & = \sup_t\{ \widehat h(t) - t\},
  \end{aligned}
\end{equation}
where we let $t:= \tilde t + \lambda$ for the second equality, and set
\begin{equation}\label{kt02}
  \widehat h(t) := \sup_{\lambda\le 0}\left\{\lambda - \rho^+(-\lambda):\; \begin{bmatrix}
      t& g^T\\g& H
    \end{bmatrix}- \lambda I\succeq 0\right\}.
\end{equation}
We next show that $\widehat h = \widehat k$. To this end, we note that
\begin{equation}\label{hchain}
\!\!\!\!\!
\begin{aligned}
  \widehat h(t) & =\sup_{\lambda\le 0}\inf_{U\succeq 0}\left\{\lambda - \rho^+(-\lambda) + \trace \left(U \begin{bmatrix}
    t-\lambda& g^T\\g& H-\lambda I
  \end{bmatrix}\right)\right\}\\
  & \overset{\rm (a)}= \inf_{U\succeq 0}\sup_{\lambda\le 0}\left\{\lambda - \rho^+(-\lambda) + \trace \left(U \begin{bmatrix}
    t-\lambda& g^T\\g& H-\lambda I
  \end{bmatrix}\right)\right\}\\
  & = \inf_{U\succeq 0}\left\{\sup_{\lambda\le 0}\left\{-\lambda(\trace (U)-1) - \rho^+(-\lambda)\right\} + \trace \left(U \begin{bmatrix}
    t& g^T\\g& H
  \end{bmatrix}\right)\right\}\\
  & \overset{\rm(b)}= \inf_{U\succeq 0}\left\{\rho(\trace (U)-1) + \trace \left(U \begin{bmatrix}
    t& g^T\\g& H
  \end{bmatrix}\right)\right\},
\end{aligned}
\end{equation}
where: equality (a) follows from strong duality, because the Slater's condition holds for the optimization problem in \eqref{kt02} and ${\rm dom}\,\rho^+ = \R$ thanks to the supercoercivity assumption in Assumption~\ref{assump:two}; equality (b) follows from the following observation: For any $u\in \R$, it holds that
\[
\begin{aligned}
&\sup_{s\ge 0}\{s u - \rho^+(s)\} = (\rho^++ \delta_{\R_+})^*(u) = ([\rho + \delta_{\R_+}]^*+ \delta_{\R_+})^*(u)\\
& = [\rho + \delta_{\R_+}]\square \delta_{\R_-}(u) = \inf_y \{\rho(y):\; y \ge 0, u\le y\} = \rho(u),
\end{aligned}
\]
where the third equality holds thanks to \cite[Theorem~16.4]{roc70} and the finiteness of $\rho^+ = (\rho +
\delta_{\R_+})^*$, and
the last equality holds because $\rho$ is nondecreasing and $\rho(t) =
0$ whenever $t\le 0$ (see Assumption~\ref{assump:one}).
Looking at $U$ with the same trace in the last relation in \eqref{hchain}, we further have
\[
\begin{aligned}
  \widehat h(t)& = \inf_{\gamma \ge 0}\inf_{U\succeq 0, \trace (U)=\gamma}\left\{\rho(\trace (U)-1) + \trace \left(U \begin{bmatrix}
    t& g^T\\g& H
  \end{bmatrix}\right)\right\}\\
  & = \inf_{\gamma \ge 0}\left\{\rho(\gamma-1) +
\inf_{U\succeq 0, \trace (U)=\gamma}\left\{\trace \left(U \begin{bmatrix}
    t& g^T\\g& H
  \end{bmatrix}\right)\right\}\right\}\\
  & = \inf_{\gamma \ge 0}\left\{\rho(\gamma-1) + \gamma \lambda_{\min}\left(\begin{bmatrix}
    t& g^T\\g& H
  \end{bmatrix}\right)\right\}=\widehat k(t).
\end{aligned}
\]

Now, using the fact that $\widehat h = \widehat k$, we deduce from
\eqref{deftildet} that $v_{\rho d} = v_{_{{\rm RW}d}}$. This together with
Theorem~\ref{thm0} shows that $v_{\rho r} = v_{_{{\rm RW}d}}$. Finally, note that $v_{\rho r}$
is finite thanks to the supercoercivity condition in
Assumption~\ref{assump:two}. Thus,
the Lagrange dual problem \eqref{L_dual_gen} is attained at some
$\lambda^*$ according to Theorem~\ref{thm0}. Using this and \eqref{deftildet}, we deduce that the supremum in \eqref{rw_dual0} is attained at $t^* = g^T(H - \lambda^*I)^\dagger g + \lambda^*$. This completes the proof. \qed
\end{proof}

The expression of $\widehat k$ in \eqref{kt10} will be used in latter sections for discussing how function values and (super)gradients of $\widehat k$ can be computed efficiently.
%
%From the above discussions, the optimal value of the RW-dual defined in
%\eqref{rw_dual0} above is also $v_{\rho r}$. Moreover, according to the
%above derivation and Theorem~\ref{thm0}, we see that the supremum in \eqref{rw_dual0} is attained.

\section{An algorithm for $\rho$RS based on its RW-dual}\label{sec:algorithm}
\subsection{Easy and hard cases}
As a preparation for our algorithmic discussion, we discuss the so-called easy-case and hard-case instances for \eqref{Prob_general}. Our definitions parallel the existing definitions of easy-case and hard-case instances in the literature of TRS; see, for example,~\cite{ConGouToi:00,FortinWolk:03}. We start by looking at alternative characterizations of \eqref{optcon:gen}.

Suppose that Assumption~\ref{assump:one} holds.
We note first that the second relation in \eqref{optcon:gen} means that $ \lambda^*\leq \widetilde \lambda:=\min\{0,\lambda_{\min}(H)\} $, and we have the following equivalent reformulation of the third relation in \eqref{optcon:gen}:
\begin{equation}\label{optcond:eq3}
\|x^*\|^2 \in\partial (\rho^+)(-\lambda^*).
\end{equation}
The next proposition further reformulate \eqref{optcond:eq3} equivalently into an equation, under additional assumptions.

\begin{prop}\label{prop:diffcase}
	Consider \eqref{Prob_general} and suppose that
Assumptions~\ref{assump:one}-\ref{assump:three} hold. Let $\lambda^* < \widetilde \lambda$, where $\widetilde \lambda = \min\{0,\lambda_{\min}(H)\}$. Then $ \lambda^* $ and $ x^* $ satisfy \eqref{optcon:gen} if and only if $ \lambda^* $ satisfies
	\begin{equation}\label{eq:nonleq}
		\|(H-\lambda^* I)^{-1}g\|^2 = (\rho^+)'(-\lambda^*),
	\end{equation}
	and $x^*$ is given by
\begin{equation}\label{xsoln}
x^* = - (H-\lambda^*I)^{-1}g.
\end{equation}
%Moreover, it holds that $\lambda^*$ is the unique solution of the equation $\|(H-\lambda I)^{-1}g\|^2 = (\rho^+)^\prime(-\lambda)$ on $(-\infty,\widetilde \lambda)$.
\end{prop}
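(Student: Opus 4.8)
The plan is to start from the equivalent optimality conditions already assembled before the proposition: namely, that $(\lambda^*,x^*)$ satisfies \eqref{optcon:gen} if and only if $(H-\lambda^*I)x^* = -g$, $H-\lambda^*I\succeq 0$, $\lambda^*\le 0$, and \eqref{optcond:eq3} holds, i.e., $\|x^*\|^2\in\partial(\rho^+)(-\lambda^*)$. The key simplification comes from the hypothesis $\lambda^* < \widetilde\lambda = \min\{0,\lambda_{\min}(H)\}$: since $\lambda^* < \lambda_{\min}(H)$ we have $H-\lambda^*I\succ 0$, so $H-\lambda^*I$ is invertible, the semidefiniteness and range conditions are automatic, and the linear equation $(H-\lambda^*I)x^* = -g$ has the unique solution $x^* = -(H-\lambda^*I)^{-1}g$, which is exactly \eqref{xsoln}. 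Also $\lambda^* < \widetilde\lambda \le 0$ takes care of $\lambda^*\le 0$ for free.

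Having pinned down $x^*$, the remaining content is to translate \eqref{optcond:eq3} into the scalar equation \eqref{eq:nonleq}. Substituting $x^* = -(H-\lambda^*I)^{-1}g$ gives $\|x^*\|^2 = \|(H-\lambda^*I)^{-1}g\|^2$, so \eqref{optcond:eq3} reads $\|(H-\lambda^*I)^{-1}g\|^2\in\partial(\rho^+)(-\lambda^*)$. Now I invoke Assumption~\ref{assump:three}: $\rho^+$ is differentiable for $t>0$. Since $-\lambda^* = -\lambda^* > -\widetilde\lambda \ge 0$, in fact $-\lambda^* > 0$ strictly (because $\lambda^* < \widetilde\lambda \le 0$ forces $\lambda^* < 0$), $\rho^+$ is differentiable at $-\lambda^*$, hence $\partial(\rho^+)(-\lambda^*) = \{(\rho^+)'(-\lambda^*)\}$ is a singleton, and the membership \eqref{optcond:eq3} collapses to the equality \eqref{eq:nonleq}. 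This establishes both directions simultaneously, since every step is an equivalence: given \eqref{optcon:gen} with $\lambda^* < \widetilde\lambda$ we derived \eqref{eq:nonleq} and \eqref{xsoln}; conversely, if $\lambda^* < \widetilde\lambda$ satisfies \eqref{eq:nonleq} and $x^*$ is defined by \eqref{xsoln}, then $H-\lambda^*I\succ 0$, $\lambda^* < 0$, $(H-\lambda^*I)x^* = -g$ hold by construction and \eqref{optcond:eq3} holds because $\partial(\rho^+)(-\lambda^*)$ is the singleton $\{(\rho^+)'(-\lambda^*)\} = \{\|x^*\|^2\}$, so \eqref{optcon:gen} holds.

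The one point that needs a short justification rather than being purely mechanical is the equivalence between the third line of \eqref{optcon:gen}, $-\lambda^*\in\partial(\rho+\delta_{\R_+})(\|x^*\|^2)$, and \eqref{optcond:eq3}, $\|x^*\|^2\in\partial(\rho^+)(-\lambda^*)$; this is the standard conjugate-subgradient inversion, using that $\rho^+ = (\rho+\delta_{\R_+})^*$ is the conjugate of the closed convex function $\rho+\delta_{\R_+}$, together with the inclusion $-\lambda^*\le 0 \le -\widetilde\lambda$ ensuring we are on the relevant side. This equivalence is in fact already recorded in the excerpt just before the proposition, so I would simply cite it. Thus the genuinely new work in the proof is minimal: the main obstacle, such as it is, is just being careful that the strict inequality $\lambda^* < \widetilde\lambda$ is used twice — once to get invertibility of $H-\lambda^*I$ (so that $x^*$ is uniquely determined and the PSD/range conditions are free) and once to get $-\lambda^* > 0$ (so that Assumption~\ref{assump:three} applies and the subdifferential is a singleton) — and that the argument is phrased as a chain of equivalences so that both implications of the "if and only if" are obtained at once.
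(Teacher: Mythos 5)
Your proposal is correct and follows essentially the same route as the paper's proof: use $\lambda^*<\widetilde\lambda$ to get $H-\lambda^*I\succ 0$ (so \eqref{xsoln} and the sign/semidefiniteness conditions come for free), and use the conjugate-subgradient inversion together with Assumption~\ref{assump:three} at $-\lambda^*>0$ to collapse the subdifferential condition to the scalar equation \eqref{eq:nonleq}. The only difference is presentational — you run it as a single chain of equivalences where the paper writes out the two implications separately — which changes nothing of substance.
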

\begin{proof}
    We first assume that $ x^* $ and $ \lambda^* $ satisfy \eqref{optcon:gen}. As $ \lambda^*<\widetilde \lambda $, it holds that $H-\lambda^* I \succ 0$ and we deduce from the first relation in \eqref{optcon:gen} that \eqref{xsoln} holds.
	Now, from the third relation in \eqref{optcon:gen}, we see that
	\[
	\|x^*\|^2 \in \partial (\rho + \delta_{\R_+})^*(-\lambda^*) = \{(\rho^+)^\prime(-\lambda^*)\},
	\]
	where the inclusion follows from Young's inequality, and the
equality follows from the definition of monotone conjugate,
Assumption~\ref{assump:three} and the fact that $\lambda^*<\widetilde \lambda \le 0$.
	The above display together with \eqref{xsoln} shows that \eqref{eq:nonleq} holds.
	
    Then we prove the converse implication. Suppose that $ \lambda^* $ satisfies \eqref{eq:nonleq} and $ x^* $ is given by \eqref{xsoln}. Using \eqref{xsoln} and the assumption $ \lambda^*<\widetilde \lambda $, we see immediately that $ (H-\lambda^*I)x^* = -g $, $ H\succeq \lambda^* I$ and $ \lambda^*\leq 0 $. Moreover, the relation $-\lambda^*\in\partial(\rho+\delta_{\R_+})(\|x^*\|^2) $ in \eqref{optcon:gen} is a consequence of \eqref{eq:nonleq}, \eqref{xsoln} and Young's inequality. Thus, we have shown that $ x^* $ and $ \lambda^* $ satisfy \eqref{optcon:gen}. \qed
\end{proof}

Notice from the above proposition that, if Assumptions~\ref{assump:one}-\ref{assump:three} hold and $\lambda^* < \widetilde \lambda$, then the $ \lambda^* $ and $
x^* $ satisfying \eqref{optcon:gen} can be obtained by solving the nonlinear equation \eqref{eq:nonleq}.
We present in the next proposition a checkable {\it sufficient condition} for $\lambda^* < \widetilde \lambda  $. Before proceeding, we first prove an auxiliary lemma concerning the monotone conjugate $ \rho^+ $.
\begin{lem}[Properties of $\rho^+$]\label{auxiliary_lem}
	For a proper closed convex function $\rho$ satisfying
Assumptions~\ref{assump:one}-\ref{assump:three}, the following statements hold.
	\begin{enumerate}[{\rm (i)}]
		\item $\rho^+(u) = 0$ whenever $u < 0$.
		\item For any fixed $M > 0$ and any $s\in (0,M)$, it holds that
		\[
		0\le (\rho^+)'(s) \le (\rho^+)'(M).
		\]
		\item There exists $\hat t > 0$ satisfying $(\rho^+)'(\hat t) > 0$.
	\end{enumerate}
\end{lem}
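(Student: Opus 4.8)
The plan is to establish the three statements in order, using only elementary properties of the monotone conjugate together with Assumptions~\ref{assump:one}--\ref{assump:three}.

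For part (i), I would argue directly from the definition $\rho^+(u) = \sup_{t\ge 0}\{ut - \rho(t)\}$. Since $\rho$ takes values in $\R_+\cup\{\infty\}$ and $\rho(0)=0$, taking $t=0$ shows $\rho^+(u)\ge 0$ for every $u$; conversely, when $u<0$ we have $ut-\rho(t)\le 0$ for all $t\ge 0$ because $ut\le 0$ and $\rho(t)\ge 0$, hence $\rho^+(u)\le 0$. Combining the two inequalities gives $\rho^+(u)=0$ for $u<0$.

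For part (ii), I would first record the structural facts about $\rho^+$ that will be reused. Being a pointwise supremum of affine functions of $u$, $\rho^+$ is convex and lower semicontinuous; it is nondecreasing since $u_1 t - \rho(t)\le u_2 t - \rho(t)$ whenever $u_1\le u_2$ and $t\ge 0$; and by Assumption~\ref{assump:two} it is finite on all of $\R$, hence real-valued and continuous, while Assumption~\ref{assump:three} makes it differentiable on $(0,\infty)$. The monotonicity of $\rho^+$ then forces $(\rho^+)'(s)\ge 0$ for $s>0$, and the convexity of $\rho^+$ forces $(\rho^+)'$ to be nondecreasing on $(0,\infty)$, so $(\rho^+)'(s)\le (\rho^+)'(M)$ for $0<s<M$. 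This gives (ii).

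For part (iii), I would argue by contradiction: suppose $(\rho^+)'(t)=0$ for all $t>0$. Since $\rho^+$ is convex and differentiable on $(0,\infty)$ with vanishing derivative there, it is constant on $(0,\infty)$, say equal to some $c\ge 0$; letting $u\downarrow 0$ and using the continuity of $\rho^+$ together with part (i) (which yields $\rho^+(0)=0$) gives $c=0$, so $\rho^+\equiv 0$ on $[0,\infty)$. On the other hand, Assumption~\ref{assump:one} provides a $t_0>0$ with $\rho(t_0)<\infty$, and then $\rho^+(u)\ge ut_0-\rho(t_0)\to\infty$ as $u\to\infty$, contradicting $\rho^+\equiv 0$ on $[0,\infty)$. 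Hence some $\hat t>0$ satisfies $(\rho^+)'(\hat t)>0$. The argument is essentially routine; the only delicate point is keeping the three assumptions straight in (iii) --- Assumption~\ref{assump:two} for finiteness and continuity of $\rho^+$ (so that the limit as $u\downarrow 0$ is legitimate), Assumption~\ref{assump:three} for differentiability on $(0,\infty)$, and the qualifier ``there exists $t_0>0$ with $\rho(t_0)<\infty$'' in Assumption~\ref{assump:one} to produce the contradiction. If one prefers to bypass the continuity-at-$0$ step, an alternative for (iii) is to invoke the monotone biconjugacy $\rho^{++}=\rho$ (valid because $\rho$ is proper, closed, convex and nondecreasing), so that $\rho^+\equiv 0$ would force $\rho=\delta_{(-\infty,0]}$, again contradicting Assumption~\ref{assump:one}.
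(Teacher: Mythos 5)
Your proof is correct and follows essentially the same route as the paper: (i) directly from the definition, (ii) from convexity (monotone derivative), and (iii) by contradiction assuming $(\rho^+)'\equiv 0$ on $(0,\infty)$. The only deviation is the final step of (iii), where the paper derives the contradiction via biconjugation ($\rho+\delta_{\R_+}=(\rho^+)^*=\delta_{\{0\}}$) while you obtain it more elementarily by noting $\rho^+(u)\ge ut_0-\rho(t_0)\to\infty$ for the $t_0$ from Assumption~\ref{assump:one} --- a legitimate (and arguably simpler) variant, which you also correctly identify as equivalent to the biconjugacy route.
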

\begin{proof}
	Note from Assumption~\ref{assump:one} that $\rho$ is nondecreasing with $\rho(u)=0$ whenever $u\le 0$. Hence, $\rho$ is nonnegative and $\rho(0)=0$. Using this and the definition of monotone conjugate, we can see that item (i) holds.
	Furthermore, we deduce immediately from item (i) that $(\rho^+)'(u) = 0$ whenever $u < 0$. This together with the convexity of $\rho^+$ proves item (ii).
	
	We now prove item (iii). Suppose to the contrary that $(\rho^+)'(t) \equiv 0$ for all $t > 0$. Then it follows that $\rho^+ \equiv 0$ on $(0,\infty)$. Also, we have from item (i) that $\rho^+\equiv 0$ on $(-\infty,0)$. Thus, the convex function $\rho^+$ must be constantly zero. This implies that
	\[
	\rho + \delta_{\R_+}=(\rho + \delta_{\R_+})^{**} = (\rho^+)^* = \delta_{\{0\}},
	\]
	contradicting Assumption~\ref{assump:one}. This completes the proof. \qed
\end{proof}

\begin{prop}\label{prop:infty}
Consider \eqref{Prob_general} and suppose that
Assumptions~\ref{assump:one}-\ref{assump:three} hold. Then the function $\lambda\mapsto \|(H-\lambda I)^{-1}g\|^2 - (\rho^+)^\prime(-\lambda)$
is continuous and strictly increasing on $ (-\infty, \widetilde \lambda) $, where $\widetilde\lambda = \min\{0,\lambda_{\min}(H)\} $, and it holds that
\begin{equation}\label{valueinfinity}
	\limsup_{\lambda\to-\infty}\ \|(H-\lambda I)^{-1}g\|^2 - (\rho^+)'(-\lambda) < 0.
\end{equation}
If we assume in addition that $ \lim_{\lambda\uparrow \widetilde \lambda} \|(H-\lambda I)^{-1}g\|^2 = \infty $, then there exists a unique $ \lambda^*\in(-\infty, \widetilde\lambda) $ such that \eqref{eq:nonleq} holds.
\end{prop}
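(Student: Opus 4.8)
The plan is to analyze the function $\phi(\lambda) := \|(H-\lambda I)^{-1}g\|^2 - (\rho^+)'(-\lambda)$ on $(-\infty,\widetilde\lambda)$ by treating each of its two summands separately. First I would diagonalize $H = Q\,\mathrm{diag}(\mu_1,\dots,\mu_n)\,Q^T$ and write $\|(H-\lambda I)^{-1}g\|^2 = \sum_{i=1}^n \frac{\bar g_i^2}{(\mu_i - \lambda)^2}$, where $\bar g = Q^Tg \neq 0$. For $\lambda < \widetilde\lambda \le \mu_i$ each denominator is positive, so this is a finite sum of continuous functions, hence continuous; moreover, as $\lambda$ increases toward $\widetilde\lambda$, each $\mu_i - \lambda$ decreases, so each term is strictly increasing in $\lambda$, and since $g\neq 0$ at least one $\bar g_i \neq 0$, so $\lambda \mapsto \|(H-\lambda I)^{-1}g\|^2$ is strictly increasing on $(-\infty,\widetilde\lambda)$. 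For the second summand, $-(\rho^+)'(-\lambda)$: since $\lambda \mapsto -\lambda$ is decreasing and $(\rho^+)'$ is nondecreasing (convexity of $\rho^+$, which is finite everywhere by Assumption~\ref{assump:two}), the composition $\lambda\mapsto (\rho^+)'(-\lambda)$ is nonincreasing, so $-(\rho^+)'(-\lambda)$ is nondecreasing; and convex functions have continuous derivatives on the interior of their domain (here all of $\R$), so $\lambda\mapsto(\rho^+)'(-\lambda)$ is continuous. Adding a strictly increasing continuous function to a nondecreasing continuous one yields that $\phi$ is continuous and strictly increasing on $(-\infty,\widetilde\lambda)$.

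Next I would establish \eqref{valueinfinity}. As $\lambda\to-\infty$, each term $\frac{\bar g_i^2}{(\mu_i-\lambda)^2}\to 0$, so $\|(H-\lambda I)^{-1}g\|^2 \to 0$. For the other term I would use Lemma~\ref{auxiliary_lem}(iii): there exists $\hat t > 0$ with $(\rho^+)'(\hat t) > 0$, and by monotonicity of $(\rho^+)'$, we have $(\rho^+)'(-\lambda) \ge (\rho^+)'(\hat t) > 0$ for all $\lambda \le -\hat t$. Hence $\limsup_{\lambda\to-\infty}\phi(\lambda) \le 0 - (\rho^+)'(\hat t) < 0$, which is \eqref{valueinfinity}.

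Finally, for the existence and uniqueness of a root under the extra hypothesis $\lim_{\lambda\uparrow\widetilde\lambda}\|(H-\lambda I)^{-1}g\|^2 = \infty$, I would argue by the intermediate value theorem. By \eqref{valueinfinity} there is some $\lambda_0 < \widetilde\lambda$ with $\phi(\lambda_0) < 0$. For the other endpoint, I need $\phi(\lambda) > 0$ for $\lambda$ close to $\widetilde\lambda$: the first summand blows up to $+\infty$ by hypothesis, so it suffices to show $(\rho^+)'(-\lambda)$ stays bounded as $\lambda\uparrow\widetilde\lambda$. Since $\widetilde\lambda \le 0$, for $\lambda$ in a left-neighborhood of $\widetilde\lambda$ we have $-\lambda$ ranging in a bounded set, say $-\lambda \in [0, |\widetilde\lambda| + 1]$, and by Lemma~\ref{auxiliary_lem}(ii) (with $M = |\widetilde\lambda|+1$) the quantity $(\rho^+)'(-\lambda)$ is bounded above by $(\rho^+)'(|\widetilde\lambda|+1) < \infty$. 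Therefore $\phi(\lambda)\to+\infty$ as $\lambda\uparrow\widetilde\lambda$, so there exists $\lambda_1\in(\lambda_0,\widetilde\lambda)$ with $\phi(\lambda_1) > 0$. Continuity of $\phi$ on $[\lambda_0,\lambda_1]$ gives a root $\lambda^*\in(\lambda_0,\lambda_1)\subset(-\infty,\widetilde\lambda)$, and strict monotonicity of $\phi$ forces uniqueness. In view of Proposition~\ref{prop:diffcase}, this $\lambda^*$ (together with $x^* = -(H-\lambda^*I)^{-1}g$) is exactly what satisfies \eqref{eq:nonleq}.

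The only mildly delicate point is handling the behavior of $(\rho^+)'$ near $\widetilde\lambda$ when $\widetilde\lambda = 0$ — there one must be careful that $(\rho^+)'$ is evaluated at arguments approaching $0$ from the right, where Lemma~\ref{auxiliary_lem}(ii) still applies but one should note $(\rho^+)'$ is controlled by its value at a fixed positive point; I expect this bookkeeping, rather than any conceptual difficulty, to be the main thing to get right.
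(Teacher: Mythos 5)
Your proof is correct and follows essentially the same route as the paper's: the same split of $\phi(\lambda)=\|(H-\lambda I)^{-1}g\|^2-(\rho^+)'(-\lambda)$ into its two summands, the same use of Lemma~\ref{auxiliary_lem}(iii) for the behavior as $\lambda\to-\infty$, and the same intermediate-value plus strict-monotonicity argument, with only minor variations (an eigendecomposition in place of the paper's derivative computation $2g^T(H-\lambda I)^{-3}g>0$, and boundedness of $(\rho^+)'(-\lambda)$ near $\widetilde\lambda$ via Lemma~\ref{auxiliary_lem}(ii) in place of the paper's L'Hospital evaluation of the exact one-sided limit --- both of which suffice). One small caution on phrasing: the continuity of $(\rho^+)'$ on $(0,\infty)$ follows from its differentiability there (Assumption~\ref{assump:three}) combined with convexity, as in \cite[Theorem~25.5]{roc70}, not from convexity alone as your wording suggests.
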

\begin{proof}

We note first that $ H-\lambda I\succ 0 $ whenever $\lambda\in (-\infty, \widetilde \lambda) $, and $ g\neq 0 $. Thus, we have
	\[
	(\|(H-\lambda I)^{-1}g\|^2)^\prime = 2g^T(H-\lambda I)^{-3}g >0,\quad  \forall \lambda \in (-\infty, \widetilde \lambda).
	\]
	This means that $ \lambda\mapsto \|(H-\lambda I)^{-1}g\|^2 $ is strictly increasing on $(-\infty, \widetilde \lambda)$.
	Moreover, according to~\cite[Theorem~25.5]{roc70}, Assumption~\ref{assump:three} and the convexity of $\rho^+$, we have that $(\rho^+)'$ is continuous and nondecreasing on $(0,\infty)$. In view of these, we deduce further that $\lambda \mapsto \|(H-\lambda I)^{-1}g\|^2 - (\rho^+)'(-\lambda)$ is continuous and strictly increasing on $(-\infty,\widetilde\lambda)$.

Next, we see from Lemma~\ref{auxiliary_lem} item (iii) and the convexity of $\rho^+$ that there exists $\hat t > 0$ so that $(\rho^+)'(t)\ge (\rho^+)'(\hat t) > 0$ for all $t > \hat t$. A direct computation then gives
\begin{equation*}
	\limsup_{\lambda\to-\infty}\ \|(H-\lambda I)^{-1}g\|^2 - (\rho^+)'(-\lambda) \le \lim_{\lambda\to-\infty}\ \|(H-\lambda I)^{-1}g\|^2 - (\rho^+)'(\hat t) < 0.
\end{equation*}

Finally, suppose in addition that $ \lim_{\lambda\uparrow \widetilde \lambda}\|(H-\lambda I)^{-1}g\|^2 = \infty $. Since $(\rho^+)'$ is continuous on $(0,\infty)$ and we have $\lim_{t\downarrow 0}(\rho^+)'(t) = (\rho^+)_+^\prime(0)$ thanks to Assumption~\ref{assump:three}, the existence of $\lim_{t\downarrow 0}(\rho^+)'(t)$ (see Lemma~\ref{auxiliary_lem} item (ii)) and the L'Hospital's rule, we deduce that
\[
	\lim_{\lambda\uparrow \widetilde \lambda}\  \|(H - \lambda I)^{-1} g\|^2 - (\rho^+)'(-\lambda)=\lim_{\lambda\uparrow \widetilde \lambda}\  \|(H - \lambda I)^{-1} g\|^2 - (\rho^+)_+'(-\widetilde\lambda)  = \infty,
\]
since $\widetilde\lambda\le 0$.
Combining this with \eqref{valueinfinity} and the fact that $\lambda\mapsto \|(H - \lambda I)^{-1} g\|^2 -  (\rho^+)'(-\lambda) $ is continuous and strictly increasing on $(-\infty,\widetilde\lambda)$, we see that there exists a unique $\lambda^* \in (-\infty,\widetilde\lambda)$ so that
\[
\|(H - \lambda^* I)^{-1} g\|^2 =  (\rho^+)'(-\lambda^*).
\]
This completes the proof. \qed
\end{proof}

We are now ready to present our definition of easy and hard cases for \eqref{Prob_general}.
\begin{definition}[{Easy and hard cases for \eqref{Prob_general}}]\label{def:easy_case}
	Consider \eqref{Prob_general} and suppose that
Assumptions~\ref{assump:one}-\ref{assump:three} hold. We say that the easy case occurs if $ \lim_{\lambda\uparrow \widetilde\lambda} \|(H-\lambda I)^{-1}g\|^2 = \infty$, where $\widetilde \lambda = \min\{0,\lambda_{\min}(H)\}$. Otherwise, we have the hard case.
\end{definition}
Thus, in view of Definition~\ref{def:easy_case}, Propositions~\ref{prop:diffcase} and \ref{prop:infty}, when Assumptions~\ref{assump:one}-\ref{assump:three} hold, we see that easy case instances of \eqref{Prob_general} can be solved by first finding $\lambda^*$ as the unique solution of the equation \eqref{eq:nonleq} and then computing the optimal $x^*$ via \eqref{xsoln}. We next deal with the scenarios when $ \lim_{\lambda\uparrow \widetilde \lambda} \|(H-\lambda I)^{-1}g\|^2 < \infty$, i.e., the hard case instances. We start with the following simple alternative characterization of $ \lim_{\lambda\uparrow \widetilde \lambda} \|(H-\lambda I)^{-1}g\|^2 = \infty$. The result should be well known and we include its proof for completeness.
\begin{lem}\label{hehehaha}
	Consider \eqref{Prob_general} and let $\widetilde \lambda = \min\{0,\lambda_{\min}(H)\}$. Then we have $ \lim_{\lambda\uparrow \widetilde \lambda} \|(H-\lambda I)^{-1}g\|^2 = \infty$ if and only if $ \lambda_{\min}(H)\le 0 $ and $ g\notin  {\rm Range}(H-\widetilde \lambda I)$.
\end{lem}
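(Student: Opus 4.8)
The plan is to reduce the statement to a spectral computation. Diagonalize $H=\sum_{i=1}^{n}\mu_i q_iq_i^{T}$, where $\mu_1\le\mu_2\le\cdots\le\mu_n$ are the eigenvalues of $H$ and $q_1,\dots,q_n$ form an orthonormal eigenbasis. For every $\lambda<\widetilde\lambda$ we have $H-\lambda I\succ 0$, and a direct computation gives $\|(H-\lambda I)^{-1}g\|^{2}=\sum_{i=1}^{n}(q_i^{T}g)^{2}/(\mu_i-\lambda)^{2}$. Since this expression is strictly increasing in $\lambda$ on $(-\infty,\widetilde\lambda)$ (its derivative equals $2g^{T}(H-\lambda I)^{-3}g>0$, as already noted in the proof of Proposition~\ref{prop:infty}), the one-sided limit $\lim_{\lambda\uparrow\widetilde\lambda}\|(H-\lambda I)^{-1}g\|^{2}$ exists in $(0,\infty]$, so it suffices to decide when it equals $+\infty$.

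First I would dispose of the case $\lambda_{\min}(H)>0$. Here $\widetilde\lambda=0$, each denominator $(\mu_i-\lambda)^{2}$ tends to $\mu_i^{2}>0$ as $\lambda\uparrow 0$, so the limit equals $\|H^{-1}g\|^{2}<\infty$; in particular it is not $+\infty$. On the other side, $\lambda_{\min}(H)>0$ makes the condition ``$\lambda_{\min}(H)\le 0$'' fail, so the right-hand statement is false as well, and the asserted equivalence holds (vacuously) in this case.

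Now assume $\lambda_{\min}(H)\le 0$, so that $\widetilde\lambda=\mu_1$. Split the sum according to whether $\mu_i=\mu_1$ or $\mu_i>\mu_1$. As $\lambda\uparrow\mu_1$, the second group stays bounded, since each term there converges to $(q_i^{T}g)^{2}/(\mu_i-\mu_1)^{2}<\infty$; meanwhile each term of the first group equals $(q_i^{T}g)^{2}/(\mu_1-\lambda)^{2}$, which blows up precisely when $q_i^{T}g\neq 0$. Hence $\lim_{\lambda\uparrow\mu_1}\|(H-\lambda I)^{-1}g\|^{2}=\infty$ if and only if $q_i^{T}g\neq 0$ for some $i$ with $\mu_i=\mu_1$. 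Finally, I would identify ${\rm Range}(H-\mu_1 I)={\rm span}\{q_i:\mu_i>\mu_1\}={\rm ker}(H-\mu_1 I)^{\perp}$, whence $g\in{\rm Range}(H-\mu_1 I)$ if and only if $q_i^{T}g=0$ for every $i$ with $\mu_i=\mu_1$. Combining the last two sentences yields exactly the claimed equivalence.

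There is essentially no serious obstacle: the computation is elementary. The only points requiring a little care are allowing $\mu_1$ to be a repeated eigenvalue of $H$ (so the ``first group'' above may involve several $q_i$'s and one must rule out all of them satisfying $q_i^{T}g=0$), and invoking the monotonicity of $\lambda\mapsto\|(H-\lambda I)^{-1}g\|^{2}$ to guarantee that the limit in the statement genuinely exists in $(0,\infty]$ rather than only along a subsequence.
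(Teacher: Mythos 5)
Your proposal is correct and follows essentially the same route as the paper: diagonalize $H$, write $\|(H-\lambda I)^{-1}g\|^{2}=\sum_i (q_i^{T}g)^{2}/(\lambda_i-\lambda)^{2}$, and observe that the limit blows up precisely when $g$ has a nonzero component along an eigenvector for $\lambda_{\min}(H)$ (with the case $\lambda_{\min}(H)>0$ handled separately). The only cosmetic difference is that you argue both directions at once through a single ``if and only if'' spectral computation, while the paper proves sufficiency directly and necessity via the contrapositive; the content is identical.
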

\begin{proof}
	We first prove the sufficiency. Note that $ \widetilde \lambda = \lambda_{\min}(H) $ since $ \lambda_{\min}(H)\le 0 $. Now, let $ H = Q\Lambda Q^T $ be an eigenvalue decomposition of $ H $. Then $ \lambda \mapsto \|(H-\lambda I)^{-1}g\|^2 $ is continuous on $ (-\infty,\lambda_{\min}(H)) $ and
	\[
	\|(H-\lambda I)^{-1}g\|^2 =\sum_{i=1}^{n}\dfrac{(q_i^Tg)^2}{(\lambda_i-\lambda)^2},
	\]
	where $q_i$ is the $ i $-th column of $ Q $, and $ \lambda_i $ is the $ i $-th diagonal element of $ \Lambda $. Since $g\notin {\rm Range}(H-\widetilde \lambda I)$, we must have $q_j^Tg\neq 0$ for some $q_j$ with $ H q_j = \lambda_{\min}(H) q_j $. Consequently, we have
\[
\lim_{\lambda\uparrow \widetilde \lambda} \|(H-\lambda I)^{-1}g\|^2 = \lim_{\lambda\uparrow\lambda_{\min}(H)}\|(H-\lambda I)^{-1}g\|^2 = \infty.
\]
This proves the sufficiency part.
	
We now prove the necessity by proving the contrapositive statement. First, suppose $ \lambda_{\min}(H) > 0 $. It implies that $ \widetilde\lambda = 0 < \lambda_{\min}(H) $. Since $\lambda\mapsto\|(H-\lambda I)^{-1}g\|^2$ is continuous on $ (-\infty,\lambda_{\min}(H)) $, we have $ \lim_{\lambda\uparrow \widetilde \lambda}\|(H-\lambda I)^{-1}g\|^2< \infty $. Next, suppose $ g\in {\rm Range}(H-\widetilde \lambda I) $ and $ \lambda_{\min}(H) \leq 0$. Then $ g\in {\rm Range}(H-\widetilde \lambda I) = {\rm Range}(H - \lambda_{\min}(H) I)$ and hence
  \[
  \lim_{\lambda\uparrow \widetilde \lambda}\|(H-\lambda I)^{-1}g\|^2 = \lim_{\lambda\uparrow \lambda_{\min}(H)}\|(H-\lambda I)^{-1}g\|^2 <\infty.
  \]
	This completes the proof. \qed
\end{proof}

%{\color{red} Explicit solutions for hard cases}
Based on Lemma~\ref{hehehaha}, we can now restate the hard case instances of \eqref{Prob_general} to be instances that either have $H\succ 0$ or $g\in {\rm Range}(H - \widetilde \lambda I)$, where $\widetilde \lambda = \min\{0,\lambda_{\min}(H)\}$. Note that we have $ \widetilde \lambda = 0 $ when $ H\succeq 0 $. It means that we always have $ g\in {\rm Range}(H-\widetilde \lambda I)  $ when $ H\succ 0 $. Thus, the hard case instances can be further characterized by either ``$H\succ 0$", or ``$\lambda_{\min}(H) \le 0$ and $g\in {\rm Range}(H - \lambda_{\min}(H)I)$". We discuss in the next theorem how to deal with these cases.

\begin{theorem}\label{thm:hardcases}
Consider \eqref{Prob_general} and suppose that Assumptions~\ref{assump:one}-\ref{assump:three} hold.
\begin{enumerate}[{\rm (i)}]
	\item Suppose that $H\succ 0$. If $\|H^{-1}g\|^2 \in \argmin \rho$, then $x^* = -H^{-1}g$ solves \eqref{Prob_general}. Otherwise, there exists a unique $\lambda^* < 0$ such that $ \|(H-\lambda^* I)^{-1}g\|^2 = (\rho^+)'(-\lambda^*) $ holds and $ x^* =-(H-\lambda^* I)^{-1}g $ solves \eqref{Prob_general}.
	\item Suppose that $\lambda_{\min}(H) \le 0$ and $g\in {\rm Range}(H - \lambda_{\min}(H)I)$. Then $\lambda \mapsto (H - \lambda I)^\dagger g$ is continuous on $(-\infty,\lambda_{\min}(H)]$.
	
	If $\|(H - \lambda_{\min}(H) I)^\dagger g\| >  \sqrt{(\rho^+)_+'(-\lambda_{\min}(H))}$, then there exists a unique $\lambda^* < \lambda_{\min}(H)$ such that $ \|(H-\lambda^* I)^{-1}g\|^2 = (\rho^+)'(-\lambda^*) $ holds and $ x^* =-(H-\lambda^* I)^{-1}g $ solves \eqref{Prob_general}.
		
	On the other hand, if $\|(H - \lambda_{\min}(H) I)^\dagger g\| \le  \sqrt{(\rho^+)_+'(-\lambda_{\min}(H))}$, then a solution of \eqref{Prob_general} is given by
	\begin{equation}\label{xhardcase}
		x^* = -(H - \lambda_{\min}(H) I)^\dagger g + \alpha^* v^*	
	\end{equation}
	for some $\alpha^* \in \R$ and $v^*\in \ker(H - \lambda_{\min}(H)I )\backslash\{0\}$ satisfying
	\[
	(\rho^+)_+'(-\lambda_{\min}(H)) = \|(H - \lambda_{\min}(H) I)^\dagger g - \alpha^* v^*\|^2.
	\]
\end{enumerate}
\end{theorem}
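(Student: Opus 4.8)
The strategy is, in each case, to produce an explicit pair $(x^*,\lambda^*)$ satisfying the necessary and sufficient optimality conditions \eqref{optcon:gen} of Theorem~\ref{thm:opt_cond}; whenever the multiplier we produce satisfies $\lambda^*<\widetilde\lambda$, Proposition~\ref{prop:diffcase} reduces the task to solving the scalar equation \eqref{eq:nonleq}, which I would settle with an intermediate value argument using the monotonicity and boundary behaviour from Proposition~\ref{prop:infty} together with the auxiliary facts in Lemma~\ref{auxiliary_lem}; and when the multiplier equals $\widetilde\lambda$ I would verify \eqref{optcon:gen} directly. Throughout I will use the function $\phi(\lambda):=\|(H-\lambda I)^{-1}g\|^2-(\rho^+)'(-\lambda)$, which by Proposition~\ref{prop:infty} is continuous and strictly increasing on $(-\infty,\widetilde\lambda)$ with $\limsup_{\lambda\to-\infty}\phi(\lambda)<0$; the only genuinely new work is to identify the exact thresholds at which the construction switches between an interior $\lambda^*$ and the boundary value $\lambda^*=\widetilde\lambda$.

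For part (i), $\widetilde\lambda=0$. I would first record the elementary fact that, for any $s\ge0$, one has $s\in\argmin\rho$ if and only if $s\le(\rho^+)'_+(0)$: indeed, for $s\ge0$, $s\in\argmin\rho\iff\rho(s)=0\iff s\in\argmin(\rho+\delta_{\R_+})\iff 0\in\partial(\rho+\delta_{\R_+})(s)\iff s\in\partial\rho^+(0)$ by the inverse-subdifferential relation for the closed proper convex pair $(\rho+\delta_{\R_+},\rho^+)$, and $\partial\rho^+(0)=[0,(\rho^+)'_+(0)]$, the lower endpoint being $0$ by Lemma~\ref{auxiliary_lem}(i) and convexity of $\rho^+$. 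Now if $\|H^{-1}g\|^2\in\argmin\rho$, this gives $0\in\partial(\rho+\delta_{\R_+})(\|H^{-1}g\|^2)$, so $(x^*,\lambda^*)=(-H^{-1}g,0)$ satisfies all three relations of \eqref{optcon:gen}, and $x^*$ solves \eqref{Prob_general} by Theorem~\ref{thm:opt_cond}. Otherwise $\|H^{-1}g\|^2>(\rho^+)'_+(0)$, and since $H\succ0$ the map $\lambda\mapsto\|(H-\lambda I)^{-1}g\|^2$ is continuous at $0$ while $(\rho^+)'(-\lambda)\to(\rho^+)'_+(0)$ as $\lambda\uparrow0$ (Lemma~\ref{auxiliary_lem}(ii)), so $\lim_{\lambda\uparrow0}\phi(\lambda)=\|H^{-1}g\|^2-(\rho^+)'_+(0)>0$; combined with $\limsup_{\lambda\to-\infty}\phi<0$ and the strict monotonicity/continuity of $\phi$ on $(-\infty,0)$, the intermediate value theorem yields a unique $\lambda^*\in(-\infty,0)=(-\infty,\widetilde\lambda)$ with $\phi(\lambda^*)=0$, i.e. \eqref{eq:nonleq}, and Proposition~\ref{prop:diffcase} delivers the claimed $x^*=-(H-\lambda^*I)^{-1}g$.

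For part (ii), $\widetilde\lambda=\lambda_{\min}(H)\le0$. The continuity of $\lambda\mapsto(H-\lambda I)^\dagger g$ on $(-\infty,\lambda_{\min}(H)]$ I would get from an eigendecomposition $H=Q\Lambda Q^T$: since $g\in{\rm Range}(H-\lambda_{\min}(H)I)$, one has $q_i^Tg=0$ for every column $q_i$ with $\lambda_i=\lambda_{\min}(H)$, hence $(H-\lambda I)^\dagger g=\sum_{\lambda_i>\lambda_{\min}(H)}\tfrac{q_i^Tg}{\lambda_i-\lambda}q_i$ for all $\lambda\le\lambda_{\min}(H)$, a finite sum of functions smooth on $(-\infty,\lambda_{\min}(H)]$ because the denominators stay $\ge\lambda_i-\lambda_{\min}(H)>0$; the same formula shows $\lambda\mapsto\|(H-\lambda I)^\dagger g\|^2$ is continuous and strictly increasing there with left limit $\|(H-\lambda_{\min}(H)I)^\dagger g\|^2$ at $\lambda_{\min}(H)$. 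If $\|(H-\lambda_{\min}(H)I)^\dagger g\|>\sqrt{(\rho^+)_+'(-\lambda_{\min}(H))}$, then $\lim_{\lambda\uparrow\lambda_{\min}(H)}\phi(\lambda)=\|(H-\lambda_{\min}(H)I)^\dagger g\|^2-(\rho^+)_+'(-\lambda_{\min}(H))>0$ (using the left limit above and $(\rho^+)'(-\lambda)\to(\rho^+)_+'(-\lambda_{\min}(H))$), and as in part (i) the intermediate value theorem on $(-\infty,\lambda_{\min}(H))$ produces a unique $\lambda^*<\lambda_{\min}(H)=\widetilde\lambda$ solving \eqref{eq:nonleq}, whence Proposition~\ref{prop:diffcase} gives $x^*=-(H-\lambda^*I)^{-1}g$. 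If instead $\|(H-\lambda_{\min}(H)I)^\dagger g\|\le\sqrt{(\rho^+)_+'(-\lambda_{\min}(H))}$, I would take $\lambda^*=\lambda_{\min}(H)$ and $x^*=-(H-\lambda_{\min}(H)I)^\dagger g+\alpha^*v^*$ with $v^*\in\ker(H-\lambda_{\min}(H)I)\backslash\{0\}$ (nonempty, as $\lambda_{\min}(H)$ is an eigenvalue of $H$); since $(H-\lambda_{\min}(H)I)^\dagger g\perp\ker(H-\lambda_{\min}(H)I)$, one has $\|x^*\|^2=\|(H-\lambda_{\min}(H)I)^\dagger g\|^2+|\alpha^*|^2\|v^*\|^2$, so the standing hypothesis lets me pick $\alpha^*\in\R$ with $\|x^*\|^2=(\rho^+)_+'(-\lambda_{\min}(H))$, which is exactly the required relation $(\rho^+)_+'(-\lambda_{\min}(H))=\|(H-\lambda_{\min}(H)I)^\dagger g-\alpha^*v^*\|^2$. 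Then $(H-\lambda_{\min}(H)I)x^*=-g$ (using $g\in{\rm Range}(H-\lambda_{\min}(H)I)$), $H-\lambda_{\min}(H)I\succeq0$, $\lambda_{\min}(H)\le0$, and $\|x^*\|^2=(\rho^+)_+'(-\lambda_{\min}(H))\in\partial\rho^+(-\lambda_{\min}(H))$, which by \eqref{optcond:eq3} is the third relation of \eqref{optcon:gen}; hence $x^*$ solves \eqref{Prob_general} by Theorem~\ref{thm:opt_cond}.

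The main obstacle, as I see it, is the boundary bookkeeping rather than anything structural: matching ``$\|H^{-1}g\|^2\in\argmin\rho$'' with ``$\|H^{-1}g\|^2\le(\rho^+)_+'(0)$'' through the subdifferential calculus of the pair $(\rho+\delta_{\R_+},\rho^+)$ (which is where Lemma~\ref{auxiliary_lem}(i) and Assumption~\ref{assump:three} are used), interpreting the one-sided derivative $(\rho^+)_+'(-\lambda_{\min}(H))$ consistently in the cases $\lambda_{\min}(H)=0$ and $\lambda_{\min}(H)<0$, and establishing continuity of $(H-\lambda I)^\dagger g$ up to $\lambda_{\min}(H)$ (which relies precisely on $g\in{\rm Range}(H-\lambda_{\min}(H)I)$ and would fail otherwise). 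Once these one-sided limits are pinned down, each case reduces to a single intermediate value argument followed by an application of Proposition~\ref{prop:diffcase}, or to a direct verification of \eqref{optcon:gen}.
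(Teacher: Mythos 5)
Your proposal is correct and follows essentially the same route as the paper's proof: verify the optimality conditions \eqref{optcon:gen} directly (via Theorem~\ref{thm:opt_cond} and Proposition~\ref{prop:diffcase}), use the monotonicity and boundary limits from Proposition~\ref{prop:infty} together with an intermediate value argument for the interior-multiplier cases, and construct the explicit kernel-shifted solution for the remaining case. The only differences are cosmetic: you make the equivalence between $\|H^{-1}g\|^2\in\argmin\rho$ and $\|H^{-1}g\|^2\le(\rho^+)'_+(0)$ fully explicit via the conjugate subdifferential inversion (the paper only uses the implication it needs), and you spell out the continuity of $\lambda\mapsto(H-\lambda I)^\dagger g$ through an eigendecomposition where the paper treats it as immediate.
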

\begin{proof}
We first prove item (i).
Since $ H\succ 0 $ and $g\neq 0$, we have $ \|H^{-1}g\|^2>0 $. Thus, if $ \|H^{-1}g\|^2\in\argmin\rho $ holds, then
\begin{equation}\label{aaa}
 0\in\partial (\rho+\delta_{\R_+})(\|H^{-1}g\|^2).
\end{equation}
Hence, we see that \eqref{optcon:gen} is satisfied with $ \lambda^*=0 $ and $ x^*=-H^{-1}g $, showing that $x^*$ solves \eqref{Prob_general}.
	
On the other hand, suppose that $ \|H^{-1}g\|^2\notin\argmin\rho $. Note
that this is the same as $ 0\notin\partial
(\rho+\delta_{\R_+})(\|H^{-1}g\|^2) $, which is further equivalent to $
\|H^{-1}g\|^2 \notin \partial \rho^+(0)$ by Young's inequality. Since
$\partial \rho^+(0) = [(\rho^+)'_-(0),(\rho^+)'_+(0)]$ thanks
to~\cite[Page~229, Line~5]{roc70} and $(\rho^+)'_-(0)=0$ (which follows
immediately from Lemma~\ref{auxiliary_lem} item (i)), we see that $ \|H^{-1}g\|^2 \notin \partial \rho^+(0)$ implies that
\begin{equation}\label{valueat0}
	\|H^{-1}g\|^2 > (\rho^+)_+^\prime(0).
\end{equation}
Moreover, we have $\lim_{t\downarrow 0}(\rho^+)'(t) = (\rho^+)_+^\prime(0)$ thanks to Assumption~\ref{assump:three}, the existence of $\lim_{t\downarrow 0}(\rho^+)'(t)$ (see Lemma~\ref{auxiliary_lem} item (ii)) and the L'Hospital's rule. Using these together with \eqref{valueinfinity} and \eqref{valueat0}, and applying the intermediate value theorem, we see that there exists a $ \lambda^*\in(-\infty, 0) $ such that
\[
	\|(H - \lambda^* I)^{-1} g\|^2 =  (\rho^+)^\prime(-\lambda^*).
\]
The uniqueness of $\lambda^*$ follows from Proposition~\ref{prop:infty}, which asserts that the function $\lambda\mapsto \|(H-\lambda I)^{-1}g\|^2 - (\rho^+)^\prime(-\lambda)$
is continuous and strictly increasing on $ (-\infty, \widetilde \lambda) $.
Then with the $ \lambda^* $ above, we can get the desired $ x^* $
according to Proposition~\ref{prop:diffcase}.
	
We now prove item (ii). The continuity of $\lambda \mapsto (H - \lambda I)^\dagger g$ on $(-\infty,\lambda_{\min}(H)]$ follows immediately from $g\in {\rm Range}(H - \lambda_{\min}(H)I)$.
Thus, if it holds that $\|(H - \lambda_{\min}(H) I)^\dagger g\|^2 >  (\rho^+)'_+(-\lambda_{\min}(H))$, then we can deduce using \eqref{valueinfinity} that there exists a $\lambda^* \in (-\infty,\lambda_{\min}(H))$ so that
\[
\|(H - \lambda^* I)^{-1} g\|^2 =  (\rho^+)'_+(-\lambda^*).
\]
The uniqueness of $\lambda^*$ again follows from Proposition~\ref{prop:infty}, and the desired conclusion concerning $x^*$ follows from Proposition~\ref{prop:diffcase}.

Finally, suppose that $\|(H - \lambda_{\min}(H) I)^\dagger g\|^2 \le  (\rho^+)'_+(-\lambda_{\min}(H))$ and pick any $v^*\in \ker(H - \lambda_{\min}(H) I)\backslash\{0\}$. Then the function
\[
w(\alpha) := \|(H - \lambda_{\min}(H) I)^\dagger g - \alpha v^*\|^2
\]
is continuous, with $w(0) \le (\rho^+)'_+(-\lambda_{\min}(H))$ and
$\lim_{\alpha\to \infty}w(\alpha) = \infty$. Thus, there exists
$\alpha^*$ so that $w(\alpha^*) = (\rho^+)'_+(-\lambda_{\min}(H))$. Let $x^*:= \alpha^*
v^*-(H - \lambda_{\min}(H) I)^\dagger g$. Then $ x^* $ satisfies $\left(H -\lambda_{\min}(H)I\right)x^* = -g$ and
$-\lambda_{\min}(H)\in \partial(\rho+\delta_{\R_+})(\|x^*\|^2)$. Thus, $x^*$ solves \eqref{Prob_general} according to Theorem~\ref{thm:opt_cond}. This completes the proof. \qed
\end{proof}

To conclude this subsection, we summarize the easy and hard cases for $ \rho $RS \eqref{Prob_general} in Table \ref{Table:easyhard}. For the hard cases where $g \in {\rm Range}(H-\widetilde \lambda I)$:
The hard case 1 in Table \ref{Table:easyhard} corresponds to the cases ``$ \lambda_{\min}(H)\le 0 $ and $\|(H-\lambda_{\min}(H) I)^\dagger g\|^2 > (\rho^+)'_+(-\lambda_{\min}(H))$" and ``$ H\succ 0 $ and $ \|H^{-1}g\|^2\notin\argmin \rho $" ({\it this latter relation is equivalent to $ \|(H-\widetilde \lambda I)^\dagger g\|^2 > (\rho^+)'_+(-\widetilde \lambda) $, thanks to $ \widetilde \lambda = 0 $ and the derivation of \eqref{valueat0}}); the hard case 2 in Table \ref{Table:easyhard} corresponds to the cases ``$ \lambda_{\min}(H)\le 0 $ and $ \|(H-\lambda_{\min}(H) I)^\dagger g\|^2 \le (\rho^+)'_+(-\lambda_{\min}(H)) $" and ``$ H\succ 0 $ and $ \|H^{-1}g\|^2\in\argmin \rho $" ({\it the latter relation is equivalent to $ \|H^{-1}g\|^2\le (\rho^+)'_+(0)$ in view of \eqref{aaa}}). According to Theorem~\ref{thm:hardcases}, explicit solutions can be obtained in hard case 2.
\renewcommand{\arraystretch}{1.5}
\begin{table}[H]
%	\centering
	\caption{Easy and hard cases for $\rho$RS }\label{Table:easyhard}
	\begin{threeparttable}
		\centering
		{\small
			\begin{tabular}{| l | l | l |}\hline
				Easy case &   Hard case 1 & Hard case 2 \\  \hline
				$g \notin {\rm Range}(H-\widetilde \lambda I)$ &  $g \in {\rm Range}(H-\widetilde \lambda I)$  & $g \in {\rm Range}(H-\widetilde \lambda I)$ \\
				(implies $\lambda^* < \widetilde \lambda $, thus $\lambda^*<0$)  & $\lambda^*<\widetilde \lambda$ (thus $\lambda^*<0 $) &$\lambda^*=\widetilde \lambda\le 0$  \\
				~ & $ \|(H-\widetilde \lambda I)^{\dagger}g\|^2> (\rho^+)'_+(-\widetilde \lambda) $ &  $ \|(H-\widetilde \lambda I)^\dagger g\|^2 \le (\rho^+)'_+(-\widetilde \lambda) $  \\ \hline
			\end{tabular}
			\begin{tablenotes}
				\item[*] Here $ \widetilde \lambda = \min\{0, \lambda_{\min}(H)\}. $
			\end{tablenotes}
		}
	\end{threeparttable}
\end{table}

\subsection{Properties of the dual function $\widehat k$}\label{sec:propk}

We collect some properties of $\widehat k$ in \eqref{kt10} that will be useful in our subsequent algorithmic development. For notational simplicity, from now on, we write
\begin{equation}\label{define}
\begin{aligned}
\lambda(t)&:= \lambda_{\min}\left(\begin{bmatrix}
    t& g^T\\g& H
  \end{bmatrix}\right)\\
  {\rm and}\ \ \ \bar t := \lim\limits_{\lambda \uparrow \lambda_{\min}(H)}&[\lambda + g^T(H - \lambda I)^{-1} g]\in (-\infty,\infty].
\end{aligned}
\end{equation}
We first recall the following lemma concerning $\lambda(t)$, which is a direct consequence of the inverse function theorem. The results can be found in \cite[Theorem~7.1]{FortinWolk:03} and \cite[Corollary~7.2]{FortinWolk:03}.
% and \cite[Corollary~3.3(i)]{PongWolk:12}.
%Note that the $\lambda(t)$ defined
%in \eqref{define} equals the value $\frac{1}{s+1}k_0(t)$ defined right before
%\cite[Section~3.1]{PongWolk:12}, with $A = H$, $a = g$, $B = I$ and $b =
%0$; and that the corresponding $s_0$ defined in
%\cite[Eq.~(3.4)]{PongWolk:12} equals $0$. Here, we use $s+1=2 > 1$
%to agree with the implicit assumption $s > s_0$ used in Case 2 of
%\cite[Section~3.1]{PongWolk:12}. Moreover, since $g\neq 0$, it holds that
%\[
%\begin{bmatrix}
%  g\\0
%\end{bmatrix}\notin {\rm Range}\left(\begin{bmatrix}
%  H\\I
%\end{bmatrix}\right)
%\]
%so that the other implicit assumption used in \cite[Section~3.1]{PongWolk:12} is also satisfied.

\begin{lem}[Properties of $\lambda(t)$]\label{lem1}
  Let $\lambda(t)$ and $\bar t$ be defined as in \eqref{define}. Then the function $t\mapsto \lambda(t)$ is continuous and well-defined everywhere. It is differentiable on $(-\infty,\bar t)\cup (\bar t,\infty)$. Moreover, the following statements hold:
  \begin{enumerate}[{\rm (i)}]
    \item When $t\in (\bar t,\infty)$, it holds that $\lambda(t) = \lambda_{\min}(H)$.
    \item When $t\in (-\infty,\bar t)$, we have that $\lambda(t)$ is the unique root
of the equation
  \[
  t = \lambda + g^T(H - \lambda I)^{-1} g
  \]
  on $(-\infty,\lambda_{\min}(H))$, and, in addition, $\lambda'(t)>0$.
  \end{enumerate}
\end{lem}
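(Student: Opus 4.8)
The plan is to prove Lemma~\ref{lem1} by working directly with the bordered matrix $\begin{bmatrix} t & g^T \\ g & H\end{bmatrix}$ and exploiting the interlacing structure between its smallest eigenvalue $\lambda(t)$ and $\lambda_{\min}(H)$. The key elementary fact is the \emph{secular equation}: if $\mu < \lambda_{\min}(H)$, then $\mu$ is an eigenvalue of the bordered matrix if and only if the Schur complement $t - \mu - g^T(H - \mu I)^{-1}g$ vanishes, i.e.\ $t = \mu + g^T(H-\mu I)^{-1}g$. I would begin by recording this and noting that $\lambda(t) \le \lambda_{\min}(H)$ always holds, since deleting the first row and column of the bordered matrix leaves $H$, so by eigenvalue interlacing (Cauchy) the smallest eigenvalue of the bordered matrix is at most $\lambda_{\min}(H)$.

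\textbf{Continuity and well-definedness.} The function $t \mapsto \lambda(t)$ is well-defined everywhere since $\lambda_{\min}$ of a symmetric matrix is always defined, and it is continuous because the matrix entries depend continuously (affinely) on $t$ and $\lambda_{\min}(\cdot)$ is a continuous function on $\cS^{n+1}$ (being the minimum of the continuous Rayleigh quotient over the unit sphere, or by standard perturbation theory for eigenvalues). This part is routine.

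\textbf{The regime $t \in (-\infty,\bar t)$.} Here I would argue that $\lambda(t) < \lambda_{\min}(H)$ strictly. Consider the scalar function $\phi(\mu) := \mu + g^T(H - \mu I)^{-1}g$ on $(-\infty,\lambda_{\min}(H))$. Since $g \neq 0$, differentiating gives $\phi'(\mu) = 1 + g^T(H-\mu I)^{-2}g > 0$, so $\phi$ is strictly increasing; as $\mu \to -\infty$, $\phi(\mu) \to -\infty$, and as $\mu \uparrow \lambda_{\min}(H)$, $\phi(\mu) \to \bar t$ by definition \eqref{define}. Hence $\phi$ is a strictly increasing bijection from $(-\infty,\lambda_{\min}(H))$ onto $(-\infty,\bar t)$. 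For $t$ in this range, the unique $\mu = \phi^{-1}(t) < \lambda_{\min}(H)$ solving the secular equation is an eigenvalue of the bordered matrix strictly below $\lambda_{\min}(H)$, hence it must equal $\lambda(t)$ (nothing smaller is available, and this value is genuinely attained). This gives the characterization in item (ii). Differentiability on $(-\infty,\bar t)$ and the sign $\lambda'(t) > 0$ then follow from the inverse function theorem applied to $\phi$ (since $\phi' > 0$), giving $\lambda'(t) = 1/\phi'(\lambda(t)) = \bigl(1 + g^T(H - \lambda(t)I)^{-2}g\bigr)^{-1} \in (0,1)$.

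\textbf{The regime $t \in (\bar t,\infty)$.} Here I claim $\lambda(t) = \lambda_{\min}(H)$ exactly. It cannot be strictly smaller: if $\lambda(t) < \lambda_{\min}(H)$ it would solve the secular equation $t = \phi(\lambda(t))$, but the range of $\phi$ is $(-\infty,\bar t)$, which excludes $t > \bar t$ — contradiction. And $\lambda_{\min}(H)$ \emph{is} an eigenvalue of the bordered matrix here: taking any unit eigenvector $v$ of $H$ with $Hv = \lambda_{\min}(H)v$ that is additionally orthogonal to $g$ (such $v$ exists, as I explain below), the vector $(0,v^T)^T$ is an eigenvector of the bordered matrix with eigenvalue $\lambda_{\min}(H)$; combined with $\lambda(t) \le \lambda_{\min}(H)$ this forces equality. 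Differentiability on $(\bar t,\infty)$ and $\lambda' \equiv 0$ there are then immediate since $\lambda(t)$ is constant. This part, together with the case $\bar t = \infty$ (where the second regime is empty), completes the proof.

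\textbf{Main obstacle.} The delicate point is the existence of an eigenvector of $H$ at $\lambda_{\min}(H)$ that is orthogonal to $g$, which is needed to pin down $\lambda(t) = \lambda_{\min}(H)$ in the regime $t > \bar t$. When $\bar t < \infty$ this is automatic: $\bar t < \infty$ forces $g \in {\rm Range}(H - \lambda_{\min}(H)I)$ (otherwise $g$ has a nonzero component in $\ker(H - \lambda_{\min}(H)I)$ and $g^T(H-\mu I)^{-1}g \to +\infty$ as $\mu \uparrow \lambda_{\min}(H)$, forcing $\bar t = +\infty$), equivalently $g \perp \ker(H - \lambda_{\min}(H)I)$, so \emph{every} eigenvector at $\lambda_{\min}(H)$ works. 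I would state this implication carefully — it is essentially the content of Lemma~\ref{hehehaha} — and otherwise the argument is a bookkeeping exercise in the secular equation and the inverse function theorem, exactly as the cited \cite[Theorem~7.1, Corollary~7.2]{FortinWolk:03}.
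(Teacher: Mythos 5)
Your proof is correct, and it follows exactly the route the paper indicates: the paper does not prove Lemma~\ref{lem1} but cites \cite[Theorem~7.1, Corollary~7.2]{FortinWolk:03} and notes it is ``a direct consequence of the inverse function theorem,'' which is precisely your argument via the Schur-complement secular equation $t=\mu+g^T(H-\mu I)^{-1}g$, Cauchy interlacing, and the inverse function theorem applied to the strictly increasing bijection $\phi$. Your handling of the delicate point (that $\bar t<\infty$ forces $g\perp\ker(H-\lambda_{\min}(H)I)$, so $\lambda_{\min}(H)$ is attained for $t>\bar t$) is sound, though interlacing together with the nonexistence of a secular root below $\lambda_{\min}(H)$ already pins down $\lambda(t)=\lambda_{\min}(H)$ there.
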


\begin{theorem}[Properties of $\widehat k$]\label{thm1}
Consider \eqref{Prob_general} and suppose that
Assumptions~\ref{assump:one}-\ref{assump:three}
hold. Let $\widehat k$ be given in \eqref{kt10}, and let $\lambda(t)$ and $\bar
t$ be defined as in \eqref{define}. Then the following statements hold.
  \begin{enumerate}[{\rm (i)}]
    \item For each $t\in \R$, the infimum in \eqref{kt10} is attained, with the set of minimizers given by
    \[
    \begin{cases}
      \{0\} & {\rm if}\ \lambda(t)> 0,\\
      (\argmin\rho + 1)\cap \R_+ & {\rm if}\ \lambda(t)= 0,\\
      \left\{(\rho^+)'(-\lambda(t))+1\right\} & {\rm otherwise}.
    \end{cases}
    \]
    \item $\widehat k$ is a continuous concave function.
    \item The set of superdifferential of $\widehat k$ at any $t\in \R$
is the convex hull of the set of all numbers of the form of $\gamma v_0^2$, where
    $\gamma$ attains the infimum in \eqref{kt10}, and $\begin{bmatrix}
    v_0& \bar v^T
  \end{bmatrix}^T\in \R^{n+1}$ is a unit eigenvector of $\begin{bmatrix}
    t& g^T\\g& H
  \end{bmatrix}$ corresponding to $\lambda(t)$.
    \item $\widehat k$ is differentiable except possibly at the two points:
\[
\bar t; \qquad \ {\rm and}\ {\rm also}\
g^TH^{-1}g \ {\rm if}\  \lim_{t\uparrow \bar t}\lambda(t) >0.
\]
%\item It holds that the limits of the derivative satisfies
%\[
%\lim_{t\to \infty}\widehat k^\prime(t) = 0, \quad
%       \lim_{t\to -\infty}\widehat k^\prime(t) = \infty.
%\]
%In particular, the supreme in \eqref{rw_dual} is attained.
  \end{enumerate}
\end{theorem}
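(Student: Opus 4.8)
The plan is to handle the four items in order: (i)--(ii) are short computations that also yield a closed form for $\widehat k$, (iii) is the substantive step, and (iv) follows from (iii) and Lemma~\ref{lem1}. For (i), I would substitute $s=\gamma-1$ and study $\inf_{s\ge-1}\{\rho(s)+(s+1)\lambda(t)\}$, splitting on the sign of $\lambda(t)$ and using Assumption~\ref{assump:one} (so $\rho\ge0$, $\rho\equiv0$ on $(-\infty,0]$, $\rho$ nondecreasing). The cases $\lambda(t)>0$ (unique minimizer $s=-1$, i.e.\ $\gamma=0$, since $\rho\ge0$ and $(s+1)\lambda(t)$ is increasing) and $\lambda(t)=0$ (minimizers $s\in\argmin\rho\cap[-1,\infty)$, i.e.\ $\gamma\in(\argmin\rho+1)\cap\R_+$) are immediate. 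For $\lambda(t)<0$, writing the objective as $\lambda(t)-[\,s(-\lambda(t))-\rho(s)\,]$, the bracket has supremum $0$ over $s\in[-1,0]$ and supremum $\rho^+(-\lambda(t))\ge0$ over $s\ge0$, so the infimum is $\lambda(t)-\rho^+(-\lambda(t))$, attained at the maximizers of $s\mapsto s(-\lambda(t))-\rho(s)$ over $s\ge0$, which form $\partial\rho^+(-\lambda(t))=\{(\rho^+)'(-\lambda(t))\}$ by Assumption~\ref{assump:three} (as $-\lambda(t)>0$); since $\rho^+(-\lambda(t))=0$ forces $(\rho^+)'(-\lambda(t))=0$ by convexity, $s=0$ is a co-minimizer only in that degenerate subcase, so the minimizer in $\gamma$ is always the singleton $\{(\rho^+)'(-\lambda(t))+1\}$. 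This computation also records $\widehat k(t)=0$ when $\lambda(t)\ge0$ and $\widehat k(t)=\lambda(t)-\rho^+(-\lambda(t))$ when $\lambda(t)<0$.

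For (ii), the key point is that $t\mapsto\lambda(t)$ is concave, being an infimum over unit $v\in\R^{n+1}$ of the affine maps $t\mapsto v^T\!\left(\begin{bmatrix}0&g^T\\g&H\end{bmatrix}+t\,e_1e_1^T\right)v$, where $e_1$ is the first coordinate vector; hence each $t\mapsto\rho(\gamma-1)+\gamma\lambda(t)$ with $\gamma\ge0$ is concave, so their infimum $\widehat k$ is concave, and being finite on $\R$ by (i) it is continuous. For (iii), I would factor $\widehat k=\psi\circ\lambda$ with $\psi(\mu):=\inf_{\gamma\ge0}\{\rho(\gamma-1)+\gamma\mu\}$. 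Then $\psi$ is concave (an infimum of affine functions), nondecreasing, and finite, and $\partial\psi(\mu)$ equals the set of minimizing $\gamma$'s --- the standard description of the superdifferential of an infimum of affine functions, which one may also confirm by writing $\psi(\mu)=-F^*(-\mu)$ with $F(\gamma):=\rho(\gamma-1)+\delta_{\R_+}(\gamma)$ and using $\partial F^*(\xi)=\argmax_\gamma\{\xi\gamma-F(\gamma)\}$; by (i) this set is always a (possibly degenerate) interval in $\R_+$. On the other side, combining the standard formula $\partial\lambda_{\min}(A)={\rm conv}\{vv^T:\|v\|=1,\ Av=\lambda_{\min}(A)v\}$ with the (qualification-free) chain rule for the affine map $A(t)=\begin{bmatrix}0&g^T\\g&H\end{bmatrix}+t\,e_1e_1^T$ gives $\partial\lambda(t)={\rm conv}\{v_0^2:\begin{bmatrix}v_0&\bar v^T\end{bmatrix}^T\text{ a unit eigenvector of }\begin{bmatrix}t&g^T\\g&H\end{bmatrix}\text{ for }\lambda(t)\}$, again an interval in $\R_+$.

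Then the chain rule for a concave nondecreasing function composed with a concave function --- valid without any constraint qualification, since all functions are finite on $\R$ --- yields $\partial\widehat k(t)=\partial\psi(\lambda(t))\cdot\partial\lambda(t)$ as a product of sets. Since $\partial\psi(\lambda(t))$ and $\partial\lambda(t)$ are intervals in $\R_+$ and the latter is the convex hull of $\{v_0^2:\ldots\}$, a short argument identifies this product with the convex hull of all numbers $\gamma v_0^2$ where $\gamma$ attains the infimum in \eqref{kt10} and $\begin{bmatrix}v_0&\bar v^T\end{bmatrix}^T$ is a corresponding unit eigenvector --- the asserted form. I expect (iii) to be the main obstacle: one has to invoke the two one-dimensional superdifferential formulas correctly, check that the composition needs no constraint qualification, and reconcile the ``$\gamma$ attains the infimum'' and ``convex hull'' phrasings.

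For (iv), I would use that a finite concave function on $\R$ is differentiable at $t$ iff its superdifferential there is a singleton, and read off from (iii) when $\partial\widehat k(t)=\Gamma(t)\cdot\partial\lambda(t)$ --- with $\Gamma(t)$ the minimizer set of \eqref{kt10} --- degenerates to a point. By Lemma~\ref{lem1}, $\lambda$ is differentiable on $(-\infty,\bar t)\cup(\bar t,\infty)$; for $t>\bar t$ it is constant, so $\partial\lambda(t)=\{0\}$ and hence $\partial\widehat k(t)=\Gamma(t)\cdot\{0\}=\{0\}$, while for $t<\bar t$ we have $\partial\lambda(t)=\{\lambda'(t)\}$ with $\lambda'(t)>0$, so $\partial\widehat k(t)=\lambda'(t)\,\Gamma(t)$ is a singleton precisely when $\Gamma(t)$ is. By (i), $\Gamma(t)$ fails to be a singleton only when $\lambda(t)=0$ and $\argmin\rho$ is not a single point; but by Lemma~\ref{lem1}(ii), $\lambda(t)=0$ with $t<\bar t$ forces $0\in(-\infty,\lambda_{\min}(H))$ and $t=g^TH^{-1}g$, and strict monotonicity of $\lambda$ on $(-\infty,\bar t)$ then gives $\lim_{t\uparrow\bar t}\lambda(t)>\lambda(g^TH^{-1}g)=0$. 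Hence $\widehat k$ is differentiable at every $t$ other than $\bar t$ and, when $\lim_{t\uparrow\bar t}\lambda(t)>0$, the point $g^TH^{-1}g$, which is exactly the claim of (iv).
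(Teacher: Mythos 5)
Your proposal is correct, and items (i), (ii) and (iv) run essentially along the paper's lines; the genuine difference is in item (iii), which is also the substantive step. The paper proves (iii) by writing $-\widehat k(t)=\sup_{\gamma\ge0}\bigl\{-\rho(\gamma-1)+\gamma\lambda_{\max}\bigl(\begin{smallmatrix}-t&-g^T\\-g&-H\end{smallmatrix}\bigr)\bigr\}$, showing via Lemma~\ref{auxiliary_lem}(ii), the boundedness of $\argmin\rho\cap[-M,\infty)$ and item (i) that near any fixed $t$ the supremum can be restricted to a compact range $\gamma\in[0,R]$, and then invoking the theorem on subdifferentials of a pointwise supremum over a compact index set \cite[Theorem~4.4.2]{HL}, from which the convex hull of $\{\gamma v_0^2\}$ drops out directly. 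You instead factor $\widehat k=\psi\circ\lambda$, identify $\partial\psi(\mu)$ with the minimizer set via conjugacy (legitimate, since $F=\rho(\cdot-1)+\delta_{\R_+}$ is closed proper convex so that $\partial F^*(\xi)=\argmax_\gamma\{\xi\gamma-F(\gamma)\}$), take $\partial\lambda(t)$ from the standard $\lambda_{\min}$-superdifferential formula precomposed with an affine pencil, and multiply via the chain rule for a finite nondecreasing concave function of a finite concave function; since all functions are finite on $\R$ no qualification is needed, and since both factors are intervals in $\R_+$ the product $[a,b]\cdot[c,d]=[ac,bd]$ indeed coincides with the stated convex hull. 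Your route trades the paper's compactification step for the monotone-composition chain rule, yields the closed form $\widehat k(t)=\min\{0,\lambda(t)-\rho^+(-\lambda(t))\}$ as a by-product of (i), and makes (iv) particularly transparent through $\partial\widehat k(t)=\lambda'(t)\,\Gamma(t)$ for $t<\bar t$. Two minor remarks: in the degenerate subcase $\rho^+(-\lambda(t))=0$ of (i), the maximizer over $s\in[-1,0]$ is also only $s=0=(\rho^+)'(-\lambda(t))$, so your singleton conclusion survives; and in (iv) the aside about ``$\argmin\rho$ not a single point'' is moot, since under Assumption~\ref{assump:one} one always has $[0,1]\subseteq(\argmin\rho+1)\cap\R_+$ so that $\Gamma(t)$ is never a singleton when $\lambda(t)=0$ --- but this only reinforces, and does not affect, the claimed conclusion.
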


\begin{proof}
  \begin{enumerate}[{\rm (i)}]
    \item
\label{item:itheorem}
For simplicity of notation, we write $\widehat\lambda:= \lambda(t)$.
Now, define the convex function
\[
f(\gamma) := \rho(\gamma - 1) + \widehat
\lambda\gamma + \delta_{\R_+}(\gamma).
\]

Recall that $\rho$ is nondecreasing thanks to Assumption~\ref{assump:one}.
Thus, if $\widehat \lambda > 0$, then necessarily $\argmin f = \{0\}$. On the other hand, if $\widehat \lambda = 0$, then $f$ is minimized when $\rho(\gamma - 1) = 0$ and $\gamma \ge 0$, which translates to $\argmin f = (\argmin\rho + 1)\cap \R_+$.

Finally, suppose that $\widehat \lambda < 0$. Note that $\argmin f$ is nonempty thanks to the supercoercivity assumption on $\rho$. Moreover, since $\rho(\gamma - 1) = 0$ whenever $\gamma \le 1$ according to Assumption~\ref{assump:one} and $\widehat \lambda < 0$, we must have $\gamma^*\ge 1$ for any $\gamma^*\in \argmin f$. Thus, any $\gamma^*\in \argmin f$ also minimizes $f$ on the interval $[1,\infty)$, i.e., $0 \in \partial(f + \delta_{[1,\infty)})(\gamma^*)$. We then have
\[
-\widehat \lambda \in \partial(\rho + \delta_{\R_+})(\gamma^*-1),
\]
which implies
\[
\gamma^*-1 \in \partial(\rho + \delta_{\R_+})^*(-\widehat \lambda) = \{(\rho^+)'(-\widehat\lambda)\},
\]
where the inclusion follows from Young's inequality, and the equality follows from the definition of monotone conjugate, Assumption~\ref{assump:three} and the fact that $-\widehat \lambda > 0$. This proves item (\ref{item:itheorem}).

    \item The concavity follows because $\widehat k$ is the infimum of concave functions ($\gamma \ge 0$)
    \[
    t\mapsto \rho(\gamma-1) + \gamma \lambda_{\min}\left(\begin{bmatrix}
    t& g^T\\g& H
  \end{bmatrix}\right).
    \]
    Moreover, the function $\widehat k$ is finite everywhere thanks to item (\ref{item:itheorem}). Thus, $\widehat k$ is continuous.
    \item
\label{item:thmiii} Note that the set of superdifferentials of $\widehat k$ at $t$ equals $-\partial(-\widehat k)(t)$, where $-\widehat k$ is the convex function given by
    \[
    -\widehat k(t) = \sup_{\gamma \ge 0}\left\{-\rho(\gamma-1) + \gamma \lambda_{\max}\left(\begin{bmatrix}
    -t& -g^T\\-g& -H
  \end{bmatrix}\right)\right\}.
    \]

    Moreover, notice from Assumptions~\ref{assump:one} and \ref{assump:two} that $\argmin \rho$ is
nonempty and that $\argmin \rho \cap [-M,\infty)$ is bounded for any $M
\ge 0$. This together with Lemma~\ref{auxiliary_lem} item (ii), the continuity of $t\mapsto \lambda(t)$ and item (\ref{item:itheorem}) shows that for any $\delta > 0$, there exists $R > 0$ so that
    \[
    -\widehat k(t) = \sup_{\gamma \in [0,R]}\left\{-\rho(\gamma-1) + \gamma \lambda_{\max}\left(\begin{bmatrix}
    -t& -g^T\\-g& -H
    \end{bmatrix}\right)\right\},
    \]
    whenever $|t| < \delta$; here, $R$ is chosen such that $ [0, R] $ contains all minimizers attaining the infimum in \eqref{kt10} for $|t|< \delta$.
    The desired conclusion now follows from \cite[Theorem~4.4.2, Page~189]{HL}.
    \item
\label{item:theoremiv}
From Lemma~\ref{lem1}, we know that $t\mapsto \lambda(t)$ is differentiable except at $\bar t$. Moreover, one can compute that $\lambda'(t) = 0$ when $t> \bar t$. Thus, if $t>\bar t$ and $\begin{bmatrix}
    v_0& \bar v^T
  \end{bmatrix}^T\in \R^{n+1}$ is a unit eigenvector of $\begin{bmatrix}
    t& g^T\\g& H
  \end{bmatrix}$ corresponding to $\lambda(t)$, then we must have $v_0^2
= 0$. This together with item (\ref{item:thmiii})
shows that $\widehat k$ is differentiable
whenever $t> \bar t$, with $\widehat k'(t) = 0$.

    Next, consider those $t \in (-\infty,\bar t)$. For these $t$,
according to Lemma~\ref{lem1}, $t\mapsto \lambda(t)$ is strictly
increasing and differentiable. Now, if $\lim_{t\uparrow \bar t}\lambda(t) > 0$, then
$\lambda^{-1}(0) = \{g^TH^{-1}g\}$ is a singleton set, and the set of minimizers attaining the infimum in \eqref{kt10} (with $t = g^TH^{-1}g$) is $ (\argmin \rho + 1)\cap \R_+ $ according to item (\ref{item:itheorem}). Hence, we conclude
from items (\ref{item:itheorem}) and (\ref{item:thmiii}) that $\widehat k$
is differentiable on the set difference
$(-\infty,\bar t)\backslash \{g^TH^{-1}g\}$ if $\lim_{t\uparrow \bar t}\lambda(t) > 0$, and is differentiable on $(-\infty,\bar t)$ otherwise.

Thus, $\widehat k$ is differentiable everywhere except at $\bar t$ and, if $\lim_{t\uparrow \bar t}\lambda(t)>0$, also at $g^TH^{-1}g$.\qed
%    \item
%We first note from item \ref{item:theoremiv} that $\widehat k^\prime(t)$ exists for all sufficiently large or sufficiently small $t$.
%
%    Recall that $\lambda(t)$ equals $\frac1{s+1}(k(t) + t)$, where
%$k(t)$ is defined right before \cite[Section~3.1]{PongWolk:12} with
%$s+1=2$, $A = H$, $a = g$, $B = I$ and $b = 0$, and the corresponding
%$s_0$ defined in \cite[Eq.~(3.4)]{PongWolk:12} equals $0$. The
%conclusion that $\lim_{t\to \infty}\widehat k^\prime(t) = 0$ follows from (iii)
%and \cite[Theorem~3.3]{PongWolk:12}, which gives $\lim_{t\to \infty}\lambda'(t) = 0$.
%
%    Next, note that for all sufficiently small $t$,
%we have $\lambda(t) < 0$ and $\lambda(\cdot)$ differentiable at those $t$.
%We then see from items \eqref{item:itheorem} and \eqref{item:thmiii} that
%    \[
%    \widehat k^\prime(t) = \left(\frac{4[\lambda(t)]^2}{M^2}+1\right)\lambda'(t).
%    \]
%    One can also check from the definition that
%$\lim_{t\to-\infty}\lambda(t) = -\infty$. Moreover, we obtain from
%\cite[Theorem~3.3]{PongWolk:12} that $\lim_{t\to-\infty}\lambda'(t) =
%1$. We then conclude from these that $\lim_{t\to -\infty}\tilde
%k^\prime(t) = \infty$. This completes the proof. %Finally, the conclusion concerning attainment
%%follows immediately because $\lim_{t\to \infty}\widehat k^\prime(t) < 1$
%%and $\lim_{t\to -\infty}\widehat k^\prime(t) >1$.
\end{enumerate}
\end{proof}

\begin{remark}\label{rem3.3}
Note that $\lim_{t\uparrow \bar t}\lambda(t) = \lambda_{\min}(H)$. Thus,
according to Theorem~\ref{thm1} item \eqref{item:theoremiv}, if $H$ is not
positive definite, then $\widehat k$ is differentiable except possibly at one
point: $\bar t$.
\end{remark}

\subsection{Primal solution recovery from RW-dual}

We discuss how one can recover an optimal solution of
\eqref{Prob_general} after solving \eqref{rw_dual0}: recall that
solutions of \eqref{rw_dual0} exist under Assumptions~\ref{assump:one} and \ref{assump:two}, according to Theorem~\ref{thm:rwdual}. We also get a bound on the location of the maximizers of \eqref{rw_dual0}.

\begin{theorem}[Differentiable maximizer of $\widehat k$]\label{thm2}
  Consider \eqref{Prob_general} and suppose that Assumptions~\ref{assump:one}-\ref{assump:three} hold. Let $\lambda(t)$ and $\bar t$ be defined as in \eqref{define} and let $t^*$ be a maximizer of \eqref{rw_dual0}. Then $\lambda(t^*) \le 0$. Suppose in addition that $\widehat k$ in \eqref{kt10} is differentiable at $t^*$. Then $\lambda(t^*) < 0$, and for any unit eigenvector $\begin{bmatrix}
    v_0&\bar v^T
  \end{bmatrix}^T\in \R^{n+1}$ (with $v_0\in \R$) of $\begin{bmatrix}
    t^*& g^T\\g& H
  \end{bmatrix}$ corresponding to $\lambda(t^*)$, it holds that $v_0\neq 0$ and $x^*:= \frac1{v_0}\bar v$ solves \eqref{Prob_general}. Furthermore, the quantities
  \begin{align}
    \tau &:= \!\sup\{\alpha\ge 0:\; \rho(\alpha^2) \le 2\max\{2\|g\|\alpha,\|H\|\alpha^2\}\}, \ {\rm and} \label{tau}\\
    \kappa &:= \!\sup\!\big\{\!\{0\}\!\cup\! \big\{\beta \!>\! \max\{\lambda_{\min}(H),0\}\!:\,\! \beta\sqrt{(\rho^+)'(\beta - \lambda_{\min}(H))} \!\le\! \|g\|\big\}\!\big\} \label{kappa}
  \end{align}
  are finite and we have
  \begin{equation}\label{t_bd0}
  \lambda_{\min}(H) - \zeta \le t^* \le \min\left\{\bar t,\lambda_{\min}(H) + \eta\|g\|\right\}
  \end{equation}
  for any $\eta \ge \tau $ and any $\zeta \ge \kappa$.
\end{theorem}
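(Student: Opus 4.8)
The plan is to establish the three assertions in turn: (i) $\lambda(t^*)\le 0$ for any maximizer $t^*$; (ii) under differentiability at $t^*$, the conclusion $\lambda(t^*)<0$, $v_0\neq 0$, and primal recovery $x^*=\bar v/v_0$; and (iii) finiteness of $\tau,\kappa$ and the two-sided bound \eqref{t_bd0}. For (i), I would first invoke Theorem~\ref{thm:rwdual} to know $v_{_{{\rm RW}d}}=v_{\rho r}$ is finite and a maximizer $t^*$ exists; the key point is that by the last line of the proof of Theorem~\ref{thm:rwdual}, $t^*=\lambda^*+g^T(H-\lambda^*I)^\dagger g$ for the Lagrange-dual maximizer $\lambda^*\le 0$, and by Lemma~\ref{lem1}(ii) (or directly) this forces $\lambda(t^*)=\lambda^*\le 0$. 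Alternatively and more robustly, I would argue via the superdifferential: $0\in\partial(\widehat k(\cdot)-\cdot)(t^*)$ means $1\in\partial\widehat k(t^*)$, so by Theorem~\ref{thm1}(iii) there is a minimizer $\gamma$ of \eqref{kt10} and a unit eigenvector with $\gamma v_0^2=1$ (or $1$ is a convex combination of such), hence $\gamma\ge 1>0$; by Theorem~\ref{thm1}(i) a positive minimizer $\gamma$ is incompatible with $\lambda(t^*)>0$ (which forces $\gamma=0$), so $\lambda(t^*)\le 0$.

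For (ii), assume $\widehat k$ is differentiable at $t^*$. Then $\widehat k'(t^*)=1$, and by Theorem~\ref{thm1}(iii) the superdifferential is the single number $\gamma v_0^2$ for \emph{every} eigenvector/minimizer pair, so $\gamma v_0^2=1$ for any choice; in particular $v_0\neq 0$ and $\gamma=1/v_0^2>0$. A positive minimizer $\gamma$ rules out the cases $\lambda(t^*)>0$ (minimizer $\{0\}$) and also $\lambda(t^*)=0$ unless $0\in\argmin\rho+1$, i.e.\ $-1\in\argmin\rho$, which is impossible since $\rho\ge 0=\rho(0)$ and $\rho$ is nondecreasing with $\rho$ not identically $0$ on $[0,\infty)$ (Assumption~\ref{assump:one}); so $\lambda(t^*)<0$. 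Then the eigenvalue equation $\begin{bmatrix}t^*&g^T\\g&H\end{bmatrix}\begin{bmatrix}v_0\\\bar v\end{bmatrix}=\lambda(t^*)\begin{bmatrix}v_0\\\bar v\end{bmatrix}$ gives, from the bottom block, $g v_0+H\bar v=\lambda(t^*)\bar v$, i.e.\ $(H-\lambda(t^*)I)x^*=-g$ with $x^*=\bar v/v_0$. Next, $H-\lambda(t^*)I\succeq 0$ since $\lambda(t^*)=\lambda_{\min}$ of a symmetric matrix having $H$ as a principal submatrix (interlacing), and $\lambda(t^*)\le 0$. Finally, for the third relation in \eqref{optcon:gen}: from $\gamma=1/v_0^2$ and Theorem~\ref{thm1}(i), $\gamma-1=(\rho^+)'(-\lambda(t^*))$; since $\|x^*\|^2=\|\bar v\|^2/v_0^2=(1-v_0^2)/v_0^2=\gamma-1$, we get $\|x^*\|^2=(\rho^+)'(-\lambda(t^*))\in\partial(\rho+\delta_{\R_+})^*(-\lambda(t^*))$, which by Young's inequality is equivalent to $-\lambda(t^*)\in\partial(\rho+\delta_{\R_+})(\|x^*\|^2)$. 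Thus \eqref{optcon:gen} holds with $\lambda^{*}=\lambda(t^*)$, and Theorem~\ref{thm:opt_cond} yields that $x^*$ solves \eqref{Prob_general}.

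For (iii), finiteness of $\tau$ follows from Assumption~\ref{assump:two}: supercoercivity of $\rho$ on the nonnegative side forces $\rho(\alpha^2)/\alpha^2\to\infty$, hence $\rho(\alpha^2)>2\max\{2\|g\|\alpha,\|H\|\alpha^2\}$ for all large $\alpha$, so the supremum in \eqref{tau} is finite. Finiteness of $\kappa$ follows from Lemma~\ref{auxiliary_lem}(iii): there is $\hat t>0$ with $(\rho^+)'(\hat t)>0$, hence $(\rho^+)'(\beta-\lambda_{\min}(H))$ is bounded below by a positive constant once $\beta-\lambda_{\min}(H)>\hat t$, so $\beta\sqrt{(\rho^+)'(\beta-\lambda_{\min}(H))}\to\infty$ and the set in \eqref{kappa} is bounded above. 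For the bound \eqref{t_bd0}: the upper bound $t^*\le\bar t$ holds because by Lemma~\ref{lem1}(i) $\lambda(t)$ is constant equal to $\lambda_{\min}(H)$ for $t>\bar t$, so $\widehat k$ is constant there (Theorem~\ref{thm1}(i),(iv) give $\widehat k'=0$ beyond $\bar t$), hence $\widehat k(t)-t$ is strictly decreasing for $t>\bar t$ and no maximizer lies there. For the remaining bound $\lambda_{\min}(H)+\eta\|g\|\ge t^*$ with $\eta\ge\tau$: write $\lambda^{*}:=\lambda(t^*)\le 0$; in the easy/hard-case-1 situation $\lambda^{*}<\widetilde\lambda$ and the solution is $x^*=-(H-\lambda^{*}I)^{-1}g$ with $\|x^*\|^2=(\rho^+)'(-\lambda^{*})$, and the optimality $2g^Tx^*+{x^*}^THx^*+\rho(\|x^*\|^2)=v_{\rho r}\le\rho(0)+0=0$ together with Cauchy--Schwarz $|2g^Tx^*|\le 2\|g\|\|x^*\|$ and $|{x^*}^THx^*|\le\|H\|\|x^*\|^2$ forces $\rho(\|x^*\|^2)\le 2\max\{2\|g\|\|x^*\|,\|H\|\|x^*\|^2\}$, so by definition \eqref{tau} $\|x^*\|\le\tau\le\eta$; then from $t^*=\lambda^{*}+g^T(H-\lambda^{*}I)^{-1}g$ and $x^*=-(H-\lambda^{*}I)^{-1}g$ we get $g^T(H-\lambda^{*}I)^{-1}g=-g^Tx^*\le\|g\|\|x^*\|\le\eta\|g\|$, hence $t^*\le\lambda^{*}+\eta\|g\|\le\eta\|g\|\le\lambda_{\min}(H)+\eta\|g\|$ after noting $\lambda^{*}\le\lambda_{\min}(H)$... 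I would instead bound $t^*=\lambda^*+g^T(H-\lambda^*I)^{-1}g\le \lambda_{\min}(H)+\eta\|g\|$ more carefully using $\lambda^*\le\lambda_{\min}(H)$ only if also $g^T(H-\lambda^*I)^{-1}g$ is controlled, which it is; in the hard-case-2 situation ($\lambda^{*}=\widetilde\lambda$, explicit $x^*$ from \eqref{xhardcase}) one runs the same optimality-value argument with $t^*=\lambda_{\min}(H)+\|x^*\|^2$ — indeed $t^*=\lambda^{*}+g^T(H-\lambda^{*}I)^\dagger g$ and $\|x^*\|^2=(\rho^+)'_+(-\lambda^{*})$ while $g^T(H-\lambda^{*}I)^\dagger g=-g^Tx^*_\perp$ with $x^*_\perp=-(H-\lambda_{\min}(H)I)^\dagger g$, giving $t^*\le\lambda_{\min}(H)+\|g\|\|x^*\|\le\lambda_{\min}(H)+\eta\|g\|$. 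For the lower bound $t^*\ge\lambda_{\min}(H)-\zeta$: since $\widehat k(t)-t$ is maximized at $t^*$ and $\widehat k(t)-t$ is concave with $\widehat k'(t)\to$ (values related to $\lambda(t)$) as $t\to-\infty$, I would instead argue contrapositively: if $t^*<\lambda_{\min}(H)-\kappa$ then $\lambda^{*}=\lambda(t^*)<\lambda_{\min}(H)-\kappa<\widetilde\lambda$, so \eqref{eq:nonleq} holds, i.e.\ $\|(H-\lambda^{*}I)^{-1}g\|^2=(\rho^+)'(-\lambda^{*})$; but setting $\beta:=-\lambda^{*}+\lambda_{\min}(H)>\kappa>\max\{\lambda_{\min}(H),0\}$... one has $\|(H-\lambda^{*}I)^{-1}g\|\ge\|g\|/(\lambda_{\min}(H)-\lambda^{*})=\|g\|/(\beta)$, hmm, need the reverse; actually $\|(H-\lambda^{*}I)^{-1}g\|\le\|g\|/(\lambda_{\min}(H)-\lambda^{*})$ and $(\rho^+)'(-\lambda^{*})=(\rho^+)'(\beta-\lambda_{\min}(H))$, so $(\rho^+)'(\beta-\lambda_{\min}(H))\le\|g\|^2/(\lambda_{\min}(H)-\lambda^{*})^2$ is not quite \eqref{kappa}; I would instead use the \emph{reverse} inequality: since $t^*=\lambda^*+g^T(H-\lambda^*I)^{-1}g$ and from Lemma~\ref{lem1}(ii) this is the root, combined with $g^T(H-\lambda^*I)^{-1}g\le\|g\|^2/(\lambda_{\min}(H)-\lambda^*)$, we get $t^*\ge\lambda^*\ge\lambda_{\min}(H)-$ something; the clean route is to note $t^*-\lambda_{\min}(H)=(\lambda^*-\lambda_{\min}(H))+g^T(H-\lambda^*I)^{-1}g$ and use that a maximizer satisfies $1\in\partial\widehat k(t^*)$, which by Theorem~\ref{thm1}(i),(iii) gives $\gamma=(\rho^+)'(-\lambda^*)+1$ and $\gamma v_0^2=1$, hence $(\rho^+)'(-\lambda^*)=\|x^*\|^2=\|(H-\lambda^*I)^{-1}g\|^2\le\|g\|^2/(\lambda_{\min}(H)-\lambda^*)^2$; so $(\lambda_{\min}(H)-\lambda^*)^2(\rho^+)'(\lambda_{\min}(H)-\lambda^*)\le\|g\|^2$ — wait, the argument of $(\rho^+)'$ should be $-\lambda^*$, and $-\lambda^*\ge\lambda_{\min}(H)-\lambda^*$ when $\lambda_{\min}(H)\le 0$, but if $\lambda_{\min}(H)>0$ these differ; in all cases $-\lambda^*=\beta-\lambda_{\min}(H)$ with $\beta:=-\lambda^*+\lambda_{\min}(H)$, giving exactly $\beta\sqrt{(\rho^+)'(\beta-\lambda_{\min}(H))}\le\|g\|$, so $\beta\le\kappa$, i.e.\ $-\lambda^*+\lambda_{\min}(H)\le\kappa$, hence $\lambda^*\ge\lambda_{\min}(H)-\kappa$, and therefore $t^*\ge\lambda^*\ge\lambda_{\min}(H)-\kappa\ge\lambda_{\min}(H)-\zeta$ for any $\zeta\ge\kappa$. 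The main obstacle is precisely this lower bound: one must handle the hard-case-2 subtlety where $\lambda^*=\lambda_{\min}(H)$ (so $\beta=0$, trivially $\le\kappa$) separately from the strict case, and keep careful track of the pseudoinverse versus inverse and the sign of $\lambda_{\min}(H)$ throughout.
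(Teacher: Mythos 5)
Your overall route is the same as the paper's: use the superdifferential characterization in Theorem~\ref{thm1}(iii) together with the optimality condition $1\in\partial\widehat k(t^*)$ to get $\gamma v_0^2=1$, recover $x^*$ from the eigenvector equation and verify \eqref{optcon:gen} via Theorem~\ref{thm:opt_cond}, and obtain \eqref{t_bd0} from the two scalar relations $(H-\lambda^*I)x^*=-g$ and $t^*+g^Tx^*=\lambda^*$. The finiteness arguments for $\tau$ and $\kappa$, the bound $\|x^*\|\le\tau$ from $v_{\rho r}\le 0$, and both halves of \eqref{t_bd0} (including the substitution $\beta=\lambda_{\min}(H)-\lambda^*$ and the separate treatment of $\beta=0$) all match the paper's proof after the false starts are stripped away.

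There is, however, one genuinely broken step: your exclusion of $\lambda(t^*)=0$ in the differentiable case. You claim this case is ruled out ``unless $0\in\argmin\rho+1$, i.e.\ $-1\in\argmin\rho$, which is impossible.'' But $-1\in\argmin\rho$ is \emph{true}: Assumption~\ref{assump:one} gives $\rho(t)=0$ for all $t\le 0$ and $\rho\ge 0$, so $(-\infty,0]\subseteq\argmin\rho$ and hence $[0,1]\subseteq(\argmin\rho+1)\cap\R_+$. Moreover, the mere existence of a positive minimizer does not conflict with $\lambda(t^*)=0$, since $1$ always belongs to $(\argmin\rho+1)\cap\R_+$. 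The correct argument runs in the opposite direction: precisely \emph{because} $\gamma=0$ is then a minimizer of \eqref{kt10}, the value $0=0\cdot v_0^2$ lies in the superdifferential of $\widehat k$ at $t^*$; differentiability makes that set a singleton, forcing $\widehat k'(t^*)=0$, which contradicts $\widehat k'(t^*)=1$ at a maximizer (equivalently, contradicts $\gamma v_0^2=1$ holding for \emph{every} minimizer $\gamma$). This is exactly how the paper handles it, and it is the repair your write-up needs. A smaller point: your first argument for $\lambda(t^*)\le 0$, which identifies $t^*$ with the particular maximizer $\lambda^*+g^T(H-\lambda^*I)^\dagger g$ constructed in the proof of Theorem~\ref{thm:rwdual}, does not apply to an \emph{arbitrary} maximizer of \eqref{rw_dual0}; your ``alternative'' supergradient argument is the one to keep.
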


\begin{proof}
  If $\lambda(t^*) > 0$, we have from Theorem~\ref{thm1}
items \eqref{item:itheorem} and \eqref{item:thmiii}
that $\widehat k^\prime(t^*)= 0$, which means that the objective in \eqref{rw_dual0} has a slope of $-1$ at $t^*$, contradicting the optimality of $t^*$. Thus, $\lambda(t^*) \le 0$.

  We now assume in addition that $\widehat k$ is differentiable at $t^*$.
If $\lambda(t^*) = 0$, then we have from Theorem~\ref{thm1} item \eqref{item:itheorem} that the
infimum in \eqref{kt10} (with $t = t^*$) is attained for any $\gamma\in
(\argmin \rho + 1)\cap \R_+$. Moreover, we see from Assumption~\ref{assump:one} that
$[0,1]\subseteq (\argmin \rho + 1)\cap \R_+$. These together with Theorem~\ref{thm1} item \eqref{item:thmiii} and the
differentiability of $\widehat k$ at $t^*$ implies that $\widehat k^\prime(t^*) = 0$, again contradicting the optimality of $t^*$. Thus, we must have $\lambda(t^*) < 0$.

  Next, let $\begin{bmatrix}
    v_0& \bar v^T
  \end{bmatrix}^T\in \R^{n+1}$ (with $v_0\in \R$) be a unit eigenvector of $\begin{bmatrix}
    t^*& g^T\\g& H
  \end{bmatrix}$ corresponding to $\lambda(t^*)$ and write $\gamma^*:=
(\rho^+)'(-\lambda(t^*))+1$, which is well defined because of Assumption~\ref{assump:three} and $\lambda(t^*)
< 0$. Then, by Theorem~\ref{thm1} item \eqref{item:itheorem}, the infimum in \eqref{kt10} (with $t = t^*$) is uniquely attained by $\gamma^*$. Moreover, Theorem~\ref{thm1} item \eqref{item:thmiii} shows that $\gamma^*v_0^2$ is a supergradient of $\widehat k$ at $t^*$. Since $t^*$ is a maximizer and $\widehat k$ is differentiable at $t^*$, we must then have
  \begin{equation}\label{rel1}
    \gamma^*v_0^2=1,
  \end{equation}
  which further implies that $v_0\neq 0$.

  We now show that $x^*:= \frac1{v_0}\bar v$ solves \eqref{Prob_general}. To
this end, we first note from the definition of an eigenvector that
  \begin{equation}\label{rel2}
  \begin{bmatrix}
    t^* & g^T\\
    g& H
  \end{bmatrix}\begin{bmatrix}
    v_0\\ \bar v
  \end{bmatrix} = \lambda(t^*)\begin{bmatrix}
    v_0\\ \bar v
  \end{bmatrix}.
  \end{equation}
  Dividing both sides of the above relation by $v_0$ and rearranging terms for the second row, we obtain further that
  \begin{equation}\label{opt1}
    (H - \lambda(t^*)I)x^* = -g.
  \end{equation}
  On the other hand, since $\begin{bmatrix}
    v_0& \bar v^T
  \end{bmatrix}^T$ is a unit vector, we have, using the definition of $x^*$, that
  \begin{equation}\label{opt20}
  \|x^*\|^2 = \frac{\|\bar v\|^2}{v_0^2} = \frac{\|\bar v\|^2 + v_0^2}{v_0^2} - 1=  \frac1{v_0^2}-1 = \gamma^* - 1 = (\rho^+)'(-\lambda(t^*)),
  \end{equation}
  where the second last equality follows from \eqref{rel1} and the last
equality follows from the definition of $\gamma^*$. Thus,
  \begin{equation}\label{opt2}
  -\lambda(t^*)\in \partial (\rho + \delta_{\R_+})(\|x^*\|^2).
  \end{equation}
  On the other hand, we have $\lambda(t^*)\le \lambda_{\min}(H)$ by
  the interlacing property of eigenvalues, and hence $H \succeq \lambda(t^*)I$.
  In view of this, $\lambda(t^*) < 0$, \eqref{opt1} and \eqref{opt2}, we
conclude from Theorem~\ref{thm:opt_cond} that $x^*$ is a minimizer of \eqref{Prob_general}.

  Next, note that $\tau = \sup\{\alpha\ge 0:\; \rho(\alpha^2) \le
2\max\{2\|g\|\alpha,\|H\|\alpha^2\}\}\in [0,\infty)$ thanks to the
supercoercivity assumption on $\rho$ in Assumptions~\ref{assump:two}. Also, recall from
Lemma~\ref{auxiliary_lem} item (iii) that $(\rho^+)'(\hat t) > 0$ for some $\hat t > 0$. Thus, $(\rho^+)'(t)\ge (\rho^+)'(\hat t) > 0$ for all $t\ge \hat t$ due to convexity. We can then show via a standard contradiction argument that $\kappa$ is finite.

  Finally, we derive the bound \eqref{t_bd0} for $t^*$. Since
$\lambda(t)=\lambda_{\min}(H)$ whenever $t > \bar t$ according to
Lemma~\ref{lem1}, we conclude using Theorem~\ref{thm1} item~\eqref{item:thmiii} that $\widehat k'(t) = 0$ whenever $t> \bar t$. Hence the interval $(\bar t,\infty)$ cannot contain a maximizer of \eqref{rw_dual0}. Thus, $t^*$ is bounded above by $\bar t$. Next, dividing both sides of \eqref{rel2} by $v_0$ and writing $\lambda^* = \lambda(t^*)$ for notational simplicity, we obtain
  \begin{equation}\label{1st_opt0}
  \begin{aligned}
    (H-\lambda^*I)x^*  &= -g,\\
    t^* + g^Tx^*  &= \lambda^*.
  \end{aligned}
  \end{equation}
  Thus, we have
  \begin{equation}\label{t_upper0}
  t^* =\lambda^* - g^Tx^* \le \lambda_{\min}(H) - g^Tx^* \le \lambda_{\min}(H) + \|g\|\|x^*\|,
  \end{equation}
  where the first inequality follows from interlacing. Since $x^*$ is a minimizer of \eqref{Prob_general} and $\rho(0) = 0$, we have
  \[
  \begin{aligned}
    \rho(\|x^*\|^2)&\le -2g^Tx^* - {x^*}^THx^*\le 2\|g\|\|x^*\| + \|H\|\|x^*\|^2\\
    & \le 2\max\{2\|g\|\|x^*\|, \|H\|\|x^*\|^2\}.
  \end{aligned}
  \]
  This implies that
  \[
  \|x^*\|\le \tau.
  \]
  The second inequality in \eqref{t_bd0} now follows immediately from this and \eqref{t_upper0}.

  It now remains to establish the first inequality in \eqref{t_bd0}. To this end, let $\delta := \lambda_{\min}(H) - \lambda^*$. Then $\delta \ge 0$ due to interlacing, and $H - \lambda^* I \succeq \delta I$. Using these and the two relations in \eqref{1st_opt0}, we obtain that
  \begin{equation}\label{t_rel0}
  \begin{aligned}
  t^* = \lambda^* - g^Tx^* &= \lambda^* + {x^*}^T(H-\lambda^*I)x^*\\
  & \ge \lambda^* + \delta\|x^*\|^2 \ge \lambda^* = \lambda_{\min}(H) - \delta.
  \end{aligned}
  \end{equation}
  If $\delta = 0$, then the first inequality in \eqref{t_bd0} follows immediately from \eqref{t_rel0}. We thus focus on the case when $\delta > 0$. In this case, we have $0\prec (H - \lambda^*I)^{-1}\preceq \delta^{-1}I$ and hence
  \[
  \delta\|x^*\|^2 \le t^* - \lambda^* = g^T (H - \lambda^* I)^{-1}g \le \frac{1}{\delta}\|g\|^2,
  \]
  where the first inequality follows from the first inequality in
\eqref{t_rel0}, and the equality follows from Lemma~\ref{lem1} because $\lambda_{\min}(H) - \lambda^* = \delta > 0$. Consequently,
  \[
  \|g\|\ge \delta \|x^*\| = \delta \sqrt{(\rho^+)'(\delta - \lambda_{\min}(H))},
  \]
  where we made use of \eqref{opt20} and the definition of $\delta$ in the equality. This implies that $\delta \le \kappa$.
  The first inequality in \eqref{t_bd0} now follows immediately from this and \eqref{t_rel0}. This completes the proof. \qed
\end{proof}

We next discuss what happens when $\widehat k$ in \eqref{rw_dual0} is not differentiable at
a maximizer $t^*$ of \eqref{rw_dual0}. Recall from Theorem~\ref{thm1} item \eqref{item:theoremiv} that this can only possibly happen if ``$t^* < \bar t$ and $\lambda(t^*) = 0$" (note that $\lambda(g^TH^{-1}g) = 0$ in this case) or ``$t^* = \bar t$", where $\lambda(t)$ and $\bar t$ are defined in \eqref{define}.

\begin{theorem}[Possibly nondifferentiable maximizer of $\widehat k$]\label{thm3}
Consider \eqref{Prob_general} and suppose that
Assumptions~\ref{assump:one}-\ref{assump:three} hold.
Let $\lambda(t)$ and $\bar t$ be defined as in \eqref{define} and let $t^*$ be a maximizer of \eqref{rw_dual0}. Then the following statements hold.
  \begin{enumerate}[{\rm (i)}]
    \item If $\lambda(t^*) = 0$, then $\lambda_{\min}(H) \ge 0$ and $g\in {\rm Range}(H)$.
    \item If $\lambda(t^*)\neq 0$ and $t^* = \bar t$, then $\lambda(t^*) < 0$, $\lambda(t^*) = \lambda_{\min}(H)$ and $g\in {\rm Range}(H - \lambda_{\min}(H)I)$.
  \end{enumerate}
\end{theorem}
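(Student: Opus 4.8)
The plan is to derive both statements quickly from the structural facts already established for $\lambda(\cdot)$ and $\bar t$ in \eqref{define} --- namely Lemma~\ref{lem1} and Lemma~\ref{hehehaha} --- together with the interlacing property of eigenvalues and the bound $\lambda(t^*)\le 0$ from Theorem~\ref{thm2}. As a preliminary step I would first record a companion of Lemma~\ref{hehehaha} for the scalar $g^T(H-\lambda I)^{-1}g$: if $g\notin{\rm Range}(H-\lambda_{\min}(H)I)$ then $\bar t=\infty$. This follows by writing $H=Q\Lambda Q^T$ (with columns $q_i$ of $Q$ and diagonal entries $\lambda_i$ of $\Lambda$) and observing that, for $\lambda<\lambda_{\min}(H)$, $g^T(H-\lambda I)^{-1}g=\sum_i(q_i^Tg)^2/(\lambda_i-\lambda)\ge\|Pg\|^2/(\lambda_{\min}(H)-\lambda)$, where $P$ is the orthogonal projection onto $\ker(H-\lambda_{\min}(H)I)$; since $Pg\ne0$ under the hypothesis, the right-hand side --- hence also $\lambda+g^T(H-\lambda I)^{-1}g$ --- tends to $+\infty$ as $\lambda\uparrow\lambda_{\min}(H)$.

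For item (i), I would assume $\lambda(t^*)=0$. The interlacing property, applied to the symmetric matrix defining $\lambda(t^*)$ in \eqref{define} and its principal submatrix $H$, gives $0=\lambda(t^*)\le\lambda_{\min}(H)$, which is the first assertion. For $g\in{\rm Range}(H)$ I would split into cases: if $\lambda_{\min}(H)>0$ then $H\succ0$ and ${\rm Range}(H)=\R^n$, so there is nothing to prove; if $\lambda_{\min}(H)=0$ and, towards a contradiction, $g\notin{\rm Range}(H)={\rm Range}(H-\lambda_{\min}(H)I)$, then the preliminary step yields $\bar t=\infty$, so Lemma~\ref{lem1} item (ii) applies on all of $(-\infty,\bar t)=\R$ and forces $\lambda(t)<\lambda_{\min}(H)=0$ for every $t$, contradicting $\lambda(t^*)=0$.

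For item (ii), I would assume $\lambda(t^*)\ne0$ and $t^*=\bar t$. Theorem~\ref{thm2} gives $\lambda(t^*)\le0$, hence $\lambda(t^*)<0$. Since the supremum in \eqref{rw_dual0} is attained at a point of $\R$ (Theorem~\ref{thm:rwdual}), any maximizer $t^*$ is a real number, so $\bar t=t^*\in\R$, i.e.\ $\bar t<\infty$. Using that $t\mapsto\lambda(t)$ is continuous (Lemma~\ref{lem1}) and equals $\lambda_{\min}(H)$ on $(\bar t,\infty)$ (Lemma~\ref{lem1} item (i)), I would let $t\downarrow\bar t$ to obtain $\lambda(t^*)=\lambda(\bar t)=\lambda_{\min}(H)$, so $\lambda_{\min}(H)=\lambda(t^*)<0$. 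Finally, were $g\notin{\rm Range}(H-\lambda_{\min}(H)I)$, the preliminary step would force $\bar t=\infty$, contradicting $\bar t<\infty$; hence $g\in{\rm Range}(H-\lambda_{\min}(H)I)$. None of these steps is hard; the only one needing a little care is the preliminary step --- the companion of Lemma~\ref{hehehaha} with $g^T(H-\lambda I)^{-1}g$ in place of $\|(H-\lambda I)^{-1}g\|^2$ --- after which everything reduces to bookkeeping with the interlacing property, Lemma~\ref{lem1}, and Theorem~\ref{thm2}.
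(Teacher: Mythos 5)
Your argument is correct, but it reaches the range conclusions by a genuinely different route from the paper. The paper gets $g\in {\rm Range}(H)$ in item (i) and $g\in{\rm Range}(H-\lambda_{\min}(H)I)$ in item (ii) in one line each: since $\lambda(t^*)$ is by definition the smallest eigenvalue of the bordered matrix, $\begin{bmatrix} t^* & g^T\\ g & H\end{bmatrix}-\lambda(t^*)I\succeq 0$, and the standard range condition for positive semidefinite block matrices (the generalized Schur complement fact that $\begin{bmatrix} a & b^T \\ b & C\end{bmatrix}\succeq 0$ forces $b\in{\rm Range}(C)$) immediately yields the claim, with $\lambda(t^*)=0$ in item (i) and $\lambda(t^*)=\lambda_{\min}(H)$ in item (ii). You instead argue by contraposition through the auxiliary observation that $g\notin{\rm Range}(H-\lambda_{\min}(H)I)$ forces $\bar t=\infty$ (via the eigendecomposition lower bound $g^T(H-\lambda I)^{-1}g\ge \|Pg\|^2/(\lambda_{\min}(H)-\lambda)$), and then extract contradictions from Lemma~\ref{lem1}: in item (i) because $\bar t=\infty$ would force $\lambda(t)<\lambda_{\min}(H)=0$ for all $t$, and in item (ii) because $\bar t=t^*$ is finite. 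All the supporting steps (interlacing for $\lambda(t^*)\le\lambda_{\min}(H)$, Theorem~\ref{thm2} for $\lambda(t^*)\le 0$, continuity of $\lambda(\cdot)$ and Lemma~\ref{lem1} item (i) for $\lambda(\bar t)=\lambda_{\min}(H)$) coincide with the paper's. The trade-off is that your route is longer and needs the extra divergence lemma, but it is entirely elementary and self-contained, whereas the paper's is shorter but leans on the PSD block-matrix range fact, which it invokes without proof.
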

\begin{proof}
  \begin{enumerate}[{\rm (i)}]
    \item Suppose that $\lambda(t^*) = 0$. Since we have from interlacing that $\lambda(t^*)\le \lambda_{\min}(H)$, it follows that $\lambda_{\min}(H) \ge 0$. Then, using the definition of $\lambda(t^*)$ as minimum eigenvalue, we have
        \[
        \begin{bmatrix}
          t^* & g^T\\ g& H
        \end{bmatrix} =
        \begin{bmatrix}
          t^* & g^T\\ g& H
        \end{bmatrix} - \lambda(t^*)I \succeq 0.
        \]
        This implies $g\in {\rm Range}(H)$ as desired.
    \item Suppose that $\lambda(t^*)\neq 0$ and $t^* = \bar t$. We
conclude from Theorem~\ref{thm2} that $\lambda(t^*) < 0$. Moreover, from Lemma~\ref{lem1}, we have by continuity that $\lambda(\bar t) = \lambda_{\min}(H)$. Hence, $\lambda(t^*) = \lambda_{\min}(H)$ and we have
        \[
        \begin{bmatrix}
          t^* & g^T\\ g& H
        \end{bmatrix} - \lambda_{\min}(H)I =
        \begin{bmatrix}
          t^* & g^T\\ g& H
        \end{bmatrix} - \lambda(t^*)I \succeq 0,
        \]
        we conclude further that $g\in {\rm Range}(H - \lambda_{\min}(H)I)$. This completes the proof.\qed
  \end{enumerate}
\end{proof}

From Theorem~\ref{thm3}, we know that if $\widehat k$ in \eqref{rw_dual0} fails
to be differentiable at a maximizer $t^*$ of \eqref{rw_dual0}, then
either ``$H\succ 0$" or ``$g\in {\rm Range}(H - \lambda_{\min}(H)I)$ and $\lambda_{\min}(H)\le 0$". These two
conditions coincide with those of hard-case instances discussed in Theorem~\ref{thm:hardcases}. For hard case 2, we have explicit solutions according to Theorem~\ref{thm:hardcases}. In the next theorem, we show that $\widehat k$ is still differentiable at its maximizer under conditions corresponding to those of hard case 1 in Table \ref{Table:easyhard}; see also the alternative descriptions of hard case 1 preceding that table. Thus, Theorem~\ref{thm2} can again be invoked for obtaining a solution of \eqref{Prob_general} in hard case 1.

\begin{theorem}[Differentiability of $\widehat k$ in hard case 1]\label{thm4}
Consider \eqref{Prob_general} and suppose that
Assumptions~\ref{assump:one}-\ref{assump:three} hold. Let $\lambda(t)$ be defined as in \eqref{define}.
   \begin{enumerate}[{\rm (i)}]
     \item Suppose that $H\succ 0$ and $\|H^{-1}g\|^2 \notin \argmin \rho$. Then it holds that $\lambda(t^*) < 0$, where $t^*$ is a maximizer of \eqref{rw_dual0}. Moreover, $\widehat k$ is differentiable at $t^*$.
     \item Suppose that $\lambda_{\min}(H) \le 0$, $g\in {\rm Range}(H - \lambda_{\min}(H)I)$ and
	 \[
\|(H - \lambda_{\min}(H) I)^\dagger g\| >  \sqrt{(\rho^+)_+'(-\lambda_{\min}(H))}.
\]
Then $\lambda(t^*) < \lambda_{\min}(H)$, where $t^*$ is a maximizer of \eqref{rw_dual0}. Moreover, $\widehat k$ is differentiable at $t^*$.
   \end{enumerate}
\end{theorem}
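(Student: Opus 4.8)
The plan is to reduce everything to two facts already at hand: Theorem~\ref{thm1} item~\eqref{item:theoremiv} says that $\widehat k$ can fail to be differentiable only at $\bar t$ and --- when $\lim_{t\uparrow\bar t}\lambda(t)>0$ --- also at $g^TH^{-1}g$; and Theorem~\ref{thm2} says that once $\widehat k$ is known to be differentiable at a maximizer $t^*$ of~\eqref{rw_dual0}, then $\lambda(t^*)<0$ automatically. Hence in both items the whole task is to rule out that a maximizer $t^*$ lands on one of these exceptional points, and then, in item~(ii), to place $t^*$ on the strictly increasing branch $(-\infty,\bar t)$ so as to upgrade $\lambda(t^*)<0$ to $\lambda(t^*)<\lambda_{\min}(H)$ via Lemma~\ref{lem1}. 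Throughout I use that a maximizer $t^*$ of $t\mapsto\widehat k(t)-t$ must satisfy $1\in\partial\widehat k(t^*)$ by concavity, and the main tool is Theorem~\ref{thm1} item~\eqref{item:thmiii}, which describes $\partial\widehat k(t^*)$ as the convex hull of the numbers $\gamma v_0^2$ with $\gamma$ attaining the infimum in~\eqref{kt10} and $\begin{bmatrix}v_0&\bar v^T\end{bmatrix}^T$ a unit eigenvector of $\begin{bmatrix}t^*&g^T\\g&H\end{bmatrix}$ for $\lambda(t^*)$.

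For item~(i), since $H\succ 0$ we have $\lambda_{\min}(H)>0$ and (Remark~\ref{rem3.3}) $\lim_{t\uparrow\bar t}\lambda(t)=\lambda_{\min}(H)>0$, so the exceptional points are exactly $\bar t$ and $g^TH^{-1}g$. Because $\lambda(t)=\lambda_{\min}(H)>0$ for all $t\ge\bar t$ while $\lambda(t^*)\le 0$ by Theorem~\ref{thm2}, we get $t^*<\bar t$, and the only remaining possibility is $t^*=g^TH^{-1}g$, where $\lambda(t^*)=0$. I would eliminate this by contradiction: at such $t^*$ the matrix $\begin{bmatrix}t^*&g^T\\g&H\end{bmatrix}$ is positive semidefinite with kernel spanned by $\begin{bmatrix}1&-(H^{-1}g)^T\end{bmatrix}^T$, so every unit eigenvector for the eigenvalue $0$ has $v_0^2=(1+\|H^{-1}g\|^2)^{-1}$; combining this with Theorem~\ref{thm1} items~\eqref{item:itheorem} and~\eqref{item:thmiii} (the minimizing $\gamma$'s form $(\argmin\rho+1)\cap\R_+$) gives $\partial\widehat k(t^*)=(1+\|H^{-1}g\|^2)^{-1}\bigl((\argmin\rho+1)\cap\R_+\bigr)$. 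Then $1\in\partial\widehat k(t^*)$ would force $1+\|H^{-1}g\|^2\in\argmin\rho+1$, i.e.\ $\|H^{-1}g\|^2\in\argmin\rho$, contradicting the hypothesis. Hence $\widehat k$ is differentiable at $t^*$ and $\lambda(t^*)<0$ follows from Theorem~\ref{thm2}.

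For item~(ii), since $\lambda_{\min}(H)\le 0$, $H$ is not positive definite, so by Remark~\ref{rem3.3} the only candidate exceptional point is $\bar t$, which is moreover finite because $g\in{\rm Range}(H-\lambda_{\min}(H)I)$. Suppose $t^*=\bar t$; then $\lambda(t^*)=\lambda_{\min}(H)=:\mu$ by continuity of $\lambda(\cdot)$. With $P:=(H-\mu I)^\dagger$, one checks --- using $g\perp\ker(H-\mu I)$ and $\bar t=\mu+g^TPg$ --- that the kernel of $\begin{bmatrix}\bar t&g^T\\g&H\end{bmatrix}-\mu I$ consists of the vectors $\begin{bmatrix}v_0&(-v_0Pg+w)^T\end{bmatrix}^T$, $v_0\in\R$, $w\in\ker(H-\mu I)$, so over unit such vectors $v_0^2$ ranges over $[0,(1+\|Pg\|^2)^{-1}]$. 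By Theorem~\ref{thm1} item~\eqref{item:itheorem}, the minimizer $\gamma$ in~\eqref{kt10} at $t=\bar t$ equals $(\rho^+)'(-\mu)+1$ if $\mu<0$, and ranges over $(\argmin\rho+1)\cap\R_+$ if $\mu=0$; in the latter case $\sup\argmin\rho=(\rho^+)'_+(0)$ by the identity $\argmin(\rho+\delta_{\R_+})=\partial\rho^+(0)=[0,(\rho^+)'_+(0)]$ used in the derivation around~\eqref{valueat0} (with Lemma~\ref{auxiliary_lem} item~(i) supplying $(\rho^+)'_-(0)=0$). Therefore, by Theorem~\ref{thm1} item~\eqref{item:thmiii}, $\sup\partial\widehat k(\bar t)=\bigl((\rho^+)'_+(-\mu)+1\bigr)(1+\|Pg\|^2)^{-1}$ in both sub-cases, and $1\in\partial\widehat k(\bar t)$ would force $\|Pg\|^2\le(\rho^+)'_+(-\mu)$, contradicting $\|Pg\|>\sqrt{(\rho^+)'_+(-\mu)}$. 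Hence $t^*\neq\bar t$, so $\widehat k$ is differentiable at $t^*$ and $\lambda(t^*)<0$ by Theorem~\ref{thm2}; finally $\widehat k'\equiv0$ on $(\bar t,\infty)$ (Theorem~\ref{thm1} item~\eqref{item:theoremiv}) rules out $t^*>\bar t$, so $t^*<\bar t$, and Lemma~\ref{lem1} then yields $\lambda(t^*)<\lambda_{\min}(H)$.

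The step I expect to be the main obstacle is the explicit determination of $\partial\widehat k$ at the exceptional point via Theorem~\ref{thm1} item~\eqref{item:thmiii}: one has to correctly identify the kernel of the relevant (shifted) bordered matrix --- noting in particular that for item~(ii) this kernel contains eigenvectors with $v_0=0$ (coming from $\ker(H-\mu I)$), so that $v_0^2$ must be \emph{maximized} over the kernel rather than simply read off --- and then carry out the short but error-prone bookkeeping that converts ``$1\in\partial\widehat k$ at the exceptional point'' into exactly the negation of the hard-case-1 inequality, including the identity $\sup\argmin\rho=(\rho^+)'_+(0)$ needed in the case $\lambda_{\min}(H)=0$.
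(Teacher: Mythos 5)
Your proposal is correct, but it follows a genuinely different route from the paper's proof. The paper argues from the primal/Lagrange-dual side: in each item it takes the root $\lambda^*$ of the secular equation \eqref{eq:nonleq} supplied by Theorem~\ref{thm:hardcases} (with $\lambda^*<0$ in item (i) and $\lambda^*<\lambda_{\min}(H)$ in item (ii)), notes that this $\lambda^*$ maximizes the Lagrange dual \eqref{L_dual_gen}, pushes it through the change of variables in \eqref{deftildet} to exhibit the maximizer $t^*=\lambda^*+g^T(H-\lambda^* I)^{-1}g$ of \eqref{rw_dual0}, and reads off $\lambda(t^*)=\lambda^*$ from Lemma~\ref{lem1}; differentiability then drops out of Theorem~\ref{thm1} item (iv) and Remark~\ref{rem3.3}. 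You instead stay entirely on the RW-dual side and argue by contradiction at the two potential kinks: you compute the relevant eigenspace of the bordered matrix there, evaluate $\partial\widehat k$ via Theorem~\ref{thm1} item (iii), and show that the stationarity condition $1\in\partial\widehat k(t^*)$ is precisely the negation of the hard-case-1 inequality. I checked the delicate steps and they hold: the kernel computations are right (including the $v_0=0$ directions contributed by $\ker(H-\lambda_{\min}(H)I)$ in item (ii), which is why one must maximize $v_0^2$ over the eigenspace), $\bar t=\lambda_{\min}(H)+g^T(H-\lambda_{\min}(H)I)^\dagger g$ is indeed finite under the range condition, and the identity $\sup\argmin\rho=(\rho^+)'_+(0)$ follows from $\argmin(\rho+\delta_{\R_+})=\partial\rho^+(0)$ together with Lemma~\ref{auxiliary_lem}(i). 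What your approach buys is that it treats an \emph{arbitrary} maximizer $t^*$ uniformly and yields an explicit dual certificate for why the exceptional points cannot be optimal, at the cost of the extra eigenspace bookkeeping; the paper's argument is shorter because it recycles Theorem~\ref{thm:hardcases} and the $t\leftrightarrow\lambda$ correspondence, though it does so by producing one particular maximizer rather than analyzing all of them.
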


\begin{proof}
We first prove item (i). Let $ \lambda^* $ be given by  the latter part of
Theorem~\ref{thm:hardcases} item (i). That means $ \lambda^* $ satisfies $ \lambda^*<0<\lambda_{\min}(H) $ and $ \|(H-\lambda^* I)^{-1}g\|^2 = (\rho^+)'(-\lambda^*) $.
Comparing the latter relation with the optimality condition of
\eqref{L_dual_gen}, we see that $\lambda^*$ is a maximizer of
\eqref{L_dual_gen}. Using this and the definition of $\tilde t$ in
\eqref{deftildet}, we deduce further that $t^* = g^T(H - \lambda^*
I)^{-1}g + \lambda^*$ is a maximizer of \eqref{rw_dual0}. We then see
from Lemma~\ref{lem1} that $\lambda(t^*) = \lambda^* < 0
<\lambda_{\min}(H)$ as desired. The claim on differentiability follows
from this and Theorem~\ref{thm1} item (iv).

We now prove item (ii). Let $ \lambda^* $ satisfy the conclusion given by
Theorem~\ref{thm:hardcases} under conditions  $\lambda_{\min}(H) \le 0$, $g\in {\rm Range}(H - \lambda_{\min}(H)I)$ and
$\|(H - \lambda_{\min}(H) I)^\dagger g\| >  \sqrt{(\rho^+)_+'(-\lambda_{\min}(H))}$. It follows that $\lambda^* < \lambda_{\min}(H) \le 0$ and
\[
\|(H - \lambda^* I)^{-1} g\|^2 =  (\rho^+)'(-\lambda^*).
\]
Then following the same lines of arguments used in the proof of item (i) above, we get $ \lambda(t^*)=\lambda^*<\lambda_{\min}(H) $ and the differentiability of $ \widehat{k} $ follows from Remark~\ref{rem3.3}. \qed
\end{proof}

\subsection{RW algorithm for $\rho$RS}

The RW algorithm for solving $\rho$RS \eqref{Prob_general} with a $\rho$
satisfying Assumptions~\ref{assump:one}-\ref{assump:three} is presented in Algorithm~\ref{RW_rho}, which is
essentially a routine for solving the RW-dual \eqref{rw_dual0}; we refer the readers to \cite[Section~3.3]{PongWolk:12} for the detailed descriptions of vertical cut, triangle interpolation and inverse linear interpolation. Note
that we ruled out the case that $\widehat k$ is nondifferentiable at its
maximizer in Step 0, in view of Theorems~\ref{thm3} and \ref{thm4}. Thus, when a maximizer is found, one can recover a
solution of \eqref{Prob_general} according to Theorem~\ref{thm2}.\\

\begin{algorithm}[h]
\caption{RW$_\rho$: Rendl-Wolkowicz algorithm for $\rho$RS \eqref{Prob_general}}\label{RW_rho}
\begin{algorithmic}
\STATE
\begin{enumerate}
\item \underline{\bf Initialization}: If ``$H\succ 0$" or ``$g\in {\rm
Range}(H - \lambda_{\min}(H)I)$ and $\lambda_{\min}(H) \le 0$", perform
the case check described in Theorem~\ref{thm:hardcases} and obtain an explicit solution if possible.
\item \underline{\bf Main loop}: Now, according to Theorem~\ref{thm1} item \eqref{item:theoremiv} and Theorems~\ref{thm3} and \ref{thm4},
$\widehat k$ must be differentiable at any maximizer $t^*$ of \eqref{rw_dual0}. %Fix an $\alpha \ge 1$.
\begin{enumerate}[{\rm (i)}]
  \item Find an initial interval $I$ containing $t^*$ as in \eqref{t_bd0}.
  \item Update $t$.
  \begin{itemize}
    \item Compute the midpoint of $I$ as a candidate for updating $t$.
    \item If points at which $t\mapsto \widehat k(t)-t$ has positive and negative supergradients, respectively, are known, then:
    \begin{enumerate}
      \item Perform {\bf vertical cut} and reduce $I$ if possible. Update $t$ if vertical cut is performed successfully (i.e.,~yielding a point in $I$).
      \item Perform {\bf triangle interpolation} and update $t$ if triangle interpolation is successful.
    \end{enumerate}
    \item Perform {\bf inverse linear interpolation} on the equation
$(\widehat k^\prime(t))^{-\frac14} = 1$, and update $t$ if this step is successful.
  \end{itemize}
  \item Compute the function value and a supergradient of $\widehat k$
at $t$ as described in Theorem~\ref{thm1}. At points of differentiability, also
generate a primal iterate as described in Theorem~\ref{thm2}.
  \item Repeat Steps (i), (ii) and (iii) if a termination criterion is not met.
\end{enumerate}
\end{enumerate}
	\end{algorithmic}
\end{algorithm}

\section{Specific instances: $p$-regularization subproblem and beyond}\label{sec:pRS}

\subsection{$ p $-regularization subproblem}\label{subsec:prs}
%\subsubsection{The Lagrange dual and RW-dual for $p$-regularization subproblem}\label{sect:RWdualpRS}

We now specialize the results in Sections~\ref{sec:LD_opt} and \ref{sec:RW_dual} to derive the Lagrange dual and RW-dual for $p$RS in \eqref{Prob_prs}.
Note that this is a special case of
\eqref{Prob_general} with $\rho(t) = \frac{M}{p}t_+^{\frac{p}{2}}$: one can
check that both Assumptions~\ref{assump:one} and \ref{assump:two} are satisfied. Moreover, it is routine to compute that
\begin{equation}\label{rhoplus_prs}
\rho^+(u) = \sup_{t\ge 0}\left\{ut - \frac{M}{p}t_+^{\frac{p}2}\right\} = \frac{(p-2)M}{2p}\left(\frac{2u_+}{M}\right)^{\frac{p}{p-2}},
\end{equation}
showing that Assumption~\ref{assump:three} also holds. Now, the \textit{Lagrange dual} problem is given as in \eqref{L_dual_gen} with the $\rho^+$ in \eqref{rhoplus_prs}.
Furthermore, according to \eqref{rw_dual0} and \eqref{kt10}, the \textit{RW-dual} for $p$RS is the following maximization problem:
\begin{equation}\label{rw_dual}
\sup_t\ \widetilde k(t) - t,
\end{equation}
with
\begin{equation}\label{kt1}
\widetilde k(t) = \inf_{\gamma \ge 0}\left\{\frac{M}{p}(\gamma-1)_+^\frac{p}2 + \gamma \lambda_{\min}\left(\begin{bmatrix}
t& g^T\\g& H
\end{bmatrix}\right)\right\}.
\end{equation}
By Theorem~\ref{thm:rwdual}, the optimal value of the RW-dual defined in
\eqref{rw_dual} above is also $v_{_{p{\rm RS}}}$, and the supremum is attained.

%\subsubsection{RW algorithm for $p$RS}\label{sssec:RWA2prs}

In the {\bf RW}$_\rho$ algorithm, we use \eqref{t_bd0} to initialize the algorithm and compute the function value $\widehat k$ in each iteration. We now discuss how these can be done efficiently when specialized to $p$RS in \eqref{Prob_prs}. Indeed, for any $\alpha\ge 0$ satisfying $\rho(\alpha^2)\le 2\max\{2\|g\|\alpha,\|H\|\alpha^2\}$, we have
\[
\frac{M}{p}\alpha^p\le 2\max\{2\|g\|\alpha, \|H\|\alpha^2\}.
\]
This implies $\alpha\le \max\left\{\left(\frac{4p\|g\|}{M}\right)^\frac{1}{p-1},\left(\frac{2p\|H\|}{M}\right)^\frac1{p-2}\right\}$.
It hence follows that
\[
\tau\le \max\left\{\left(\frac{4p\|g\|}{M}\right)^\frac{1}{p-1},\left(\frac{2p\|H\|}{M}\right)^\frac1{p-2}\right\}.
\]
Similarly, for any $\beta > \max\{\lambda_{\min}(H),0\}$ satisfying $\beta\sqrt{(\rho^+)'(\beta - \lambda_{\min}(H))} \le \|g\|$, we have
\[
\|g\|\ge \beta \left(\frac{2(\beta - \lambda_{\min}(H))}{M}\right)^\frac1{p-2},
\]
which is the same as $2\beta^{p-1}-2\lambda_{\min}(H)\beta^{p-2} \le M\|g\|^{p-2}$. Let $\omega(\beta) := 2\beta^{p-1}-2\lambda_{\min}(H)\beta^{p-2} - M\|g\|^{p-2}$. Then one can check that $\omega(\beta) = 0$ has a unique root $\bar\beta$ satisfying $\bar \beta > \max\{\lambda_{\min}(H),0\}$. Moreover, for a nonnegative $\beta$, it holds that $\omega(\beta)\le 0$ if and only if $\beta\le \bar\beta$.
Thus, it follows that $\kappa\le \bar{\beta}$. Now we give an easily computable upper bound $\widehat\beta$ of $\bar\beta$ as follows:
\begin{itemize}
  \item When $ \lambda_{\min}(H)>0 $, we can deduce from $ 2\bar{\beta}^{p-2}(\bar{\beta}-\lambda_{\min}(H))=M\|g\|^{p-2} $ that $ 2[\lambda_{\min}(H)]^{p-2}(\bar{\beta}-\lambda_{\min}(H))<  M\|g\|^{p-2} $. It means that we can choose $ \widehat\beta = \frac{M\|g\|^{p-2}}{2[\lambda_{\min}(H)]^{p-2}} + \lambda_{\min}(H) $.
  \item When $ \lambda_{\min}(H)\leq 0 $, we have from $ \omega(\bar{\beta})=0 $ that $ 2\bar{\beta}^{p-1}\leq M\|g\|^{p-2} $. It means that we can choose $ \widehat\beta =  \left(\frac{M}{2\|g\|}\right)^{\frac1{p-1}}\|g\| $.
\end{itemize}
Hence, it holds that
\[
\kappa\le \widehat \beta := \begin{cases}
\frac{M}2 \left(\frac{\|g\|}{\lambda_{\min}(H)}\right)^{p-2} + \lambda_{\min}(H) & {\rm if }\ \lambda_{\min}(H) > 0,\\
\left(\frac{M}{2\|g\|}\right)^{\frac1{p-1}}\|g\| & {\rm otherwise}.
\end{cases}
\]

Consequently, we have from \eqref{t_bd0} and the above upper bounds for $\tau$ and $\kappa$ that if $t^*$ is a maximizer of \eqref{rw_dual} such that $\widetilde k$ is differentiable at $t^*$, then
\begin{equation}\label{t_bd}
\begin{aligned}
&\lambda_{\min}(H)-\widehat{\beta} \le t^*\\
&\le \min\left\{\bar t,\lambda_{\min}(H) + \|g\| \max\left\{\left(\frac{4p\|g\|}{M}\right)^\frac{1}{p-1},\left(\frac{2p\|H\|}{M}\right)^\frac1{p-2}\right\} \right\}.
\end{aligned}
\end{equation}
This bound can be used in place of \eqref{t_bd0} in {\bf RW}$_\rho$ when solving $p$RS.

Finally, as for evaluating $\widetilde k$ in \eqref{kt1} in each iteration,
one has to compute at where the infimum in \eqref{kt1} is attained. This
can be done by specializing Theorem~\ref{thm1} item \eqref{item:itheorem} to $p$RS: the set of minimizers for the infimum in \eqref{kt1} is given by
\[
\begin{cases}
\{0\} & {\rm if}\ \lambda(t) > 0,\\
[0,1] & {\rm if}\ \lambda(t) = 0,\\
\left\{\left(-\frac{2\lambda(t)}{M}\right)^\frac{2}{p-2} + 1\right\} & {\rm otherwise}.
\end{cases}
\]

\subsection{Combining trust region constraint and $p$-regularization}\label{subsec:stp}
We consider the following regularized problem, which combines \eqref{Prob_prs} and \eqref{Prob_trs}:
\begin{equation}\label{Prob_stp}
\begin{array}{lc}
(p{\rm TRS})&
\begin{array}{crl}
v_{_{p{\rm TRS}}} :=& \min\limits_x & 2g^Tx+x^THx+\frac{M}{p}\|x\|^p\\
&{\rm s.t.} & \|x\|^2\leq s,
\end{array}
\end{array}
\end{equation}
where $H\in {\cal S}^n$, $g\in \R^n\backslash\{0\}$, $ p>2 $,  $M > 0$ and $ s>0 $.
This is a special case of \eqref{Prob_general} with
\begin{equation}\label{phi}
\rho(t) =\frac{M}{p}t_+^{\frac{p}2}+\delta_{(-\infty,s]}(t).
\end{equation}
With the definition of monotone conjugate, one can compute that
\begin{equation}\label{phiplus}
\begin{aligned}
\rho^+(u) &:= \sup\limits_{t\geq 0}\left\{ut-\frac{M}{p}t_+^{\frac{p}2}-\delta_{(-\infty,s]}(t)\right\}\\
& =
\begin{cases}
 \frac{(p-2)M}{2p}\left(\frac{2u_+}{M}\right)^{\frac{p}{p-2}} &{\rm if}\ u\leq \frac{M}2s^{\frac{p-2}2},\\
us-\frac{M}{p}s^{\frac p 2}& {\rm otherwise}.
\end{cases}
\end{aligned}
\end{equation}
It is now routine to check that the $\rho$ in \eqref{phi} satisfies Assumptions~\ref{assump:one}-\ref{assump:three}. The \textit{Lagrange dual} problem for \eqref{Prob_stp} is given as in \eqref{L_dual_gen} with the $\rho^+$ in \eqref{phiplus}. Furthermore, the \textit{RW-dual} for \eqref{Prob_stp} in reference to \eqref{rw_dual0} and \eqref{kt10} is
\begin{equation}\label{rw_dual_stp}
\sup\limits_{t} \  \breve k(t)-t
\end{equation}
where
\begin{equation}\label{kt_stp}
\breve k(t) = \inf\limits_{0\leq\gamma\leq s+1} \left\{\frac{M}{p}(\gamma-1)_+^{\frac p 2}+ \gamma \lambda_{\min}\left(\begin{bmatrix}
t& g^T\\g& H
\end{bmatrix}\right)\right\}
\end{equation}
By Theorem~\ref{thm:rwdual}, the optimal value in \eqref{rw_dual_stp} is also $v_{_{p{\rm TRS}}}$ and the supremum is attained.

Now, we estimate upper bounds for the quantities $ \tau $ and $ \kappa $ in \eqref{tau} and \eqref{kappa} respectively, which are used in $ {\rm RW}_\rho $ algorithm for solving \eqref{Prob_stp}. According to the definition of $ \tau $, we consider those nonnegative $ \alpha $'s satisfying $ \rho(\alpha^2)\leq2\max\{2\|g\|\alpha,\|H\|\alpha\} $. Using the definition of $\rho$ in \eqref{phi}, we obtain
\[
0\leq \alpha \leq \sqrt{s} \ {\rm and}\ \frac{M}{p}\alpha^p\le 2\max\{2\|g\|\alpha, \|H\|\alpha^2\}.
\]
This implies $ \alpha \leq \min\left\{\sqrt{s}, \ \max\left\{\left(\frac{4p\|g\|}{M}\right)^\frac{1}{p-1},\left(\frac{2p\|H\|}{M}\right)^\frac1{p-2}\right\}\right\} $.
Thus, we have
\[
\tau \leq  \widetilde\alpha:=\min\left\{\sqrt{s}, \ \max\left\{\left(\frac{4p\|g\|}{M}\right)^\frac{1}{p-1},\left(\frac{2p\|H\|}{M}\right)^\frac1{p-2}\right\}\right\}.
\]

Next, to upper estimate $ \kappa $, it suffices to find a $\widetilde\beta\ge 0$ so that
\[
\widetilde{\beta}\ge \sup\left\{\beta > 0:\;\beta>\lambda_{\min}(H),\  \beta\sqrt{(\rho^+)^\prime(\beta-\lambda_{\min}(H))}\leq \|g\|\right\},
\]
where
\begin{equation}\label{drhoplus_stp}
 (\rho^+)^\prime(u) = \begin{cases}
 \left(\frac{2u_+}{M}\right)^\frac2{p-2} & {\rm if}\  u\leq \frac M 2 s^{\frac{p-2}2},\\
 s & {\rm otherwise}
 \end{cases}
\end{equation}
To this end, we start by noting that if $\beta-\lambda_{\min}(H)>\frac{M}2s^{\frac{p-2}2}$ and $\beta > 0$, then we have from $\beta\sqrt{(\rho^+)^\prime(\beta-\lambda_{\min}(H))}\leq \|g\|$ that $\beta\le \frac{\|g\|}{\sqrt{s}}$. Otherwise, we have $\beta-\lambda_{\min}(H)\le\frac{M}2s^{\frac{p-2}2}$. Thus, one can choose $\widetilde\beta$ as
\[
\widetilde \beta := \max\left\{\frac{\|g\|}{\sqrt{s}}, \lambda_{\min}(H) + \frac{M}2s^{\frac{p-2}2}\right\}.
\]
Then it holds that $\kappa \le \widetilde \beta$.

Now, we have from Theorem~\ref{thm2} and the upper bounds for $ \tau $ and $ \kappa $ that, if $ t^* $ is a maximizer of \eqref{rw_dual_stp} such that $\breve k$  is differentiable, then
\begin{equation}\label{t_bd_stp}
\lambda_{\min}(H)-\widetilde{\beta} \leq t^* \leq \min\left\{\bar t, \lambda_{\min}(H)+\widetilde{\alpha}\|g\| \right\}.
\end{equation}
Finally, we turn to the infimum in \eqref{kt_stp} that will be used for evaluating $ \breve k $ in each iteration. Thanks to Theorem~\ref{thm1} item (i) and \eqref{drhoplus_stp}, we obtain the set of $\gamma$ that attains the infimum in \eqref{kt_stp} as follows,
 \[
 \begin{cases}
 \{0\} & {\rm if }\ \lambda(t)>0,\\
 [0, 1] & {\rm if }\ \lambda(t) = 0,\\
 \left\{\left(-\frac{2\lambda(t)}{M}\right)^\frac{2}{p-2} + 1\right\} & {\rm if }\ -\frac M 2 s^{\frac{p-2}2} \leq \lambda(t) <0,\\
 \{s+1\} & {\rm otherwise}.
 \end{cases}
 \]

\subsection{Concrete examples of $\rho$ verifying Assumption~\ref{assump:three}}\label{subsec:rho}

We present in the next proposition a sufficient condition for a
proper closed convex function $\rho$ satisfying
Assumptions~\ref{assump:one} and \ref{assump:two} to also satisfy Assumption~\ref{assump:three}. For convenience, we denote $ {\frak D} :=\{t\in{\rm dom}\,\rho :\; \rho(t)>0 \} $.

\begin{prop}[Sufficient condition for Assumption~\ref{assump:three}]
  Let $\rho$ be a proper closed convex function satisfying
Assumptions~\ref{assump:one} and \ref{assump:two}. If $ \rho $ is strictly convex on $ \frak D $, then $\rho^+$ is differentiable on $(0,\infty)$.
\end{prop}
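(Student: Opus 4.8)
The plan is to reduce differentiability of $\rho^+$ on $(0,\infty)$ to a uniqueness statement for the maximizers of the supremum defining $\rho^+$, and then to extract that uniqueness from the strict convexity of $\rho$ on ${\frak D}$. First, recall that $\rho^+$ is finite everywhere under Assumption~\ref{assump:two}, so ${\rm dom}\,\rho^+=\R$ and every $u>0$ lies in ${\rm int}({\rm dom}\,\rho^+)$; by \cite[Theorem~25.1]{roc70}, $\rho^+$ is differentiable at such a $u$ precisely when $\partial\rho^+(u)$ is a singleton. Since $\rho^+=(\rho+\delta_{\R_+})^*$ with $\rho+\delta_{\R_+}$ proper closed convex (here $\rho(0)=0$ from Assumption~\ref{assump:one} is used for properness), the conjugate subdifferential formula gives $\partial\rho^+(u)=\argmax_{t\ge0}\{ut-\rho(t)\}=:S(u)$, a nonempty convex set. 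So it suffices to prove that $S(u)$ is a singleton for every $u>0$.

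Next I would introduce the threshold $\bar t:=\sup\{t:\rho(t)=0\}$. Using that $\rho$ is nonnegative, nondecreasing, closed, and not identically $0$ (the last point following from supercoercivity in Assumption~\ref{assump:two}), one checks that $\bar t\in[0,\infty)$, that $\rho(t)=0$ on $(-\infty,\bar t]$ and $\rho(t)>0$ on $(\bar t,\infty)$, and hence that ${\frak D}=(\bar t,\infty)\cap{\rm dom}\,\rho$ is an interval. Fixing $u>0$ and writing $\phi(t):=ut-\rho(t)$, the strict monotonicity of $t\mapsto ut$ on $(-\infty,\bar t]$ shows $\phi(t)<\phi(\bar t)$ whenever $t<\bar t$, so every element of $S(u)$ is at least $\bar t$.

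Finally, suppose towards a contradiction that $S(u)$ contains two distinct points $t_1<t_2$. Convexity of $S(u)$ and of ${\rm dom}\,\rho$ puts the whole segment $[t_1,t_2]$ inside $S(u)\cap{\rm dom}\,\rho$, and the previous step gives $t_1\ge\bar t$, hence $t_2>\bar t$; consequently $J:=[t_1,t_2]\cap(\bar t,\infty)$ is a nondegenerate subinterval of ${\frak D}$ on which $\phi$ is constant (equal to $\rho^+(u)$). But strict convexity of $\rho$ on ${\frak D}$ forces $\phi$ to be strictly concave on $J$, which is incompatible with $\phi$ being constant on a nondegenerate interval. This contradiction shows that $S(u)=\partial\rho^+(u)$ is a singleton for all $u>0$, completing the proof.

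The genuinely routine parts are the identity $\rho^+=(\rho+\delta_{\R_+})^*$ and the argmax description of its subdifferential; I do not foresee a real obstacle, the only thing deserving care being the bookkeeping at $\bar t$ — specifically ruling out $\bar t=+\infty$ via supercoercivity and using closedness of $\rho$ to conclude $\rho(\bar t)=0$ — which is exactly what guarantees that the interval $J$ appearing in the last step sits inside ${\frak D}$, where strict convexity is available.
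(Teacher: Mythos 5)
Your proof is correct, and while it rests on the same underlying idea as the paper's --- for $u>0$ the supremum defining $\rho^+(u)$ is attained only where $\rho$ is strictly convex, so strict convexity forces a unique maximizer and hence a singleton subdifferential --- the execution is genuinely different. The paper first disposes of the case ${\frak D}=\emptyset$ by an explicit computation (there $\rho$ is the indicator of an interval $(-\infty,\eta]$), and otherwise rewrites $\rho^+=(\rho+\delta_{[a,\infty)})^*$ on $\R_+$ with $a=\inf{\frak D}$, checks that $\rho+\delta_{[a,\infty)}$ is strictly convex, and invokes the strict-convexity/essential-smoothness conjugate duality of \cite[Theorem~4.2.5]{BoLe:00}. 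You instead prove the relevant instance of that duality by hand: you combine $\partial\rho^+(u)=\argmax_{t\ge0}\{ut-\rho(t)\}$ with the criterion that a finite convex function is differentiable exactly where its subdifferential is a singleton, locate all maximizers in $[\bar t,\infty)$ with $\bar t=\sup\{t:\rho(t)=0\}$, and derive a contradiction from two distinct maximizers, since the objective would then be constant on a nondegenerate subinterval of ${\frak D}$ on which it is strictly concave. Your route is more elementary and absorbs the ${\frak D}=\emptyset$ case automatically (a nondegenerate interval cannot sit inside the empty set), whereas the paper's is shorter once the cited duality theorem is granted. The bookkeeping you flagged --- $\bar t<\infty$ from supercoercivity, $\rho(\bar t)=0$ from closedness, and the fact that maximizers lie in ${\rm dom}\,\rho$ so that $[t_1,t_2]\cap(\bar t,\infty)\subseteq{\frak D}$ --- is all handled correctly.
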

\begin{proof}
Since $ \rho $ is proper closed convex and satisfies Assumption~\ref{assump:one}, the set $ \frak D $ is convex. We consider two cases: (i) $ \frak D = \emptyset $;  (ii) $ \frak D  $ is a nonempty convex set.

In case (i), the value of $ \rho $ is either 0 or $ \infty $. According to Assumptions~\ref{assump:one} and \ref{assump:two}, we have $ \rho(0) = 0 $, $\rho(t_0) <\infty$ for some $t_0 > 0$, and $ \rho(t)\rightarrow\infty $ as $ t\rightarrow \infty $, it means that there exists $ \eta>0 $ such that $ \rho(t) = \delta_{(-\infty,\eta]} $. One can then compute
\[
\rho^+(u) = \sup\limits_{0\leq t\leq \eta} tu =  \begin{cases}
\eta u & {\rm if }\ u\geq 0, \\
0 & {\rm otherwise}.
\end{cases}
\]
So the claim holds trivially for this case.

We now consider case (ii). Let $ a := \inf\{ t: t\in\frak D \} $. Then we have $ a\in[0, \infty) $, $\rho(t) \in (0,\infty]$ whenever $t > a$, and $\rho(t) = 0$ when $t < a$, thanks to Assumption~\ref{assump:one}. In addition, since ${\frak D}\neq \emptyset$, we conclude further that $a\in {\rm dom}\,\rho$. Using these and the fact that $\rho$ is closed and convex, we see from \cite[Proposition~2.1.6]{Za02} that $\rho(a) = 0$. This together with ${\frak D}\neq \emptyset$ implies that $a\in {\rm int}({\rm dom}\,\rho)$.

Since $\rho(t)=0$ whenever $t \in [0,a]$, one can see that $ a \in \mathop{\rm argmax}\limits_{0\leq t\leq a}\{ut-\rho(t)\} $ for $u\in \R_+$. Therefore, for any $ u\in\R_+ $, we have
\begin{equation}\label{gjr0}
\begin{aligned}
\rho^+(u) &= \sup\limits_{t\geq 0} \{ut-\rho(t)\} = \sup\limits_{t\geq a} \{ut-\rho(t)\} \\
&= \sup\limits_{t}\{ut-(\rho+\delta_{[a,\infty)})(t)\} = (\rho+\delta_{[a,\infty)})^*(u),
\end{aligned}
\end{equation}
which together with the fact $ {\rm dom}\, \rho^+ = \R $ (consequence of Assumption~\ref{assump:two}) implies $\R_+ \subseteq {\rm dom}\,(\rho+\delta_{[a,\infty)})^*  $. So we have
\begin{equation}\label{gjr1}
	(0,\infty) \subseteq {\rm int}\, {\rm dom}(\rho+\delta_{[a,\infty)})^*.
\end{equation}
Since $ \rho $ is strictly convex on $ \frak D $ and $ {\rm int}\, {\rm dom}\, (\rho+\delta_{[a,\infty)}) \subseteq \frak D $, the function $ \rho+\delta_{[a,\infty)} $ is strictly convex. In view of \cite[Theorem 4.2.5]{BoLe:00}, one can then deduce that $ (\rho+\delta_{[a,\infty)})^* $ is essentially smooth. Thus, $ (\rho+\delta_{[a,\infty)})^* $ is differentiable throughout $ {\rm int}\, {\rm dom}\,(\rho+\delta_{[a,\infty)})^* $. Then we know from \eqref{gjr0} and \eqref{gjr1} that $ \rho^+ $ is differentiable on $ (0,\infty) $.
%
%
%On the other hand, suppose $\rho^+$ is differentiable on $(0,\infty)$. It means that $ \rho^+ + \delta_{\R_+} $ is essentially smooth. Note that, for any $ t\in\R $, we have
%\[
%(\rho^++ \delta_{\R_+})^*(t) = ([\rho + \delta_{\R_+}]^*+ \delta_{\R_+})^*(t)\\
% = [\rho + \delta_{\R_+}]\square \delta_{\R_-}(t) = \inf_y \{\rho(y):\; y \ge 0, t\le y\} = \rho(t).
%\]
%Using \cite[Theorem 4.2.5]{BoLe:00} again, we see that $ \rho $ is essentially strictly convex. Since the convex set $ \frak D\subseteq {\rm dom}\, \partial \rho $, then $ \rho $ is strictly convex on $ \frak D $.
This completes the proof. \qed
\end{proof}

\section{Numerical experiments on generic instances for $p$RS and $p$TRS}\label{sec4}
In this section, we present numerical experiments to compare our algorithm against the standard approach of using Newton's method (with backtracking) to solve the Lagrange dual problems of $p$RS \eqref{Prob_prs} and $p$TRS \eqref{Prob_stp}. In the case when $p=3$ in $p$RS, i.e., the cubic-regularization subproblem, we also consider the recently proposed generalized eigenvalue based approach (GEP) in \cite[Algorithm~A3]{Lieder19}. All numerical experiments are performed in MATLAB 2019b on a 64-bit PC with an Intel Core i7-6700 CPU (3.40GHz) and 32GB of RAM.
Specifically, we compare the following algorithms:

\begin{enumerate}[{\rm (i)}]
  \item {\bf GEP}: This solves $p$RS \eqref{Prob_prs} for $p = 3$ (i.e., the cubic-regularization subproblem) by computing the largest real generalized eigenvalue $ \lambda^*\geq 0 $ and the corresponding eigenvector $ v $ of a certain generalized eigenvalue problem (see \cite[Algorithm~A2, Step~2]{Lieder19}), which can be transformed to a standard eigenvalue problem (\cite[Equation~(18)]{Lieder19} with $ \sigma = M/2 $).
  In our implementation of \cite[Algorithm~A3]{Lieder19}, we solve the eigenvalue problem involved by the MATLAB command {\sf eigs} with default tolerance. We also set the tolerance $ \delta = 10^{-5} $ in \cite[Algorithm~A3]{Lieder19}. Furthermore, we skip the optional steps in Step 4 of \cite[Algorithm~A3]{Lieder19}, and for the {\sf minresQLP} algorithm\footnote{https://web.stanford.edu/group/SOL/software/minresqlp/} used in Step~4a) there, we use the default settings.
  %According to \cite[Theorem~2.1]{Lieder19}, a solution of the cubic-regularization subproblem is given by $ x^* = \frac{v_2}{v_1} $ when $ v_1\neq 0 $. Moreover, if $ v_1 = 0 $, a solution of the cubic-regularization subproblem can be obtained by setting $ x^* = d+ \frac{v_4}{\|v_4\|}\sqrt{\left(\frac{2\lambda_*}{M}\right)^2-\|d\|^2} $ with $ d = -(H+\lambda_* I)^\dagger g $.

  \item {\bf Newton$ _\rho $}: This uses the Newton's method\footnote{Note that the function $ \rho^+ $ with respect to $p$TRS in \eqref{phiplus} is not twice differentiable at $ \widehat u = \frac{M}{2}s^{\frac{p-2}2} $. In the implementation of Newton's method for solving \eqref{Prob_stp}, we use $\frac{4}{M(p-2)}s^{\frac{4-p}{2}} $ as ``Hessian" at $\widehat u$.} with (Armijo) line search for solving the Lagrange dual problems of \eqref{Prob_prs} or \eqref{Prob_stp}. The specific form of these two Lagrange dual problems are described in Sections~\ref{subsec:prs} and \ref{subsec:stp} respectively. As a heuristic, we initialize the algorithm at $ \lambda = \min\{0, \lambda_{\min}(H)\}-1 $. The $\lambda_{\min}(H)$ is computed using the MATLAB command {\sf eigs} with {\sf opts.issym = 1}, {\sf opts.maxit = 5000}, {\sf opts.v0 = sum(H)'}, {\sf opts.fail = 'keep'} and {\sf opts.tol = 1e-8}. The quantity $ (H-\lambda I)^{-1}v $ that appears in the gradient and the Hessian computation is obtained via the MATLAB command {\sf pcg}, using default tolerance and a maximum number of iterations of {\sf 5000}; we also warm-start using approximate solution from the previous iteration. A simple backtracking strategy is then applied to ensure sufficient ascent of the objectives of the dual problems and the positive definiteness of $ H-\lambda I $. We terminate the algorithm when
  \[
  \frac{|f(x) - \widetilde d(\lambda)|}{|f(x)| + 1} < 10^{-9} \ {\rm with}\ x = {\rm Proj}_{\left\{w:\; \|w\|^2\in {\rm dom}\,\rho\right\}}\left(-(H-\lambda I)^{-1}g\right),\footnote{Note that for $p$RS, ${\rm dom}\,\rho = \R$, while for $p$TRS, ${\rm dom}\,\rho = (-\infty,s]$, we see that in both cases the set $\left\{x:\; \|x\|^2\in {\rm dom}\,\rho\right\}$ is closed convex and nonempty. Thus, the projection onto this set exists and is unique.}
  \]
  or the stepsize falls below $ 10^{-10} $ or the number of iterations reaches 10; here $\widetilde d$ is the objective function of \eqref{L_dual_gen}.; in addition, we also terminate if $ |\lambda -\lambda_{\min}(H)|<10^{-10} $ and $ \frac{\|x\|-\sqrt{(\rho^+)^\prime_+(-\lambda_{\min}(H))}}{\sqrt{(\rho^+)^\prime_+(-\lambda_{\min}(H))} + 1} \le 10^{-10}$ --- This latter case suggests the hard case 2 likely occurs. In this latter case, we output $ x + \alpha v $ as an approximate solution, where $ \alpha\in\R $ and $ v\in {\rm ker}(H-\lambda_{\min}(H) I)\setminus\{0\} $ satisfy $ \|x+\alpha v\|^2 = (\rho^+)^\prime_+(-\lambda_{\min}(H)) $.

  \item {\bf RW$ _\rho $}: This is an implementation of {\bf RW}$_\rho$ for $p$RS and $p$TRS. To perform the case check, we first compute $(\lambda_{\min}(H), v^*)$ using {\sf eigs} with the same settings as in {\bf Newton$_\rho $}. If $\lambda_{\min}(H) > 0$, we return zero as the solution. On the other hand, if $\lambda_{\min}(H)\le 0$ and $|{v^*}^Tg| < 10^{-8}\|g\|$, we apply {\sf minresQLP} algorithm with {\sf rtol = 1e-10} to obtain approximately $-(H - \lambda_{\min}(H)I)^\dagger g$; if the residual\footnote{We use the output {\sf relres} from {\sf minresQLP}.} is small ($< 10^{-8}$), we further check whether we are in {\em hard case 2}; we compute an explicit solution as described in \eqref{xhardcase} when {\em hard case 2} happens.

      Otherwise, we initialize the algorithm using the intervals given in \eqref{t_bd} and \eqref{t_bd_stp} for $p$RS and $p$TRS respectively, and terminate when
   \[
   \frac{|f(x) - (\widehat k(t) - t)|}{|f(x)| + 1} < 10^{-12} \ {\rm with}\ x = \frac{\widetilde x\sqrt{(\rho^+)^\prime(-\lambda(t))}}{\|\widetilde x\|}
   \]
  or when the width of the interval $I:= [a,b]$ is too small, i.e., $\frac{|b-a|}{|a| + |b|} < 10^{-12}$; here, $\widehat k$ is given in \eqref{kt10}, and $ \rho^+ $ is given in \eqref{rhoplus_prs} and \eqref{phiplus} respectively for $p$RS and $p$TRS, and $\widetilde x = \frac{1}{v_0}\bar v$ with $\begin{bmatrix}
     v_0& \bar v^T
   \end{bmatrix}^T$ being a unit eigenvector corresponding to the
$\lambda(t)$ (see \eqref{define}) at the current iteration (this is motivated by the formula
for $x^*$ in Theorem~\ref{thm2}). The $\|H\|$ in \eqref{t_bd} and \eqref{t_bd_stp} is computed by using the MATLAB command {\sf normest} with a tolerance $ 10^{-2} $. Moreover, for computational efficiency, the eigenvalue $\lambda(t)$ is computed using {\sf eigs} with a tolerance $10^{-8} $,
and we also warm-start from an eigenvector obtained from a previous iteration at which the slope of $w\mapsto \widehat k(w)-w$ has the same sign as that at $t$.\footnote{For the first iteration, we initialize {\sf eigs} at $[1\ v^*]^T$, where $v^*$ is an eigenvector of $H$ corresponding to $\lambda_{\min}(H)$ obtained during the case check in Step 1.} %Finally, we set $\alpha = 1$ in {\bf inverse interpolation}.

\end{enumerate}

Below, we present tabulated numerical results on randomly generated instances of \eqref{Prob_prs} with $ p=3$, $p=3.5 $, and instances of \eqref{Prob_stp} with  $ (p,s) = (3, 10) $ respectively. We consider three cases:

\paragraph{Easy case.} These are generic instances. To generate such instances, we randomly generate $g\in \R^n$ in \eqref{Prob_prs} or \eqref{Prob_stp} with i.i.d. Gaussian entries, and generate $H$ using the MATLAB command
{\sf H = sprandsym(n,0.005)}. The $M$'s in \eqref{Prob_prs} and \eqref{Prob_stp} are set to be $1.2\|H\|$.

\paragraph{Hard case 1.} This means that the case check in Step 1 is active, but we are not necessarily in hard case 2.
To generate such instances, we randomly generate a sparse $H$ and set $ M $ as in \emph{Easy case}, except that we discard the $H$ if $\lambda_{\min}(H)\ge 0$ and regenerate $H$. We then set the $ g $ in \eqref{Prob_prs} as $g = (H - \lambda_{\min}(H)I)v$ for some $v\in \R^n$ generated according to the MATLAB codes below:
\begin{verbatim}
u = randn(n,1);
v = H*u - lambdamin*u;
v = 1.1*v/norm(v)*(-2/M*lambdamin)^(1/(p-2));
\end{verbatim}
where {\sf lambdamin} is $ \lambda_{\min}(H) $. Similarly, the $ g $ in \eqref{Prob_stp} is set as $g = (H - \lambda_{\min}(H)I)v$  using a $ v $ generated according to the MATLAB codes below:
\begin{verbatim}
u = randn(n,1);
v = H*u - lambdamin*u;
v = 1.1*v/norm(v)*sqrt(dphiplus_stp(-lambdamin,p,s,M));
\end{verbatim}
where {\sf dphiplus\_stp} computes the $ (\rho^+)^\prime $ in \eqref{drhoplus_stp}. Then we are likely in {\em Hard case 1}.

\paragraph{Hard case 2.} This means that the case check in Step 1 is active and the optimal solution $ x^* $ can be obtained explicitly. To generate such instances of \eqref{Prob_prs} and \eqref{Prob_stp}, we randomly generate a sparse $ H $ and set $ M $ as in {\em Hard case 1}. Then we generate $ g $ in a way similar to that in {\em Hard case 1} except that we use a coefficient of 0.9 instead of 1.1 in the MATLAB codes for $ v $.

In our experiments below, we set $n = 25000i$ for $ i= 1,2,3, 4 $. The computational results of $p$RS are shown in Tables~\ref{Table1} and \ref{Table2}, which correspond to $ p=3$, $p=3.5$, respectively. The computational results of $p$TRS are presented in Tables~\ref{Table3}, where  $ (p,s) = (3, 10) $. We report, averaged over $20$ random instances, the CPU time, the number of iterations, and the quantity
\[
{\rm ratio}_i := \frac{f_i - f_{\min}}{|f_{\min}|},
\]
where $f_i$, $i=$ {\bf GEP}, {\bf Newton}$_\rho$ or {\bf RW}$_\rho$, stands for the terminating objective value obtained by each algorithm on a problem instance, and $f_{\min}$ is the smallest value obtained among the competing algorithms on that problem instance. %In Tables~\ref{Table3}, we also report the range of values of feasibility violations (under the columns ${\rm FeasVio}_{{\rm Newton}_\rho}$ and ${\rm FeasVio}_{{\rm RW}_\rho}$) by the solution obtained over the $20$ random instances. Specifically, we define $ \max\{0, \|x\|^2-s\} $ as the feasibility violation, and the two endpoints of the range are the minimal and maximal value among all feasibility violations corresponding to the 20 randomly generated instances.
Comparing {\bf RW}$_\rho$ and {\bf Newton}$_\rho$, {\bf RW}$_\rho$ always gives a better ratio and is always faster than {\bf Newton}$_\rho$ on \textit{Easy case} and \textit{Hard Case 2} instances, but is slower otherwise. Furthermore, for $p$RS with $p = 3$ where {\bf GEP} is applicable, {\bf GEP} outperforms other approaches on \textit{Easy case} and \textit{Hard case 1} instances, but is slower than {\bf RW}$_\rho$ on \textit{Hard Case 2} instances. However, it is not known whether the $\rho$-regularization subproblem \eqref{Prob_general} with general $\rho$ can be solved via a similar approach.

\begin{table}[h]
	\caption{Random tests for $p$RS with $ p=3 $}\label{Table1}
{\scriptsize
			\begin{tabular}{lc rc rc rc}\hline
	 & & \multicolumn{2}{c}{GEP} &  \multicolumn{2}{c}{Newton$_\rho$} & \multicolumn{2}{c}{RW$_\rho$} \\
\cline{3-8}
	~&	$n$ &  CPU  & ${\rm ratio}_{GEP}$  &  CPU(iter)  & ${\rm ratio}_{{\rm Newton}_\rho}$ & CPU(iter)  & ${\rm ratio}_{{\rm RW}_\rho}$  \\
   Easy case  &  25000 &   0.4 & 1.4e-16 &    3.4( 7) & 2.5e-13 &   2.4( 6) & 6.0e-17 \\
              & 50000  &   2.1 & 4.3e-17 &   20.3( 7) & 4.9e-11 &  14.9( 6) & 1.4e-16 \\
              & 75000  &   5.1 & 6.1e-17 &   61.3( 8) & 3.9e-11 &  45.1( 6) & 8.7e-17 \\
              & 100000 &   9.4 & 1.1e-16 &  120.8( 8) & 2.7e-13 &  88.1( 6) & 1.3e-16 \\
Hard case 1   & 25000  &   1.9 & 1.4e-16 &    3.6(10) & 1.1e-12 &   3.8( 8) & 4.5e-17 \\
              & 50000  &   8.7 & 1.7e-16 &   15.1( 2) & 2.0e-12 &  22.4( 8) & 8.0e-17 \\
              & 75000  &  21.4 & 9.1e-17 &   42.4( 2) & 2.0e-12 &  60.2( 8) & 1.7e-16 \\
              & 100000 &  39.2 & 9.0e-17 &   88.3( 2) & 1.2e-12 & 120.5( 8) & 1.5e-16 \\
Hard case 2   & 25000  &  13.7 & 2.6e-10 &    3.8( 8) & 8.0e-12 &   2.7( 0) & 0.0e+00 \\
              & 50000  &  75.3 & 2.6e-10 &   20.7( 8) & 9.2e-12 &  15.6( 0) & 0.0e+00 \\
              & 75000  & 217.6 & 2.6e-10 &   58.0( 8) & 1.0e-11 &  45.8( 0) & 0.0e+00 \\
              & 100000 & 464.1 & 2.7e-10 &  115.9( 8) & 9.3e-12 &  93.8( 0) & 0.0e+00 \\
				\hline
			\end{tabular}
}
\end{table}
\vspace{-2mm}
\begin{table}[h]
	\caption{Random tests for $p$RS with $ p=3.5 $}\label{Table2}
%	\begin{center}
		{\scriptsize
			\begin{tabular}{lc rc rc}\hline
 &  &\multicolumn{2}{c}{Newton$_\rho$} & \multicolumn{2}{c}{RW$_\rho$}   \\
\cline{3-6}
~&	$n$  &  CPU(iter)  & ${\rm ratio}_{{\rm Newton}_\rho}$ & CPU(iter)  & ${\rm ratio}_{{\rm RW}_\rho}$  \\
    Easy case  &  25000  &    3.5( 8) & 1.5e-13 &   2.5( 6) & 0.0e+00 \\
               & 50000   &   21.3( 8) & 8.8e-12 &  15.4( 6) & 0.0e+00 \\
               & 75000   &   57.6( 8) & 1.7e-10 &  41.1( 6) & 0.0e+00 \\
               & 100000  &  118.7( 9) & 3.1e-11 &  84.3( 6) & 0.0e+00 \\
 Hard case 1   & 25000   &    2.3( 2) & 1.7e-12 &   3.9( 8) & 0.0e+00 \\
               & 50000   &   15.0( 2) & 1.5e-12 &  22.3( 8) & 0.0e+00 \\
               & 75000   &   40.3( 2) & 3.1e-12 &  58.2( 8) & 0.0e+00 \\
               & 100000  &   88.0( 2) & 2.6e-12 & 120.5( 8) & 0.0e+00 \\
 Hard case 2   & 25000   &    3.8( 8) & 7.5e-12 &   2.6( 0) & 0.0e+00 \\
               & 50000   &   21.3( 8) & 8.9e-12 &  16.4( 0) & 0.0e+00 \\
               & 75000   &   58.5( 8) & 9.4e-12 &  46.3( 0) & 0.0e+00 \\
               & 100000  &  115.8( 8) & 8.6e-12 &  93.5( 0) & 0.0e+00 \\			
 \hline
			\end{tabular}
		}
%	\end{center}
\end{table}

\begin{table}[h]
	\caption{Random tests for $p$TRS with $ p=3 $ and $ s=10 $}\label{Table3}
%	\begin{center}
		{\scriptsize
			\begin{tabular}{lc rc rc}\hline
 & & \multicolumn{2}{c}{Newton$_\rho$} & \multicolumn{2}{c}{RW$_\rho$}   \\
\cline{3-6}
~&	$n$  &  CPU(iter)  & ${\rm ratio}_{{\rm Newton}_\rho}$ & CPU(iter)  & ${\rm ratio}_{{\rm RW}_\rho}$  \\
   Easy case  &  25000  &    3.7( 8) & 3.9e-13  &   2.6( 6) & 0.0e+00 \\
              & 50000   &   21.4( 8) & 4.3e-12  &  15.3( 6) & 0.0e+00 \\
              & 75000   &   62.3( 8) & 1.6e-11  &  45.1( 6) & 0.0e+00 \\
              & 100000  &  129.2( 9) & 3.0e-13  &  94.2( 6) & 0.0e+00 \\
 Hard case 1  & 25000   &    3.7(10) & 1.1e-12  &   3.9( 8) & 0.0e+00 \\
              & 50000   &   14.8( 2) & 2.0e-12  &  22.3( 8) & 0.0e+00 \\
              & 75000   &   44.0( 2) & 2.0e-12  &  62.0( 8) & 0.0e+00 \\
              & 100000  &   89.7( 2) & 1.2e-12  & 122.8( 8) & 0.0e+00 \\
 Hard case 2  & 25000   &    3.7( 8) & 7.7e-12  &   2.6( 0) & 0.0e+00 \\
              & 50000   &   21.7( 8) & 9.2e-12  &  16.7( 0) & 0.0e+00 \\
              & 75000   &   56.9( 8) & 1.0e-11  &  44.7( 0) & 0.0e+00 \\
              & 100000  &  118.1( 8) & 9.5e-12  &  95.6( 0) & 0.0e+00 \\
				\hline
			\end{tabular}
		}
%	\end{center}
\end{table}

\begin{acknowledgements}
This work was initiated from a discussion between the second author and Henry Wolkowicz on algorithms for the cubic-regularization subproblem during the second author's visit at the University of Waterloo in 2018. The authors gratefully acknowledge the many stimulating suggestions and comments from Henry Wolkowicz during the preparation of this paper.
\end{acknowledgements}


\begin{thebibliography}{99}
\bibitem{Adachi}
S. Adachi, S. Iwata, Y. Nakatsukasa, and A. Takeda.
\newblock{Solving the trust-region subproblem by a generalized eigenvalue problem.}
\newblock{\em SIAM J. Optim.}, 27:269--291, 2017.

\bibitem{BoLe:00}
J.M. Borwein and A.S. Lewis.
\newblock{\em Convex Analysis and Nonlinear Optimization: Theory and Examples.}
\newblock{Springer-Verlag, New York, 2000.}

\bibitem{CarmonDuchi18}
Y. Carmon and J.C. Duchi.
\newblock{Analysis of Krylov subspace solutions of regularized nonconvex quadratic problems.}
\newblock{\em Neural Inf Process Syst.}, 10728--10738, 2018.

\bibitem{MR2776701}
C. Cartis, N.I.M. Gould, and P.L. Toint.
\newblock Adaptive cubic regularisation methods for unconstrained optimization. Part I: motivation, convergence and numerical results.
\newblock {\em Math. Program.}, 127:245--295, 2011.

\bibitem{ConGouToi:00}
A.R. Conn, N.I.M. Gould, and P.L. Toint.
\newblock {\em Trust-Region Methods.}
\newblock {Society for Industrial and Applied Mathematics}, Philadelphia, 2000.

\bibitem{FortinWolk:03}
C. Fortin and H. Wolkowicz.
\newblock{The trust region subproblem and semidefinite programming.}
\newblock{\em Optim. Methods Softw.}, 19:41--67, 2004.

\bibitem{GoLuRoTo}
N.I.M. Gould, S. Lucidi, M. Roma, and P.L. Toint.
\newblock{Solving the trust-region subproblem using the Lanczos method.}
\newblock{\em SIAM J. Optim.}, 9:504--525,1999.

\bibitem{GoRoTho10}
N.I.M. Gould, D.P. Robinson, and H.S. Thorne.
\newblock {On solving trust-region and other regularised subproblems in optimization.}
\newblock {\em Math. Program. Comput.}, 2:21--57, 2010.

\bibitem{GoSi20}
N.I.M. Gould and V. Simoncini.
\newblock {Error estimates for iterative algorithms for minimizing regularized quadratic subproblems.}
\newblock {\em Optim. Methods Softw.}, 35:304--328, 2020.

\bibitem{Griewank81}
A. Griewank.
\newblock {The modification of Newton's method for unconstrained optimization by bounding cubic terms.}
\newblock {Technical report}, NA/12, 1981.

\bibitem{Hager:00}
W.W. Hager.
\newblock {Minimizing a quadratic over a sphere.}
\newblock {\em SIAM J. Optim.}, 12:188--208, 2001.

\bibitem{HL}
J.B. Hiriart-Urruty and C. Lemarechal.
\newblock {\em Fundamentals of Convex Analysis.}
\newblock {Springer-Verlag}, New York, 1993.

\bibitem{HsiaSheu17}
Y. Hsia, R.-L. Sheu, and Y.-X. Yuan.
\newblock{Theory and application of $p$-regularized subproblems for $ p>2 $.}
\newblock{\em Optim. Methods Softw.}, 32:1059--1077, 2017.

\bibitem{Lieder19}
F. Lieder.
\newblock {Solving large scale cubic regularization by a generalized eigenvalue problem.}
\newblock{\em SIAM J. Optim.}, 30:3345--3358, 2020.

\bibitem{MoSo:83}
J.J. Mor\'{e} and D.C. Sorensen.
\newblock{Computing a trust region step.}
\newblock {\em SIAM J. Sci. Statist. Comput.}, 4:553--572, 1983.

\bibitem{MR2229459}
Y. Nesterov and B.T. Polyak.
\newblock Cubic regularization of Newton method and its global performance.
\newblock {\em Math. Program.}, 108: 177--205, 2006.

\bibitem{PongWolk:12}
T.K. Pong and H. Wolkowicz.
\newblock The generalized trust region subproblem.
\newblock {\em Comput. Optim. Appl.}, 58:273--322, 2014.

\bibitem{ReWo:94}
F. Rendl and H. Wolkowicz.
\newblock A semidefinite framework for trust region subproblems with applications to large scale minimization.
\newblock {\em Math. Program.}, 77:273--299, 1997.

\bibitem{roc70}
R.T. Rockafellar.
\newblock {\em Convex Analysis.}
\newblock {Princeton University Press}, 1970.

\bibitem{Sor:99}
M. Rojas, S.A. Santos, and D.C. Sorensen.
\newblock{A new matrix-free algorithm for the large-scale trust-region subproblem.}
\newblock {\em SIAM J. Optim.}, 11:611--646, 2000.

\bibitem{MR2401375}
M. Rojas, S.A. Santos, and D.C. Sorensen.
\newblock{Algorithm 873: LSTRS: MATLAB software for large-scale trust-region subproblems and regularization.}
\newblock {\em ACM Trans. Math. Software}, 34:1--28, 2008.

\bibitem{StWo:93}
R. Stern and H. Wolkowicz.
\newblock {Indefinite trust region subproblems and nonsymmetric eigenvalue perturbations.}
\newblock {\em SIAM J. Optim.}, 5:286--313, 1995.

\bibitem{Za02}
C. Zalines\c{c}u.
\newblock {\em Convex Analysis in General Vector Spaces}.
\newblock World Scientific, 2002.


\end{thebibliography}
\end{document}